\numberwithin{equation}{section}
\newtheorem{theorem}{Theorem}[section]
\newtheorem{assumption}[theorem]{Assumption}
\newtheorem{corollary}[theorem]{Corollary}
\newtheorem{definition}[theorem]{Definition}
\newtheorem{lemma}[theorem]{Lemma}
\newtheorem{proposition}[theorem]{Proposition}
\theoremstyle{remark}
\newtheorem{remark}[theorem]{Remark}
\newtheorem{example}[theorem]{Example}
\newcommand{\R}{\mathbb{R}}
\newcommand{\N}{\mathbb{N}}
\renewcommand{\S}{\mathcal{S}}
\newcommand{\C}{\mathcal{C}}
\newcommand{\converges}[1]{ \overset{#1}{\longrightarrow}} 
\newcommand{\M}{\mathcal{M}}
\newcommand{\x}{\mathbf{x}}
\newcommand{\veps}{\varepsilon}
\newcommand{\eps}{\epsilon}
\DeclareMathOperator{\divergence}{div}
\DeclareMathOperator{\Prob}{\mathbb{P}}
\DeclareMathOperator{\Span}{span}
\DeclareMathOperator*{\esssup}{ess\,sup}
\newcommand{\red}{\color{red}}
\newcommand{\blue}{\color{blue}}
\definecolor{mygreen}{rgb}{0.1,0.75,0.2}
\newcommand{\nc}{\normalcolor}
\newcommand{\Ind}{\Theta}
\newcommand{\Sop}{\Lambda}
\newcommand{\W}{\mathbf{W}}
\newcommand{\D}{\mathbf{D}}
\newcommand{\PN}{\Pi_N}
\title{Geometric structure of graph Laplacian embeddings}
\author{Nicol{\'a}s Garc{\'i}a Trillos}
\address{Department of Statistics, University of Wisconsin-Madison, Madison, WI}
\email{nicolasgarcia@stat.wisc.edu}
\author{Franca Hoffmann \and Bamdad Hosseini}
\address{Computing and Mathematical Sciences, Caltech, Pasadena, CA}
\email{fkoh@caltech.edu, bamdadh@caltech.edu}
\begin{document}
\maketitle

\begin{abstract}
  We analyze the spectral clustering procedure for identifying coarse structure in a data set $\x_1, \dots, \x_n$, and in particular study the geometry of graph Laplacian embeddings which form the basis for
  spectral clustering algorithms. More precisely, we assume that the data is sampled from a mixture model supported on a manifold $\M$ embedded in $\R^d$, and pick a connectivity length-scale $\veps>0$ to construct a kernelized graph Laplacian. We introduce a notion of a well-separated mixture model which only depends on the model itself, and prove that when the model is well separated, with high probability the embedded data set concentrates on cones that are centered around orthogonal vectors. Our results are meaningful in the regime where $\veps = \veps(n)$ is allowed to decay to zero at a slow enough rate as the number of data points grows. This rate depends on the intrinsic dimension of the
 manifold on which the data is supported.
\end{abstract}
\section{Introduction}

The goal of this article is to analyze the  \textit{spectral clustering} procedure for  
identifying coarse structure in a dataset by means of appropriate spectral embeddings. In particular, we combine ideas from spectral geometry, 
metastability, optimal transport, and  spectral analysis of elliptic operators to describe the geometry of the
embeddings that form the basis of spectral clustering in the case where the generative model for the data is sufficiently ``well-separated".

Throughout this article we assume the data set $\M_n= \{ \x_1, \dots, \x_n\}$ consists of points that 
are distributed according to an underlying probability measure $\nu$ supported on 
a manifold $\M$ embedded in some Euclidean space $\R^d$. The first step in 
spectral clustering is to construct a similarity graph $G:=\{ \M_n, \W_n\}$ 
with vertices $\M_n$ and edge weight matrix $\W_n$. The entries in $\W_n$ 
indicate how similar two points in $\M_n$ are; typical constructions are proximity graphs, where a length scale $\veps>0$ is chosen 
and high weights are given to pairs of points that are within distance $\veps$ from each other. Once the graph has been constructed a {\it graph Laplacian} operator is introduced.  The first few eigenvectors of the {graph Laplacian} are then used to define an embedding that maps the data points $\M_n$ into a low-dimensional Euclidean space. More precisely, the low-lying spectrum of the graph Laplacian induces an embedding $F_n : \left\{\x_1, \dots, \x_n \right\} \rightarrow \R^N$ mapping a data point $\x_i$ into a vector $F_n(\x_i)$ in $\R^N$ whose entries are the first $N$ eigenfunctions of the graph Laplacian evaluated at $\x_i$. Here we refer to the map $F_n$ as the {\it Laplacian embedding}.
After the embedding step, an algorithm such as
$k$-means is typically used to identify the meaningful clusters (see \cite{vonLux_tutorial}). While $k$-means is a standard choice in the literature, in the rest of the paper we are agnostic to the algorithm used to cluster the embedded data set. We refer the reader to section \ref{sec:Literature} and in particular to \cite{LingStromer} for alternatives to $k$-means.

How can one motivate the use of the previous spectral clustering algorithm? Consider the simple scenario where the manifold $\M$ has $N$ connected components. Then, given data points uniformly distributed on $\M$ one can construct the proximity graph $G$ (at least for $n$ large enough) in such a way that the associated graph Laplacian also has $N$ connected components and its first $N$ eigenvectors coincide with rescaled versions of the indicator functions of the $N$ connected components of
$\M$ restricted to $\M_n$. The resulting Laplacian embedding map $F_n$ sends the original data set into a set of $N$ orthogonal vectors in $\R^N$. Such geometric structure of the embedded dataset allows one to readily identify the ``true" coarse structure of the original dataset. The key question that we attempt to answer in this work is: what happens in more general settings where in particular $\M$ is not connected? We attempt to answer this question by:
\begin{enumerate}
\item Relaxing the notion of $N$ connected components of $\M$, and replacing it with a notion of well-separated mixture model $\nu $. We highlight that our notion of well-separatedness only depends on the mixture model $\nu$  and not on extrinsic objects like a user chosen kernel in the construction of the graph $G$.
\item Describing the geometry of the Laplacian embedding in terms of what we will call an \textit{orthogonal cone structure}, a generalization of the orthogonality condition of the embedded dataset observed when $\M$ has $N$ connected components. We consider this notion for arbitrary probability measures and not just for point clouds, thus extending the notion introduced in \cite{BinYu}. 
\end{enumerate}
The analysis that we present in this work is divided into two parts. First, we study the geometry of a continuum analogue Laplacian embedding using techniques from
PDEs and the calculus of variations. In the second part, we compare the two embeddings, discrete and continuum. For the second part we use recent results quantifying the approximation error of the spectra of continuum Laplacian operators with the spectra of graph Laplacians. It is only in the second step that we study statistical properties of random objects, and the first part of the paper is, to some extent, of independent interest.

Overall, our ideas, definitions, and results provide new theoretical insights on spectral clustering. They further justify the use of spectral clustering beyond the trivial setting where a manifold is disconnected. In particular,
we show that when the ground-truth measure $\nu$ has well-separated components, the geometry of the embedded data set $F_n(\M_n)$ is simple enough so that an algorithm like $k$-means is successful in correctly identifying the coarse structure of the data set. 

\subsection{Overview of main results}

Let us make our setup more precise. We assume that  $\M$ is a connected manifold and that $\nu$ is a mixture model with $N$ components supported on $\M$ (see Section \ref{Set-up} for detailed assumptions
on this model). We work in a non-parametric setting and assume that the components of $\nu$ have sufficiently regular densities with respect to the volume form of $\M$.  We will primarily work with graph Laplacian operators corresponding to \textit{kernelized} Laplacians (see \eqref{KernelizedWeights} for the definition and Section \ref{sec:differentweights} for some reasons behind this choice, where in particular we discuss that the kernelized Laplacian already covers many other cases of interest).
Before describing the geometry of the embedded data set $F_{n} (\M_n)$ we study a continuum limit analogue of spectral clustering which we refer to as \textit{measure clustering}. This can be thought as an ideal clustering problem where an infinite amount of data is available. Studying this continuum analogue  is motivated by recent results concerning the convergence of graph Laplacian operators to differential operators in the large data limit as
$n$, the number of vertices,  goes to infinity and  $\veps$, the {\it connectivity length scale} specifying
$G$, goes to zero sufficiently slowly.
The low-lying spectrum of the limiting differential operator induces an embedding 
$F: \M \rightarrow \R^N$  in a manner similar to $F_n$, but now at the continuum level. We characterize the geometry of the measure $F_\sharp \nu$ (i.e. the push-forward of $\nu$ through
the continuum limit Laplacian embedding $F$) which forms the basis of the measure clustering
procedure. In particular, we identify conditions on the ground-truth model that ensure that the measure $F_\sharp \nu$ has an orthogonal cone structure. This is the content of
our first key result Theorem \ref{mainTheorem} which is informally stated below.  
\begin{theorem}[Informal]\label{thm:well-separated-gives-cone-structure-continuum}
If the mixture $\nu$ has well-separated components (see Definition~\ref{def:well-separated-mixture}),
then  $F_\sharp\nu$, the push forward
of $\nu$ through the Laplacian embedding $F$, has an orthogonal cone structure (see Definition \ref{def:contcones}). 
\end{theorem}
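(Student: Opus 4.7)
The plan is to sandwich the first $N$ eigenvalues of the continuum weighted Laplacian between a small quantity controlled by the well-separatedness parameter of $\nu$ on one hand, and a component-wise Poincar\'e-type spectral gap on the other. A Davis--Kahan type argument will then identify the first-$N$ eigenfunction subspace with the span of normalized indicators of the components, and a direct computation of $F$ restricted to each component will expose the orthogonal cone structure of $F_\sharp\nu$.

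\emph{Step 1: spectral picture induced by the well-separatedness assumption.} Write $\nu = \sum_{i=1}^{N} w_i\nu_i$ with essentially disjoint supports $A_i \subset \M$, and set $\chi_i := \1_{A_i}/\sqrt{w_i}$, which form an orthonormal family in $L^2(\nu)$. Plugging the $\chi_i$ (and suitable mean-zero combinations thereof) into the Rayleigh quotient of the continuum Laplacian produces $N$ directions of small Dirichlet energy, so the min--max principle forces $\lambda_1,\dots,\lambda_{N-1} \le \delta$ for a $\delta$ that depends only on the intrinsic separation of $\nu$. In the opposite direction, restricting admissible test functions to a single component $A_i$ (where $\nu_i$ has a regular density on the connected set $A_i$) and invoking a standard Poincar\'e inequality gives a lower bound $\lambda_N \ge \mu$. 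The well-separatedness hypothesis is precisely what guarantees $\delta \ll \mu$.

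\emph{Step 2: from spectral closeness to cones.} The gap $\mu - \delta$ feeds a Davis--Kahan-type estimate comparing $\Span\{f_1,\dots,f_N\}$ with $\Span\{\chi_1,\dots,\chi_N\}$ in $L^2(\nu)$. This produces an approximately orthogonal matrix $C = (c_{k,i}) \in \R^{N \times N}$ together with residuals $r_k$ such that
\[
f_k = \sum_{i=1}^N c_{k,i}\chi_i + r_k, \qquad \sum_{k}\|r_k\|_{L^2(\nu)}^2 = O(\delta/\mu).
\]
Setting $v_i := (c_{1,i},\dots,c_{N,i})/\sqrt{w_i} \in \R^N$, the approximate orthogonality of $C$ shows that $v_1,\dots,v_N$ are approximately pairwise orthogonal, and for $\x \in A_i$ one has $F(\x) = v_i + R(\x)$ with $\int_{A_i}|R|^2\,d\nu$ small compared to $|v_i|^2 w_i$. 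Consequently $F_\sharp \nu$ restricted to $F(A_i)$ concentrates near the ray through $v_i$, and joining this with the approximate orthogonality of the $v_i$ yields the orthogonal cone structure in the sense of Definition~\ref{def:contcones}.

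\emph{Main obstacle.} The delicate point will be Step 2. The $L^2$ closeness of the $f_k$ to piecewise constant functions is essentially standard spectral perturbation theory, but the \emph{cone} conclusion requires that the residuals $r_k$ not destroy the geometric picture pointwise. Definition~\ref{def:contcones} is tailored so that an $L^2$ error can be absorbed into the opening angle of the cones, and the bulk of the technical work will consist of quantitatively translating $L^2$ tails on each component into uniform control on cone aperture and orientation, while keeping all constants intrinsic to $\nu$ and independent of the kernel used to define the continuum Laplacian.
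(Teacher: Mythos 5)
Your high-level architecture (small low-lying eigenvalues versus a component-wise spectral gap, then a perturbation argument identifying the first-$N$ eigenspace with a model subspace, then geometry) matches the paper's. But there is a genuine gap at the very first step: you take as model functions the normalized indicators $\chi_i = \1_{A_i}/\sqrt{w_i}$ of ``essentially disjoint supports.'' The paper's notion of well-separatedness (Definition~\ref{def:well-separated-mixture}) only asks that $\S\ll 1$ and $\C/\Ind\ll 1$; it does \emph{not} give you disjoint supports --- in Example~\ref{example:gaussian-mixture} both components have full support on $\R$. Worse, indicator functions of proper subsets of a connected manifold are not in $H^1(\M,\rho)$, so plugging them (or mean-zero combinations of them) into the Rayleigh quotient does not produce ``directions of small Dirichlet energy''; it produces directions of infinite Dirichlet energy, and smoothing them requires geometric information the hypotheses do not supply. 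The missing idea is the correct choice of model functions: the square-root likelihood ratios $q_k=\sqrt{w_k\rho_k/\rho}$ of \eqref{qk-definition}, which are $C^1$ and satisfy the exact identity $\int_\M|\nabla q_k|^2\rho\,dx = w_k\C_k$ --- this identity is precisely why the coupling parameter is the right quantity, and it is what drives both the upper bound on $\lambda_N$ (Proposition~\ref{UppLambdaN}) and the projection estimate $\|q_k-\PN q_k\|_\rho^2\le w_k\C/\lambda_{N+1}$ (Proposition~\ref{ProjectionsError}). Relatedly, your lower bound ``$\lambda_N\ge\mu$ by a standard Poincar\'e inequality on $A_i$'' understates the work: the paper's Proposition~\ref{auxLemmaSigmaB} must transfer orthogonality to $\Span\{q_1,\dots,q_N\}$ in $L^2(\rho)$ into near-orthogonality to $q_j$ in each $L^2(\rho_j)$ (with errors controlled by $\S$), correct the test function, and only then invoke the indivisibility parameter $\Ind_j$ --- which is a Poincar\'e constant for the weighted space $L^2(\rho_j)$, not for a set.

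Your Step 2 also over-claims the geometric conclusion. Because the true model functions $q_k$ are not indicators, the target measure is not concentrated near $N$ points $v_i$; the paper instead shows that $F^Q_\sharp\nu$ (with $F^Q$ the map of normalized membership likelihoods, \eqref{FQ}) has a cone structure directly from the smallness of $\S$ (Proposition~\ref{prop:muQcone}), and then transfers this to $F_\sharp\nu$ by showing the two pushforwards are Wasserstein-close after an orthogonal rotation (via Lemma~\ref{BH-finite-dim-orht-basis}, which plays the role of your approximately orthogonal matrix $C$). Your worry in the ``main obstacle'' paragraph --- that $L^2$ residuals must be upgraded to pointwise control --- is resolved differently and more cheaply than you anticipate: Proposition~\ref{LemmaConeWasserstein} shows cone structures are stable under $W_2$ perturbations, with the perturbation absorbed into the aperture $\sigma+s$, the mass defect $\delta+t^2$, and the inner radius; no uniform control on the residuals is needed. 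If you replace the indicators by the $q_k$ and route the final step through Wasserstein stability rather than pointwise decompositions, your outline becomes essentially the paper's proof.
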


Let us now turn our attention to  the geometry of the embedded point cloud $F_n( \M_n)$.
As mentioned above,
when constructing the graph $G$ from the data $\M_n$, one often scales  $\veps$ with
$n$ in such a way that when $n \rightarrow \infty$, $\veps \to 0$ (i.e. the weight matrix
$\W_n$ localizes) at a slow enough rate.  Analyzing this regime is motivated by the desire to balance between graph connectivity and computational efficiency.  Recent results in the literature provide explicit rates of convergence for the spectra of graph Laplacians towards the spectrum of an elliptic differential operator (see \eqref{ContLaplaciansRho} and Section~\ref{App:A} below) in such regime. Moreover, the convergence of the eigenvectors is with respect to a strong enough topology, implying that the empirical measure of $F_n(\M_n)$ and the measure $F_{\sharp} \nu$ get closer in the
Wasserstein sense as $n$ grows.  In turn, this convergence guarantees that if $F_{\sharp} \nu$ has an orthogonal cone structure, then $F_n(\M_n)$ will have one as well (i.e., orthogonal cone structures are stable under Wasserstein perturbations). In Theorem \ref{PropMuMun} we make the previous discussion mathematically precise. For the moment we provide an informal statement of that result.

\begin{theorem}[Informal]\label{thm:discrete-is-consistent-with-continuum}
Under suitable conditions, if $F_\sharp \nu$
has an orthogonal cone structure then, with very high-probability,
the empirical measure of the embedded cloud $F_n(\M_n)$ also has 
an orthogonal cone structure.
\end{theorem}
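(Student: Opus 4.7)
The plan is to combine two ingredients: a quantitative Wasserstein estimate between the empirical measure $(F_n)_\sharp \nu_n$ of the embedded cloud and the continuum embedded measure $F_\sharp \nu$ (where $\nu_n := \frac{1}{n}\sum_{i=1}^n \delta_{\x_i}$), together with a stability statement showing that the orthogonal cone structure of Definition \ref{def:contcones} is preserved under small Wasserstein perturbations. Theorem \ref{mainTheorem} is then used as a black box to supply the cone structure of $F_\sharp \nu$, and the task reduces to transferring that structure to the discrete side.

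For the first ingredient I would split
\[
W_2\bigl((F_n)_\sharp \nu_n,\, F_\sharp \nu\bigr) \;\le\; W_2\bigl((F_n)_\sharp \nu_n,\, F_\sharp \nu_n\bigr) + W_2\bigl(F_\sharp \nu_n,\, F_\sharp \nu\bigr).
\]
The second summand is controlled by the Lipschitz regularity of $F$ combined with the classical rate of Wasserstein convergence of empirical measures to $\nu$ on the manifold $\M$, which scales with a power of $n$ depending on the intrinsic dimension of $\M$. The first summand is bounded via the canonical coupling by $\|F_n - F\|_{L^2(\nu_n)}$, which is exactly where the spectral-convergence results alluded to in Section \ref{App:A} come in: under the scaling $\veps = \veps(n)$ prescribed in the paper, with high probability the first $N$ eigenvectors of the graph Laplacian converge to the first $N$ eigenfunctions of the continuum operator in a strong enough norm, with an explicit rate in $\veps$ and $n$.

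The main obstacle is the ambiguity in the choice of eigenvectors. Eigenvectors are only defined up to sign, and whenever the continuum spectrum has small gaps within its first $N$ eigenvalues one must work with the full spectral projector onto the first $N$ modes rather than with individual eigenvectors. Accordingly I would compare $O_n F_n$ with $F$ for an appropriate data-dependent orthogonal matrix $O_n \in O(N)$ chosen to (approximately) minimize $\|O_n F_n - F\|_{L^2(\nu_n)}$. Since the orthogonal cone structure is invariant under the action of $O(N)$ on $\R^N$ (the orthonormal axes in Definition \ref{def:contcones} simply get rotated), this alignment step reduces the problem to a Davis--Kahan-type estimate on the spectral projectors of the graph and continuum Laplacians, precisely of the form supplied by the quantitative convergence theorems cited in Section \ref{App:A}.

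Finally I would establish the geometric stability claim: if a probability measure $\mu$ on $\R^N$ has an orthogonal cone structure with given axes, opening angles, and mass parameters, then any probability measure within Wasserstein distance $\delta$ of $\mu$ has an orthogonal cone structure with slightly perturbed parameters, where the perturbation vanishes as $\delta \to 0$. The proof is elementary: a Markov-type argument on an optimal coupling confines all but a small fraction of the displaced mass to a $\sqrt{\delta}$-neighborhood of the original cones, and the remaining mass is absorbed by slightly inflating the opening angles and adjusting the mass parameters. Combining this stability with the Wasserstein estimate above yields the theorem once $n$ is large and $\veps(n)$ is chosen inside the regime for which the spectral-convergence results apply; the anticipated bottleneck of the whole argument is the eigenvector alignment step, since that is what dictates the final probabilistic rate.
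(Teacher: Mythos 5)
Your proposal is correct in architecture and matches the paper's two-ingredient strategy: a quantitative Wasserstein bound between $F_{n\sharp}\nu_n$ and (a rotation of) $F_\sharp\nu$, supplied by the spectral-convergence machinery of Section~\ref{App:A}, followed by the Wasserstein-stability of orthogonal cone structures, which the paper proves exactly as you describe (Markov on an optimal coupling, inflate the opening angle, adjust the mass parameter; see Proposition~\ref{LemmaConeWasserstein}). The alignment issue you flag is also handled the same way in spirit: Theorem~\ref{TheoremTranps} does not compare $u_{n,j}$ to the eigenfunctions $u_j$ individually but produces an orthonormal system $g_1,\dots,g_N$ spanning $U$, i.e.\ the rotation is absorbed into the statement of the spectral estimate, and the $O(N)$-invariance of the cone structure finishes the job.

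Where you diverge is in how the Wasserstein distance is decomposed. You insert the intermediate measure $F_\sharp\nu_n$ and pay twice: once for $W_2(F_\sharp\nu_n,F_\sharp\nu)$, which requires a Lipschitz bound on the continuum eigenfunctions together with an empirical-measure rate for $W_2(\nu_n,\nu)$, and once for $W_2(F_{n\sharp}\nu_n,F_\sharp\nu_n)\le\|F_n-O_nF\|_{L^2(\nu_n)}$, which requires converting the available $L^2(\nu)$ spectral estimate into an $L^2(\nu_n)$ estimate at the data points (again via eigenfunction regularity). The paper avoids both regularity inputs by building a single coupling directly: with $T_n$ the $\infty$-OT map of Proposition~\ref{T-n-definition} and $G=(g_1,\dots,g_N)$, the plan $\pi_n=(G\times F_n)_\sharp(\mathrm{Id}\times T_n)_\sharp\nu$ gives $W_2^2(G_\sharp\nu,F_{n\sharp}\nu_n)\le\sum_j\|g_j-u_{n,j}\circ T_n\|_{L^2(\nu)}^2$, which is precisely the quantity Theorem~\ref{TheoremTranps} controls. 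Your route is viable on a compact manifold with $C^1$ density (the eigenfunctions are indeed Lipschitz there), but the Lipschitz constants of the first $N$ eigenfunctions would then enter your final rate, whereas the paper's rate $\phi(\S,\C,\Theta,N,\veps,n,m)$ depends only on $\lambda_N$, $\lambda_{N+1}$ and the mixture parameters. So: same key lemmas, slightly less economical decomposition; no gap in the logic, only an extra (and quantifiable) regularity hypothesis that the paper's coupling construction renders unnecessary.
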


\subsection{Related work} 

\label{sec:Literature}

In recent years, graph-based methods  have been used extensively in machine learning literature, due to their ability to capture the intrinsic geometry of a data set with relatively simple algorithms. 
Operators and functionals defined on graphs often induce sensible approaches to address fundamental
data analysis tasks such as semi-supervised learning and clustering. 

In this paper we focus  on clustering. More specifically, clustering by means of appropriate embeddings using the spectrum of a graph Laplacian operator. In the context of stochastic block models, large sample properties of the spectra of graph Laplacians have been studied in \cite{Rohe}. In the geometric graph setting (our
setting) \cite{vonLuxburg} was the first to rigorously establish a statistical consistency result for  graph-based spectral embeddings when the number of data points $n$ goes to infinity and the connectivity $\veps>0$ remains constant.  Up until then most papers focused on proving pointwise convergence estimates (see \cite{bel_niy_LB,HeinvonLuxburgAudibert} and references within).  The work \cite{vonLuxburg} however, did not address the question of determining scalings for $\veps:=\veps(n) \rightarrow 0$ that implied spectral convergence towards differential operators in the
continuum limit. Moreover, it was still a pending task to analyze the entire spectral clustering algorithm, where in addition to the embedding step, there is an extra clustering step (typically $k$-means).

In \cite{GarciaTrillosSlepcev2015}, the authors introduced a framework using notions from optimal transport and the calculus of variations to study large data limits of optimization problems defined on random geometric graphs. Many large data limits of relevant functionals for machine learning have been studied using similar approaches \cite{SlepcevThorpe,StuartSlepcevThorpeDunlop,GarciaMurray,GarciaSanzAlonso}. In particular, the optimal transport framework was used in \cite{GarciaTrillosSlepcev2015} to analyze the entire spectral clustering procedure (embedding step and $k$-means step) and the authors were able to show statistical consistency for scalings $ \log(n)^{p_m}/n^{1/m} \ll \veps\ll 1$ where
$m$ is the intrinsic dimension of the manifold $\M$  and $p_m >0$ is
an appropriate constant depending on $m$. Nevertheless, the spectral convergence was asymptotic and no rate was provided until  \cite{Shi2015}. In \cite{BIK}, the authors took a somewhat similar approach to \cite{GTSSpectralClustering}, and in a non-probabilistic setting, conducted a careful analysis which ultimately allowed to provide very precise rates of convergence for the spectrum of graph Laplacians. The work \cite{Moritz} extended this analysis to cover the relevant probabilistic setting where point clouds are samples from a ground truth measure supported on a manifold. The rates in \cite{Moritz} are, to the best of our knowledge, state of the art. Many of the results and
ideas  introduced in \cite{Moritz} will be revisited in Section \ref{App:A}; as a matter of fact, the probabilistic estimates that we need for this paper are proved in a very similar fashion to \cite{Moritz} after making some slight modifications that we will discuss later on. 

All of the publications listed above focus on spectral convergence of graph Laplacians, but do not study the geometry of Laplacian embeddings themselves (i.e. what is the geometry of the embedded data set, and how such geometry may affect the subsequent clustering step). The work \cite{BinYu} formulates this problem in a precise mathematical setting, by introducing a notion of a well-separated data generating mixture model. Roughly speaking, the authors obtain a result that, in spirit, reads similarly to Theorem \ref{thm:discrete-is-consistent-with-continuum}. However, the notion of a well-separated model introduced in that work depends on extrinsic quantities such as the kernel used to construct the graph at the finite data level, and in particular they work in the regime where $n$ is thought to be large, but the connectivity length-scale $\veps$ does not decay to zero. The article \cite{BinYu} has inspired and motivated our work. 

Different scenarios where one can define a notion of well-separated components were studied in \cite{LittleMaggioniMurphy,LingStromer}. Both of these papers are closely related to our work, and we believe are complementary of each other. In both works, the respective notion of ``well-separated components" is used to help validate proposed methodologies for clustering. In \cite{LittleMaggioniMurphy}, the focus is on understanding the stability of spectral embeddings in situations where the original data set has a geometric structure that is highly anisotropic. The authors propose the use of a general family of distances (other than the Euclidean one) to construct graphs (which ultimately induce corresponding spectral embeddings). The model considered in that paper is more geometric in nature than the one we consider here. In \cite{LingStromer}, the authors introduce a notion of well-separated components at the finite data level and use it to validate a new algorithm for clustering which is based on a convex relaxation for a multi-way cut problem and does not require a $k$-means step, making it quite appealing. 

We believe it is worth exploring and understanding the connections of the many different mathematical ideas and models presented in \cite{LittleMaggioniMurphy,LingStromer} with the ones we present here.

\subsection{Outline}

The rest of the paper is divided as follows. Section \ref{Set-up} introduces the precise set-up of the paper and presents the main results. In particular, in Section \ref{sec:wellsepa} we introduce our notion of well-separated mixture models; our two main results, Theorem \ref{mainTheorem} on the geometry of continuum Laplacian embeddings (i.e. the measure clustering embedding) and Theorem \ref{PropMuMun} describing the geometry of graph Laplacian embeddings, are presented in Sections \ref{sec:continuum} and \ref{sec:fromconttodiscr} respectively.

Section \ref{sec:discussion} is devoted to discussing our notion of well-separated mixture models.
We present some illustrative examples, discuss the implications of our results, and finally comment on possible extensions and generalizations.

Proofs of the main results are postponed to Section \ref{sec:Proofs}. In particular, Section \ref{sec:Conic} focuses on stability properties of orthogonal cone structures, allowing us to prove Theorem \ref{mainTheorem} in Section \ref{sec:ProofsMain} after showing several preliminary results. In Section \ref{App:A} we make the connection between the eigenspaces generated by the first $N$ eigenvectors of the graph Laplacian and those of the continuum Laplacian. We conclude with Section \ref{sec:proof2}, where we present the proof of Theorem \ref{PropMuMun}.

\section{Setup and main results}
\label{Set-up}

Let $\mathcal{M}$ be a smooth, connected, orientable,  $m$-dimensional manifold embedded in $\R^d$. For the moment we think of $\M$ as either a compact manifold without boundary or the whole of $\R^d$. Let $\rho_1, \dots, \rho_N$ be a family of $C^1(\M)$ probability density functions on $\M$ (densities with respect to the volume form of $\M$) 
and let  $w_1,\dots, w_N$ be positive weights adding to one. The densities $\rho_1, \dots, \rho_N$ are weighted to produce the \textit{mixture model}
\begin{equation}
\rho(x) := \sum_{k=1}^{N} w_k \rho_k(x) , \quad  x \in \M.
\label{DefRho}
\end{equation}
We assume that $\rho(x)>0$ for all $x \in \M$  and let $\nu$ be the probability measure on $\M$ induced by $\rho$, that is, 
\[d \nu(x) = \rho(x) dx \in \mathcal{P}(\M),\]
where in the above and in the remainder $dx$ denotes integration with respect to $\M$'s volume form
and $\mathcal{P}(\M)$ denotes the space of complete Borel probability measures on $\M$.

Associated to each of the densities $\rho_k$ we introduce a differential operator $\Delta_{\rho_k}$, which for smooth functions $u$ is defined by
\begin{equation}
\Delta_{\rho_k} (u) := -\frac{1}{\rho_k}\divergence(\rho_k \nabla u) = -\Delta u -  \frac{1}{\rho_k} \nabla  \rho_k \cdot \nabla u.
\label{ContLaplacians}
\end{equation}
In the above, $\nabla $ stands for the gradient operator on $\M$, $\divergence $ is the divergence on $\M$, and $\Delta$ is the Laplace-Beltrami operator on $\M$. 
The term $\nabla  \rho_k \cdot \nabla u $ evaluated at a point $x \in \M$, is the inner product of the vectors $\nabla  \rho_k(x), \nabla u (x) $ in the tangent space $\mathcal{T}_x\M$; 
seen as vectors in $\R^d$, $\nabla  \rho_k(x)\cdot \nabla u (x)$ is just their inner product in $\R^d$. Likewise, we consider the operator,
\begin{equation}
\Delta_\rho (u) := -\frac{1}{\rho}\divergence(\rho \nabla u) = -\Delta u - \frac{1}{\rho}\nabla \rho \cdot \nabla u.
\label{ContLaplaciansRho}
\end{equation}
Given a density $\varrho$ on $\M$ we define the weighted function spaces
\begin{equation*}
  L^2(\M, \varrho):= \left\{ u: \M \mapsto \mathbb R \Big| \langle u, u \rangle_{\varrho} < + \infty  \right\},
\end{equation*}
equipped with the weighted inner product
\begin{equation*}
 \langle u, v \rangle_{\varrho} := \int_\M u(x) v(x) \varrho(x) dx.
\end{equation*}
Throughout the article we routinely write $L^2(\varrho)$ instead of $L^2(\M, \varrho)$ when
the domain of the function space is clear from the context. In
section~\ref{App:A} we modify our notation slightly and 
use $L^2(\mu)$ to denote $L^2(\M, \varrho)$ given the measure $\mu = \varrho dx$ and use
$\langle\cdot, \cdot\rangle_{L^2(\mu)}$ to denote $\langle \cdot, \cdot \rangle_\rho$. The reason for choosing the specific form \eqref{ContLaplaciansRho} for the operator $\Delta_\rho$  is justified by its close connection with the kernelized graph Laplacian that we will introduce and discuss in Sections \ref{sec:fromconttodiscr} and \ref{sec:differentweights} below.

We will make the following assumption.
\begin{assumption}
	\label{Assump1}
We assume that the mixture model $\nu$ satisfies the following: $\rho_1, \dots, \rho_N
\in C^1(\M)$ with $\int_\M\rho_k(x)\,dx=1$ for all $k$ are such that
the operators $\Delta_{\rho_k}$ and $\Delta_{\rho}$ have a discrete point spectrum with an associated orthonormal basis of eigenfunctions for $L^2(\rho_k)$  and $L^2(\rho)$ respectively.	
\end{assumption}

%

\subsection{Well-separated mixture models}
\label{sec:wellsepa}

Let us now introduce the notion of a well-separated mixture model. This is a notion defined in terms of three quantities associated to the model $\nu$ that we refer to as \textit{overlapping}, \textit{coupling}, and \textit{indivisibility} parameters.

For $k =1, \dots, N$, define the functions
\begin{equation}\label{qk-definition}
 q_k := \sqrt{\frac{w_k \rho_k}{\rho}}.
\end{equation} 
The \textit{overlapping} parameter of the mixture model $\nu$ is the quantity 
\begin{equation}
\S :=   \max_{i \not = j}  \left \langle \left(\frac{q_i}{\sqrt{w_i}}\right)^2 , \left(\frac{q_j}{\sqrt{w_j}} \right)^2\right\rangle_\rho    =  \max_{i \not = j}   \int_{\M} \frac{\rho_i \rho_j}{\rho} dx,
\label{eqn:S2}
\end{equation}
Note that $\S\in\left(0,\frac{1}{\min_j w_j}\right)$ since for any $k\in\{1,...,N\}$ we have the bound 
$$(\min_j w_j) \rho_k\leq \sum_{j=1}^N w_j \rho_j =\rho\,,$$ 
and so $\frac{\rho_k}{\rho}\leq \frac{1}{\min_jw_j}$.

The \textit{coupling} parameter of the mixture model is defined as
\begin{equation}
 \mathcal{C}:= \max_{k=1, \dots, N} \mathcal{C}_k,
\label{Coupling-parameter}
\end{equation}
where
\begin{equation}
 \mathcal{C}_k:= 
\frac{1}{4} \int_{\M}  \left|\frac{\nabla\rho_k}{\rho_k}-\frac{\nabla\rho}{\rho}\right|^2\rho_k dx ,
\quad k=1, \dots, N.
 \label{Coupling}
\end{equation}
This quantity is the Fisher information of the measure $\rho_kdx$ with respect to the measure $\rho dx$. We use the convention that if the density $\rho_k(x)$ vanishes at some point $x \in \M$, then we interpret the integrand in \eqref{Coupling} as zero at that point; in other words we only integrate over the set $\{ x \in \M \: : \: \rho_k(x) >0 \}$.

Finally, the \textit{indivisibility} parameter for the model is defined as
\begin{equation}\label{Lambda-definition}
\Ind := \min_{k=1, \dots, N} \Ind_k, 
\end{equation}
where 
\begin{equation}
\Ind_k :=  \min_{u \perp \mathds{1}} \frac{\int_{\M} \lvert \nabla u \rvert^2 \rho_k dx }{\langle  u,  u\rangle_{\rho_k}}.
\label{Indivisbility}
\end{equation} 
In \eqref{Indivisbility}, $\mathds{1}$ stands for the function that is identically equal to one, and $\perp$ indicates orthogonality with respect to the inner product $\langle \cdot , \cdot \rangle_{\rho_k}$. This may as well be defined as the first non-trivial eigenvalue of the operator $\Delta_{\rho_k}$.

\begin{definition}[Well-separated mixture model]\label{def:well-separated-mixture}
A mixture model 
\[ \rho =  \sum_{k=1}^N w_k \rho_k \]
is well-separated if 
\[ \mathcal{S} \ll 1 , \quad \frac{\C}{\Ind} \ll 1.\]
These conditions are to be read as: the overlapping $\S$ is small enough, and the coupling parameter $\mathcal{C}$ is small in comparison to the 
indivisibility parameter $\Ind$. 
\end{definition}

In Section \ref{sec:discussionparameters} we discuss different interpretations of $\S, \C, \Ind$ and present some examples whose purpose is to illustrate the significance of all three parameters.  Notice that the notion of a well-separated model has been introduced qualitatively, matching the informal version of our first main result (Theorem \ref{thm:well-separated-gives-cone-structure-continuum}). In Theorem \ref{mainTheorem} we give a precise quantitative relation of how tightly the measure $F_{\sharp} \nu$ concentrates around orthogonal vectors depending on the quantities $\S$ and $\C/\Theta$. In other words, Theorem \ref{mainTheorem} is the quantitative (and precise) version of Theorem \ref{thm:well-separated-gives-cone-structure-continuum}.

\begin{remark}
We will use the functions $q_1, \dots, q_N$ frequently in the remainder so it is worth mentioning a simple interpretation for them. Suppose that we wanted to obtain one sample from the mixture model introduced in \eqref{DefRho}: $\x \sim \rho$. One way to obtain $\x$ is to first choose a random number $\bm{k} \in \{1, \dots, N\}$ where $\Prob(\bm{k}= k) = w_k$ and then sample $\x$ from $\rho_{\bm{k}}$. Conversely given the value $\x =x$, we can check that:
\[ \Prob(\bm{k}= k | \x=x) = \frac{w_k \rho_k(x)}{\rho(x)}.  \]
In that order of ideas, $q_k(x)$ is simply the square root of the likelihood of $\x$ being sampled from $\rho_k$. 
\end{remark}

\subsection{Geometric structure of Laplacian embeddings}
\subsubsection{The continuum setting}
\label{sec:continuum}

In what follows we describe the geometry of the ground-truth spectral embedding (i.e. the measure clustering embedding in the continuum). First, let $u_1, \dots, u_N$ be the first
$N$ orthonormal (with respect to $\langle \cdot  , \cdot \rangle_{\rho}$) eigenfunctions of $\Delta_\rho$ corresponding to its $N$ smallest eigenvalues. Define the Laplacian embedding 
\begin{equation}
  F: x \in \M \longmapsto \left( \begin{matrix} u_{1}(x) \\ \vdots \\ u_{N}(x)  \end{matrix} \right)
  \in \R^N. 
\label{ContEmbedding}
\end{equation}

The push-forward of the measure $\nu$ by $F$, denoted $\mu:= F_{\sharp } \nu $, is a measure on $\R^N$ which describes the distribution of points originally in $\M$ after transformation
by the Laplacian map $F$. Our first main result asserts that if the mixture model \eqref{DefRho} is well-separated, then the measure $\mu$ has an orthogonal cone structure; this is a geometric notion originally introduced for point clouds in \cite{BinYu} and  extended here for arbitrary probability distributions.

\begin{definition}\label{def:contcones}
Let $\sigma \in (0,  \pi /4 \nc)$, $ \delta \in [0,1)$, and $r >0$. We say that the probability measure $\mu \in \mathcal{P}(\R^k)$ has an orthogonal cone structure with parameters $(\sigma, \delta, r)$ if there exists an orthonormal basis for $\R^k$, $e_1, \dots, e_k$, such that 
\[  \mu \left( \bigcup_{j=1}^{k} C(e_j, \sigma,r)     \right) \geq 1 - \delta, \]
where $C(e_j, \sigma,r)$ is the set
\[  C(e_j , \sigma,r) := \left\{  z \in \R^k \:  : \:      \frac{ z \cdot e_j}{|z|} > \cos(\sigma), \quad  |z| > r       \right\}. \]
\end{definition}
In simple words, a measure $\mu \in \mathcal{P}(\R^k)$ has an orthogonal cone structure if it is concentrated on cones centered at vectors which form an orthonormal basis for $\R^k$.  The parameter $\sigma$ is the opening angle of the cones, $r$ is the radius of the ball centered at the origin that we remove from the cones to form the sets $C(e_j,\sigma,r)$, and $\delta$ is the amount of mass  that is allowed to lie outside the cones. The condition $\sigma<\pi/4$ ensures that the sets $C(e_j,\sigma,r)$ do not overlap.

We are now ready to state our first main result.
\begin{theorem} \label{mainTheorem} Let $\nu$ be a mixture model with density $\rho$ of the form \eqref{DefRho} which we assume satisfies Assumptions \ref{Assump1}. For $\sigma\in(0,\pi/4)$, suppose 
\[\S <\frac {w_{min}(1-\cos^2(\sigma))}{w_{max} \cos^2(\sigma) N^2  }\,.\]
Define
\[\tau:=4  \left( 
\sqrt{\frac{\Ind(1 -N\S)}{\C}  } - \frac{\sqrt{ N \S}}{(1- \S)} \right)^{-1} +  \sqrt{\S}\,. \]
and 
\[ \delta^*:= \frac{w_{max} \cos^2(\sigma) N^2 \S }{w_{min}(1-\cos^2(\sigma))}\,.\]
Here, $w_{max}:=\max_{i  =1, \dots, N} w_i$ and $w_{min}:=\min_{i  =1, \dots, N} w_i$.  

Suppose that 
\begin{equation}\label{Ndelta<1}
 \tau-\sqrt{S}>0, \quad \tau N<1,
\end{equation}
and 
 $s, t >0$  satisfy  
\begin{equation}\label{muconehyp}
\frac{t^2\sin^2(s)}{N w_{max}} \geq N\left(\frac{\tau-\sqrt{\S}}{2}\right)^2
+ 4N^{3/2} \left(\frac{1}{\sqrt{1-N \tau}} -1 \right)\,,\qquad
s+\sigma <\frac{\pi}{4}.
\end{equation}
Then, the probability measure $\mu:=F_\sharp\nu$ has an orthogonal cone structure with parameters $\left(\sigma+s,\delta+t^2, \frac{1-\sin(s)}{\sqrt{w_{max}}}\right)$ for any $\delta\in[\delta^*,1)$.  Here, $F$ is the continuum Laplacian embedding defined in \eqref{ContEmbedding}.
\end{theorem}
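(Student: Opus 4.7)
The plan is to compare the Laplacian embedding $F$ with an idealized embedding
$\tilde{F}(x):=\bigl(q_1(x)/\sqrt{w_1},\ldots,q_N(x)/\sqrt{w_N}\bigr)^\top$
built from the likelihood-root functions of \eqref{qk-definition}. In the ideal case of disjoint components the two embeddings agree up to an orthogonal rotation and $\tilde F$ sends each $\supp\rho_k$ to the single point $e_k/\sqrt{w_k}$. My goal is to quantify the deviation from this ideal picture in terms of $\mathcal{S}$, $\mathcal{C}$, $\Theta$, and then invoke the cone-structure stability developed in Section~\ref{sec:Conic} to transfer a cone structure from $\tilde F_\sharp \nu$ to $F_\sharp \nu$.

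First I would show that $V:=\mathrm{span}(q_1,\ldots,q_N)\subset L^2(\rho)$ is $N$-dimensional and close to $U:=\mathrm{span}(u_1,\ldots,u_N)$. Elementary computations give $\|q_k\|_\rho^2=w_k$, $|\langle q_i,q_j\rangle_\rho|\leq \sqrt{w_iw_j\,\mathcal{S}}$ by Cauchy--Schwarz on the overlap integral, and $\int|\nabla q_k|^2\rho\,dx=w_k\mathcal{C}_k$ via the identity $\nabla q_k=\tfrac12 q_k(\nabla\rho_k/\rho_k-\nabla\rho/\rho)$. The Gram matrix of $\{q_k/\sqrt{w_k}\}_{k=1}^N$ is therefore a perturbation of the identity of size $O(N\sqrt{\mathcal{S}})$, and the min--max principle yields the upper bound $\lambda_N(\Delta_\rho)\lesssim \mathcal{C}/(1-N\mathcal{S})$.

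The main obstacle I anticipate is the matching lower bound $\lambda_{N+1}(\Delta_\rho)\gtrsim \Theta(1-N\mathcal{S})$. Since $\Theta$ is defined through the component operators $\Delta_{\rho_k}$, this requires showing that any $u\perp V$ in $L^2(\rho)$ has large Dirichlet energy: the natural strategy is to split $\int|\nabla u|^2\rho\,dx=\sum_k w_k\int|\nabla u|^2\rho_k\,dx$, use $\langle u,q_k\rangle_\rho=0$ to control each component mean $\int u\,\rho_k\,dx$ modulo $\sqrt{\mathcal{S}}$-errors, and then apply the Poincar\'e inequality associated with $\Delta_{\rho_k}$ with constant $\Theta_k\geq\Theta$. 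With both eigenvalue bounds in place, a Davis--Kahan-type perturbation theorem gives that the orthogonal projections onto $U$ and $V$ differ in $L^2(\rho)$-operator norm by at most $4\bigl(\sqrt{\Theta(1-N\mathcal{S})/\mathcal{C}}-\sqrt{N\mathcal{S}}/(1-\mathcal{S})\bigr)^{-1}$, which is precisely the first summand of $\tau$. The trailing $\sqrt{\mathcal{S}}$ then absorbs the cost of the non-orthonormality of $\{q_k/\sqrt{w_k}\}$, producing an orthogonal matrix $A\in O(N)$ with $\int|F(x)-A\tilde F(x)|^2\,d\nu(x)\lesssim \tau^2$.

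The proof then reduces to the cone structure of $\tilde F_\sharp\nu$ followed by a stability transfer. Pointwise $|\tilde F(x)|^2=\sum_k q_k^2/w_k\geq 1/w_{max}$, so the radius condition in Definition~\ref{def:contcones} is automatic for $\tilde F$; the angular condition $\tilde F(x)\in C(e_k,\sigma,\cdot)$ fails only when no coordinate $q_k^2/w_k$ dominates the others by the factor $\cos^2(\sigma)/(1-\cos^2(\sigma))$. On this mixed set a pigeonhole applied to $\sum_k q_k^2=1$ produces two indices $i\neq j$ with $q_i^2q_j^2/(w_iw_j)$ bounded below, and Markov combined with $\int q_i^2q_j^2/(w_iw_j)\rho\,dx\leq \mathcal{S}$ bounds its $\nu$-mass by $\delta^*$. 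To pass from $A\tilde F$ to $F$, the stability lemmas of Section~\ref{sec:Conic} convert the $L^2(\nu)$-displacement of order $\tau$ into extra excluded mass $t^2$, a widened opening angle $s$, and a shrunken radius $(1-\sin s)/\sqrt{w_{max}}$ through a Chebyshev estimate on $\nu\{|F-A\tilde F|\geq t\sin(s)/\sqrt{N w_{max}}\}$; hypothesis \eqref{muconehyp} encodes exactly this trade-off between $s$, $t$, and $\tau$. Combining the mixed-component excess $\delta^*$ with the displacement excess $t^2$ yields the advertised cone structure.
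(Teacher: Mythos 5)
Your proposal follows essentially the same route as the paper's proof: establish the orthogonal cone structure for $F^Q_\sharp\nu$ via the pigeonhole/Markov argument yielding the mass defect $\delta^*$ (Proposition~\ref{prop:muQcone}), control $\|q_k-\PN(q_k)\|_\rho$ through $\langle\Delta_\rho q_k,q_k\rangle_\rho=w_k\C_k$ and the lower bound on $\lambda_{N+1}$ obtained by splitting the Dirichlet energy over components and applying the component Poincar\'e inequalities (Propositions~\ref{ProjectionsError} and~\ref{auxLemmaSigmaB}), orthonormalize the near-orthogonal projections to produce the rotation $O$, and transfer the cone structure via the Wasserstein stability result (Proposition~\ref{LemmaConeWasserstein}). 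The only cosmetic differences are your Davis--Kahan packaging of the subspace comparison (the paper expands directly in the eigenbasis) and the informal ``$\lesssim\tau^2$'' for the displacement, whose precise form --- linear in $\tau$ to leading order because of the orthonormalization cost --- is exactly what hypothesis \eqref{muconehyp} encodes.
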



Notice firstly that the smaller the quantities $\S$ and $\C/ \Ind$ are (i.e. the better separated the mixture model is), the smaller $\tau$ is. As a consequence the right-hand side of \eqref{muconehyp} is also small. This allows a choice of parameters $s,t$ close to zero. Secondly, for small $\S$,
we can choose $\sigma$ to be small, and if $\S$ is small compared to $\frac {w_{min}(1-\cos^2(\sigma))}{w_{max} \cos^2(\sigma) N^2  }$, then $\delta^*$ will be small also.
Following Definition~\ref{def:contcones}, small choices of $\delta$ and $t$ mean that a bigger proportion of the mass of the measure $F_{\sharp} \nu$ can be found inside the cones, whereas if $\sigma$ and $s$ are small, then the cones are more concentrated around orthogonal vectors. The bottom line is that the smaller $\S$ and $\C/ \Ind$ are, the more concentrated around orthogonal vectors the measure $F_{\sharp} \nu$ is. Note also that keeping all other parameters fixed, smaller choices of $\sigma$ will force a bigger choice of $\delta$. This reflects a trade-off between the total mass of the measure $F_{\sharp} \nu$ inside the cones, and the opening angle of the cones.

To see that the measure $F_{\sharp} \nu$ has an orthogonal cone structure under the assumption of a well-separated mixture model, we first study the geometry of a related measure $F^Q_{\sharp} \nu$ where $F^Q$ is the map
\begin{equation}
  F^Q: x \in \M \longmapsto  \left( \begin{matrix} \frac{q_1(x)}{\sqrt{w_1}} \\ \vdots \\ \frac{q_N(x)}{\sqrt{w_N}} \end{matrix} \right) \in \R^N,
  \label{FQ}
  \end{equation}
   where we recall the functions $q_k$ are as in \eqref{qk-definition}. Note that the functions $q_k$ are normalized in the sense that $\sum_{k=1}^N |q_k(x)|^2 = 1$ for all $x\in\M$, whereas the entries of $F^Q$ are normalized in $L^2(\rho)$ since
   \[\|F^Q_k\|_\rho^2=\int_\M \frac{\rho_k(x)}{\rho(x)}\,\rho(x)dx= 1 \qquad \forall \, k\in\{1,...,N\}\,.\]
   We will show that $F^Q_{\sharp} \nu $ has an orthogonal cone structure provided the overlapping parameter of the mixture model $\mathcal{S}$ is small enough (see Proposition \ref{prop:muQcone}).  In Section \ref{sec:Conic} we show that orthogonal cone structures are stable under Wasserstein perturbations. Theorem \ref{mainTheorem} is then proved by showing that the measures $F^Q_{\sharp} \nu$ and $F_{\sharp} \nu$ are  close to each other in the Wasserstein sense modulo an orthogonal transformation. To achieve this we will essentially show that $q_1, \dots, q_N$ are close to spanning the subspace generated by the first $N$
   eigenfunctions of $\Delta_\rho$.  This statement is trivially true in the extreme case when $\M$ has $N$ connected components and the $\rho_k dx$ are the uniform distributions of the connected components. Indeed, in that case, the first $N$ eigenvalues of $\Delta_\rho$ are zero, and the first $N$ eigenfunctions of $\Delta_\rho$ are rescaled versions of the indicator functions of the components. In other words, the span of the $\rho_k$ is exactly equal to the span of the first $N$ eigenfunctions of $\Delta_\rho$, and in particular up to a rotation we have $F^Q=F  $.

\subsubsection{The discrete setting}\label{sec:fromconttodiscr}

Having discussed spectral clustering at the continuum level, we now move to the discrete setting. Let $\M_n := \left\{ \x_1, \dots, \x_n \right\}$ be  i.i.d. samples 
from the probability distribution $d \nu = \rho(x) dx$. We denote by $\nu_n$ the empirical measure associated to the samples $\x_1,\dots, \x_n$,
\begin{equation}
\nu_n := \frac{1}{n}\sum_{i=1}^{n}\delta_{\x_i}.
\label{Empirical} 
\end{equation}
We denote by $L^2(\nu_n)$ the space of functions $u_n : \M_n \rightarrow \R$ and identify $u_n \in L^2(\nu_n)$ with a column vector $(u_n(\x_1) , \dots, u_n(\x_n))^T $ in $\R^n$.

We associate a weighted graph structure to $\M_n$ as follows. Let $\eta: [0,\infty) \rightarrow [0,\infty)$ be a non-increasing Lipschitz function with compact support and let
\begin{equation}
\alpha_\eta:= \int_{0}^{\infty}\eta(r)r^{m+1}dr, 
\label{alphaeta}
\end{equation}
where we recall $m\leq d$ is the intrinsic dimension of the manifold $\M$ (typically we think of $m \ll d$, but this is unimportant for the purposes of the results discussed here). We assume that the kernel has been normalized so that
\[ \int_{\R^m}\eta(|x|)dx =1.  \]
For a given $\veps>0$ we define the weight matrix $\W_n$ with entries $(\W_n)_{ij}$,
\begin{equation}
(\W_n)_{ij} := \frac{2 \eta_{\veps}(\lvert \x_i-\x_j \rvert)}{ \alpha_\eta\veps^2 (d_\veps(\x_i))^{1/2} \cdot (d_\veps(\x_j))^{1/2}}, 
\label{KernelizedWeights}
\end{equation}
measuring how similar points $\x_i$ and  $\x_j$ are. Here, $|\cdot|$ denotes the Euclidean distance
in the ambient space $\R^d$, $\eta_\veps$ is defined as 
\[\eta_\veps(r):= \frac{1}{\veps^m}\eta\left(\frac{r}{ \veps}\right),\] 
and  
\[ d_\veps(y):= \sum_{j=1}^{n} \eta_\veps(|y- \x_j|)\qquad y\in\M\,.\]
 We notice that the weights in \eqref{KernelizedWeights} have been appropriately rescaled so that the eigenvalues of the graph Laplacian $\Delta_n$ defined below approximate those of $\Delta_\rho$. We would also like to notice that $(\W_{n})_{ij}$ can be written as
\[ (\W_n)_{ij}=\frac{2 \eta_{\veps}(\lvert \x_i-\x_j \rvert)}{  \alpha_\eta n \veps^2 (d_\veps(\x_i)/n)^{1/2} \cdot (d_\veps(\x_j)/n)^{1/2}}     ,\]
and that $d_\veps(\cdot) /n$ is nothing but a kernel density estimator  for the density $\rho$ due to the fact that $\eta$ has been assumed to be normalized.

The graph Laplacian operator $\Delta_n: L^2(\nu_n)\rightarrow L^2(\nu_n)$ is then defined as the matrix
\[ \Delta_n := \D_n- \W_n, \] 
where $\D_n$ is the degree matrix associated to the weight matrix $\W_n$, that is, $\D_n$ is a diagonal matrix whose diagonal entries $(\D_n)_{ii}$ are given by $(\D_n)_{ii}= \sum_{j=1}^{n}(\W_n)_{ij}$. 
Notice that here we have suppressed the dependence of $\Delta_n$ on $\veps$ for notational convenience. 

The spectrum of $\Delta_n$ induces a graph Laplacian embedding   
\[F_n : \M_n \rightarrow \R^N\]
\begin{equation}
F_n: \x_i \longmapsto \left( \begin{matrix} u_{n,1}(\x_i) \\ \vdots \\ u_{n,N}(\x_i)  \end{matrix} \right),
\label{DiscreteEmbedding}
\end{equation}
where $u_{n,1}, \dots, u_{n,N}$ are the first $N$ eigenvectors of $\Delta_n$.

\begin{remark}
The normalization terms appearing in \eqref{KernelizedWeights} induce the \textit{kernelized} graph Laplacian $\Delta_n$. There are other possible normalizations that one can consider such as the ones in the parametric family introduced in \cite{Coifman1}. The effect of the different normalizations can be observed asymptotically (as $n \rightarrow \infty$) in the way the density $\rho$ affects the differential operator $\Delta_\rho$  (which in general will differ from \eqref{ContLaplaciansRho}). We expand our discussion on the choice of different normalizations in Section \ref{sec:differentweights}; for an overview of a more general family of normalizations see \cite{FHBHAS}. In the context of this paper, it is enough to say that $\Delta_n$ and $\Delta_\rho$ (as defined in \eqref{ContLaplaciansRho}) are closely related to each other according to Theorem \ref{TheoremTranps}.
\end{remark}

In order to state and prove our second main result we make some additional assumptions on the ground-truth measure $\nu$, on the number of data points $n$ and the connectivity parameter $\veps$.

\begin{assumption}\label{assumption-on-rho-contrast}
We assume that $\M$ is a smooth, compact, orientable, connected $m$-dimensional manifold without a boundary. Furthermore, we assume that $\rho\in C^1(\M)$, and that there exist positive numbers $0<\rho^-< \rho^+$ such that 
$$
\rho^- \le \rho(x) \le \rho^+, \qquad \forall x \in \mathcal{M}.
$$
We denote by $C_{Lip}>0$ the Lipschitz constant of $\rho$.

We also assume that $n$ is large enough and $\veps$ is small enough, so that 
\begin{equation}
\label{ineq:assump}
\veps\left(1+ \sqrt{\lambda_N}\right) + \frac{\log(n)^{p_m}}{n^{1/m}\veps} \leq \min \{ C, \frac{1}{2}(\lambda_{N+1}- \lambda_N) \} \,,
\end{equation}
where $p_m= 3/4$ if $m=2$ and $p_m= 1/m$ when $m\geq 3$,  $C>0$ is
a finite  constant that depends only on $\M$, and $\lambda_N$ and $\lambda_{N+1}$ are the $N$-th and $(N+1)$-th eigenvalues of $\Delta_\rho$
in increasing order.
\end{assumption}

\begin{remark}
Several theoretical results deducing convergence rates for the spectra of graph Laplacians towards their continuum counterparts make similar assumptions to those in Assumption \ref{assumption-on-rho-contrast} (e.g \cite{Moritz,GTSSpectralClustering}), and for the moment being we will only be able to state our second main result under the assumption that $\M$ is compact (see Section \ref{UnboundedDomains} where we discuss further extensions and generalizations).
Regarding the assumptions on $n$ and $\veps$, we remark that requiring the quantity on the left hand side to be less than a constant $C$ is the same  condition used in the literature to quantify rates of convergence for the spectra of graph Laplacians (see for example \cite{BIK,Moritz}). On the other hand, requiring the left hand side of \eqref{ineq:assump} to also be smaller than $\frac{1}{2}(\lambda_{N+1}-\lambda_{N})$ is a condition that is easier to satisfy when the model is well-separated. Indeed, we will later see that it is possible to find an upper bound for $\lambda_N$ (Lemma \ref{UppLambdaN}) and a lower bound for $\lambda_{N+1}$ (Proposition \ref{auxLemmaSigmaB}) in terms of the parameters of the mixture model. That is, one can restate our assumption and write an inequality in terms of the overlapping, indivisibility and coupling parameters.  However, at this point we do not believe there is any benefit in developing this discussion any further, and we remark that these are simply concrete and technical conditions that guarantee that we have entered the regime where our theorems hold.
\end{remark}

\begin{remark}
Notice that when the manifold $\M$ is smooth and compact as in Assumptions \ref{assumption-on-rho-contrast}, Assumption \ref{Assump1} is automatically satisfied.   
\end{remark}

\begin{remark}
Notice that the fact that $\rho$ is bounded away from zero does not imply that the individual components $\rho_k$ are as well. In particular,  an individual component $\rho_k$ may be arbitrarily close to zero (or even equal to zero) in certain region of $\M$ without contradicting Assumption \ref{assumption-on-rho-contrast}. As a matter of fact, one precisely needs this to be the case in order for a model to be well-separated (for otherwise $\S$ would not typically be small).
\end{remark}

In Section \ref{App:A}, we make the connection between the spectrum of the graph Laplacian $\Delta_n$ and the spectrum of the continuous operator $\Delta_\rho$. The proof of the main technical result, Theorem~\ref{TheoremTranps}, relies on a comparison between the Dirichlet forms associated to the operators $\Delta_n$ and $ \Delta_\rho$ by way of careful interpolation and discretization of discrete and continuum functions. 
Only small modifications to the proofs in \cite{BIK,Moritz} are necessary and Section~\ref{App:A} should be understood as a short guide on how to adapt the machinery developed there to the context of this paper.

Finally, we leverage Theorems \ref{mainTheorem} and \ref{TheoremTranps} to establish our second main result, which asserts that under Assumption \ref{assumption-on-rho-contrast} and provided the mixture model is well-separated, the measure $F_{n \sharp}\nu_n $ also has an orthogonal cone structure with very high probability. More precisely we have the following.

\begin{theorem}
\label{PropMuMun}
Let $N\ge 2$ and $\beta>1$.
Suppose that $\M$ and $\rho$ satisfy Assumption \ref{assumption-on-rho-contrast} for some $\veps>0$. Assume also that the parameters $\S, \C , \Ind$ of the mixture model $\rho$ are such that the quantities $\tau, \delta^*$ and $\sigma$ from Theorem \ref{mainTheorem} satisfy \eqref{Ndelta<1} and \eqref{muconehyp}. In addition, assume $\S<N^{-2}$.
Let $\x_1, \dots, \x_n$ be i.i.d. samples from the measure $d\nu = \rho dx$ with associated empirical measure $\nu_n$,  and let $F_n$ be the  Laplacian embedding defined in \eqref{DiscreteEmbedding}. 

Then, there exists a constant $C_\beta>0$ depending only on $\beta$, so that with probability at least $1- C_\beta n^{-\beta}$, the probability measure $\mu_n:=F_{n\sharp}\nu_n$ has an orthogonal cone structure with parameters 
\[\left(\sigma+s,\delta+t^2, \frac{1-\sin(s)}{\sqrt{w_{max}}}\right)\],
for any $\delta\in[\delta^*,1)$, and $s, t>0$ satisfying
\[ \sigma + s <\frac{\pi}{4}, \]
\begin{align}
\begin{split}
\label{ineq:mainTheorem2}
\frac{t\sin(s)}{\sqrt{N w_{max}}} \geq & 
\sqrt{N\left(\frac{\tau-\sqrt{\S}}{2}\right)^2
+ 4N^{3/2} \left(\frac{1}{\sqrt{1-N \tau}} -1 \right)  } 
\\ &+  \sqrt{N \phi(\S, \C, \Theta, N, \veps, n, m)},
\end{split}
\end{align}
where 
\begin{align*}
 \phi(\S, \C, \Theta, N, \veps, n, m)& = c_\M
\left(\frac{N\C}{1- N \S^{1/2}} \right)
\left( \veps +  \frac{\log(n)^{p_m}}{\veps n^{1/m}}\right) \\
& \cdot  \left( \left( \sqrt{\Theta (1 - N \S)} - \frac{\sqrt{\C N \S}}{1- \S}\right)^2 - \frac{N \C}{1- N \S^{1/2}} \right)^{-1} ,
\end{align*}  
and $c_\M>0$ is a constant depending on $\M$, $\rho^\pm$, $C_{Lip}$, $\eta$
and $N$. In the above we require $\S$ and $\C/\Ind$ small enough so that all the inner terms in the definition of $\phi$ make sense and are positive.
\end{theorem}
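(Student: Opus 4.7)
The plan is to derive Theorem~\ref{PropMuMun} by combining three ingredients: Theorem~\ref{mainTheorem} at the continuum level, the quantitative spectral convergence of $\Delta_n$ to $\Delta_\rho$ provided by Theorem~\ref{TheoremTranps} from Section~\ref{App:A}, and the Wasserstein stability of orthogonal cone structures developed in Section~\ref{sec:Conic}. Schematically, I would show that $\mu = F_\sharp\nu$ already has an orthogonal cone structure with some slack, that $\mu_n = F_{n\sharp}\nu_n$ lies within Wasserstein distance $\sqrt{N\phi}$ of a rotated copy of $\mu$ with probability at least $1 - C_\beta n^{-\beta}$, and finally that the cone structure is inherited by $\mu_n$ after paying for this perturbation in the parameters.

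First, under the standing hypotheses \eqref{Ndelta<1}, \eqref{muconehyp} (applied to a slightly tightened choice of opening $s'<s$ and mass defect $t'<t$), Theorem~\ref{mainTheorem} gives that $\mu$ has an orthogonal cone structure with parameters $(\sigma+s', \delta+(t')^2, (1-\sin s')/\sqrt{w_{max}})$ relative to some orthonormal basis $e_1,\dots,e_N$ of $\R^N$. I will split the budget so that $s = s' + s''$ and $t^2 = (t')^2 + (t'')^2$, reserving $s'', t''$ to absorb the discrete fluctuation.

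Second, I would invoke Theorem~\ref{TheoremTranps} to transfer the first $N$ eigenfunctions. Let $u_1,\dots,u_N$ and $u_{n,1},\dots,u_{n,N}$ be orthonormal eigenbases of $\Delta_\rho$ and $\Delta_n$ respectively. Combining the spectral approximation rate (which delivers the factor $\veps + \log(n)^{p_m}/(\veps n^{1/m})$ together with the constant $c_\M$) with a Davis--Kahan type bound for eigen-spaces, whose denominator is the spectral gap $\lambda_{N+1}-\lambda_N$ bounded below via Lemma~\ref{UppLambdaN} and Proposition~\ref{auxLemmaSigmaB} by the quantity $(\sqrt{\Theta(1-N\S)} - \sqrt{\C N\S}/(1-\S))^2 - N\C/(1-N\S^{1/2})$, one obtains the existence of an orthogonal matrix $U \in O(N)$ and a transport plan $\pi_n$ between $\nu$ and $\nu_n$ such that
\[
\int_{\M\times\M_n} |F_n(y) - U F(x)|^2 \, d\pi_n(x,y) \;\le\; N\,\phi(\S,\C,\Ind,N,\veps,n,m),
\]
with probability at least $1 - C_\beta n^{-\beta}$. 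Pushing this plan forward through $(F\circ\pi_1, F_n\circ \pi_2)$ gives $W_2(\mu_n, U_\sharp\mu)^2 \le N\phi$. Since $U$ is orthogonal, $U_\sharp \mu$ has an orthogonal cone structure with the same parameters as $\mu$, now relative to the basis $Ue_1,\dots,Ue_N$.

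Third, I apply the Wasserstein stability result of Section~\ref{sec:Conic}: if $W_2(\mu_n, U_\sharp\mu) \le \sqrt{N\phi}$, then the cone angle increases by at most $\arcsin(\sqrt{N\phi}/r)$ with $r = (1-\sin s')/\sqrt{w_{max}}$, and the mass outside the cones grows by a Markov-type term of order $N\phi/r^2$. Matching these against $s'' \approx s - s'$ and $(t'')^2 \approx t^2 - (t')^2$ and optimizing the split produces exactly the condition \eqref{ineq:mainTheorem2}, where the extra summand $\sqrt{N\phi(\S,\C,\Ind,N,\veps,n,m)}$ quantifies the cost of the discrete perturbation.

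The main obstacle is the second step, specifically producing the $L^2(\nu_n)$ bound on eigenfunctions with the precise denominator appearing in $\phi$. The challenge is threefold: (i) the first $N$ eigenfunctions of $\Delta_\rho$ are defined only up to an orthogonal transformation, so the rotation $U$ must be produced carefully (choosing optimal orthonormal bases in each eigenspace via an SVD of the Gram matrix of the discretized $u_k$'s against the $u_{n,k}$'s); (ii) the spectral gap bound that enters the Davis--Kahan denominator must be expressed in the well-separatedness quantities $\S,\C,\Ind$, which requires carrying through the intermediate estimates from the proof of Theorem~\ref{mainTheorem} and Proposition~\ref{auxLemmaSigmaB}; and (iii) one must carefully move between $L^2(\nu)$ and $L^2(\nu_n)$ via the interpolation/discretization operators of Section~\ref{App:A} without losing the rate. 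Once these technical pieces are in place, the remaining pieces --- the probabilistic control via Theorem~\ref{TheoremTranps} and the deterministic cone stability --- fit together routinely.
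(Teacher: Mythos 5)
Your overall architecture is right and you have identified all the correct ingredients (Theorem~\ref{mainTheorem}, Theorem~\ref{TheoremTranps} together with the eigenvalue bounds of Propositions~\ref{auxLemmaSigmaB} and~\ref{UppLambdaN}, and the Wasserstein stability of cone structures). However, your final assembly differs from the paper's in a way that matters quantitatively, and as written it does not deliver the theorem with the stated parameters. You perturb twice: first from $F^Q_\sharp\nu$ to (a rotation of) $F_\sharp\nu$ inside Theorem~\ref{mainTheorem} with a budget $(s',t')$, and then from $U_\sharp\mu$ to $\mu_n$ with the remaining budget $(s'',t'')$, applying Proposition~\ref{LemmaConeWasserstein} two times. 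The paper instead applies Proposition~\ref{LemmaConeWasserstein} exactly once: it anchors at $F^Q_\sharp\nu$, which has cone parameters $(\sigma,\delta,w_{max}^{-1/2})$ by Proposition~\ref{prop:muQcone}, and uses the triangle inequality for $W_2$ to bound $W_2\bigl(OR^{-1}F_{n\sharp}\nu_n,\,F^Q_\sharp\nu\bigr)$ by the sum of the continuum term \eqref{estFQFtilda} and the discrete term $\sqrt{N\phi}$; this sum is precisely the right-hand side of \eqref{ineq:mainTheorem2}, and a single application of the stability lemma then yields exactly $(\sigma+s,\delta+t^2,(1-\sin s)/\sqrt{w_{max}})$.

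The concrete gap in your version: after two applications of Proposition~\ref{LemmaConeWasserstein} the radius degrades to $(1-\sin s')(1-\sin s'')/\sqrt{w_{max}}$ rather than $(1-\sin(s'+s''))/\sqrt{w_{max}}$, and the second application requires $\frac{(1-\sin s')\,t''\sin(s'')}{\sqrt{N\,w_{max}}}\geq \sqrt{N\phi}$, whose extra factor $(1-\sin s')$ and the constraint $t'\sin(s')\gtrsim A$ from the first step mean that the existence of an admissible split $(s',t'),(s'',t'')$ is \emph{not} implied by \eqref{ineq:mainTheorem2} alone; your claim that optimizing the split ``produces exactly the condition \eqref{ineq:mainTheorem2}'' is therefore unjustified. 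The fix is to not establish the cone structure of $F_\sharp\nu$ as an intermediate step at all, but to chain the two Wasserstein estimates first (as the paper does in Section~\ref{sec:proof2}) and perturb the cone structure of $F^Q_\sharp\nu$ only once. A second, minor point: your step (2) re-derives a Davis--Kahan-type eigenspace bound, but Theorem~\ref{TheoremTranps} and Corollary~\ref{cor:Transp} already supply the orthonormal family $g_1,\dots,g_N\in U$ with $\|g_j-u_{n,j}\circ T_n\|_{L^2(\nu)}^2\le\phi$; from there the coupling $\pi_n=(G\times F_n)_\sharp(\mathrm{Id}\times T_n)_\sharp\nu$ gives $W_2(G_\sharp\nu,F_{n\sharp}\nu_n)^2\le N\phi$ directly, and orthogonality of $\{g_j\}$ in $U$ gives the rotation $R$ with $G=RF$ for free.
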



Notice that the only extra term in inequality \eqref{ineq:mainTheorem2} that does not appear in \eqref{muconehyp} is the term
$\phi(\S, \C, \Theta, N, \veps, n, m)$. 
As we will see in Section \ref{sec:proof2} (following the estimates in Section \ref{App:A}) this term is an upper bound for the Wasserstein distance between the measures $F_{\sharp}\nu$ and $F_{n \sharp} \nu_n$ (up to an orthogonal transformation). The assumptions in Theorem \ref{PropMuMun} imply that $F_{\sharp} \nu$ has an orthogonal cone structure as described in Theorem \ref{mainTheorem}, and hence in order to deduce that $F_{n \sharp} \nu_n$ has an orthogonal cone structure (as it is stated in Theorem \ref{PropMuMun})  it is enough to combine Proposition \ref{LemmaConeWasserstein} with the estimates on the Wasserstein distance between $F_{n\sharp} \nu_n$ and $F_{\sharp} \nu$.

We observe that asymptotically, if the parameter $\veps>0$ scales with $n$ like
\[ \frac{\log(n)^{p_m}}{n^{1/m} } \ll \veps \ll 1,  \] 
then $\phi(\S, \C, \Theta, N, \veps, n, m)$ gets smaller as $n \rightarrow \infty$, and we deduce that the geometry of the point cloud $F_n(\M_n)$ converges to that of $F_{\sharp} \nu$. Nevertheless, it is worth emphasizing that our results are meaningful in the finite (but possibly large) $n$ case. We also highlight that the term
$\phi(\S, \C, \Theta, N, \veps, n, m)$ suggests taking $\veps := \left(\log(n)^{p_m}/n^{1/m}\right)^{1/2}$ as an optimal choice for $\veps$. However, we must be careful with this conclusion as it relies on the estimates presented in Theorem \ref{TheoremTranps} which are not necessarily sharp (but are to the best of our knowledge the best available). This also means that any future improvements to the estimates in \cite{BIK,Moritz} will directly translate into an improvement of the estimates in Theorem~\ref{PropMuMun}.

\section{Discussion} 
\label{sec:discussion}

The purpose of this section is to provide a more detailed account of the notions introduced above, as well as to discuss our main results and provide several remarks on extensions and generalizations.

\subsection{Overlapping, coupling and indivisibility parameters}
\label{sec:discussionparameters}

We now give a more detailed description of the parameters $\S$, $\C$ and $\Ind$ introduced in Section~\ref{sec:wellsepa}. 

The overlapping parameter $\S$ is small if the components $\rho_k$ have well-separated mass in the sense that if at some point $x\in\M$ one component $\rho_k(x)$ is large, then all other components $\rho_i(x)$, $i\neq k$, are small at that point. In this case, at $x\in\M$, we have $\rho(x)\approx w_k\rho_k(x)$, and so 
$$
\frac{\rho_k(x)\rho_i(x)}{\rho(x)} \approx \frac{\rho_i(x)}{w_k},
$$
which is small.
If $\rho_k$ is small at $x\in\M$ on the other hand, then 
$$
\frac{\rho_k(x)\rho_i(x)}{\rho(x)} \leq \frac{\rho_k(x)}{w_i}
$$
is also small.
In other words, the parameter $\S$ being small requires the components not to \emph{overlap} too much (hence the name given to it).\\

If the indivisibility parameter $\Ind$ is small, this is an indication that our choice of mixture model $\rho(x)=\sum_k w_k\rho_k(x)$ for a given density $\rho$ is not optimal in the sense that it could be split into smaller meaningful components. This property is captured by the spectrum of the operators $\Delta_{\rho_k}$. For any $k\in\{1,...,N\}$, and $u,v\in C^1(\M)$, 
$$
\langle  \Delta_{\rho_k}u, v \rangle_{\rho_k}
= \int_\M \nabla u\cdot \nabla v \, \rho_k dx
=\langle \nabla u, \nabla v \rangle_{\rho_k},
$$
and so the first eigenvalue is zero with constant eigenfunction $\mathds{1}$. For the sake of illustration, suppose that the support of $\rho_k$ has two disconnected components. In that case, one can construct an eigenfunction orthogonal to $\mathds{1}$ that takes different constant values on the two different components, and so the second eigenvalue is zero. 
In light of the max-min theorem (see for example \cite[Thm.~8.4.2]{Buttazzo}, also known as Courant--Fisher theorem) and  definition \eqref{Indivisbility}, we 
observe that $\Ind_k$ is exactly the second eigenvalue of the operator $\Delta_{\rho_k}$.

It is well-known, in both the spectral geometry and
machine learning literature, that the first
non-trivial eigenvalue of the Laplacian associated to a
geometric object (discrete or continuum) is intimately related
to the problem of two-way clustering. Having a small first non-trivial eigenvalue
indicates the presence of two clusters in the dataset in the discrete setting. 
 This is essentially because the eigenvalue 
problem can be interpreted as a relaxation of balanced cut minimization problems (see \cite{vonLux_tutorial} and also Cheeger's inequality for graphs  \cite[Sec.~2.3]{chung1997spectral}). 
The more $\rho_k$ concentrates on two separate components, the smaller the second eigenvalue will become (see \cite{FHBHAS} for precise estimates). This is why the second eigenfunction can be interpreted as an approximate identifier for clusters in the problem of two-way clustering, and it is usually referred to as the \emph{Fiedler vector}. 
Intuitively, a low value of $\Ind_k$ indicates that the component $\rho_k$ can be split into further significant components. For this reason, we require that $\Ind$, the {indivisibility} parameter for the model to be large enough, guaranteeing that the components in the model cannot be divided into further meaningful components. 
This follows from the observation that if there exists a $k\in\{1,...,N\}$ such that
$\rho_k$ can be split into separate  components, then the second eigenvalue of
$\Delta_{\rho_k}$ will be small resulting in $\Ind$ being small as well.  \\

The coupling parameter $\C$ being small can be interpreted as a metastability condition on the relative entropy of measures $\rho_k dx$ with respect to the measure $\rho dx$.  
We define the relative entropy of a probability measure $\varrho dx$ with respect to $\rho dx$ by
\begin{equation*}
 H(\varrho|\rho) 
 := \int_\M \left(\frac{\varrho}{\rho}\right)\log\left(\frac{\varrho}{\rho}\right)\,\rho dx
 = \int_\M \varrho\left(\log \varrho-\log\rho\right)\,dx\,.
\end{equation*}
Given a probability density $\rho$, consider $\varrho(t,x)$ varying with time that satisfies the evolution equation
\begin{align}\label{FP}
  \partial_t \varrho &= \Delta \varrho +\divergence(\varrho\nabla
                       \log\rho)\notag\\
 &= \divergence\left(\varrho\nabla\left(\log \varrho - \log \rho\right)\right),
\end{align}
with initial condition $\varrho(0,x)=\rho_k(x)$ for a fixed $k\in\{1,...,N\}$. 
Model~\ref{FP} is a linear Fokker-Planck equation that is driven by the competition between linear diffusion and a confining drift term given by the potential $\log\rho$. Observe that $\rho$ is a stationary state of the system, and the quantity $H(\varrho(t)|\rho)$ gives a sense of how far $\varrho(t,x)$ is from the stationary state $\rho(x)$ at time $t\geq 0$. In this context, the coupling parameter $\C_k$ can be interpreted as an initial rate of decay for the entropy. More precisely, following Boltzmann's $H$-theorem, the \emph{relative Fisher Information} $I(\varrho|\rho)$ is defined as the entropy dissipation along solutions to \eqref{FP},
\begin{align*}
 \frac{d}{dt} H(\varrho(t)|\rho)
 &= \int_\M \partial_t \varrho \left(\log \varrho-\log\rho\right)\, dx\\
 &=- \int_\M \varrho\left|\nabla \left(\log \varrho-\log\rho\right)\right|^2\, dx 
 =: -I(\varrho(t)|\rho)\,,
\end{align*}
and so the definition of the coupling parameter \eqref{Coupling} can be interpreted as
$$
\mathcal{C}_k
=\frac{1}{4} I(\varrho(0)|\rho)
=\frac{1}{4} \int_{\M}  \left|\frac{\nabla\rho_k}{\rho_k}-\frac{\nabla\rho}{\rho}\right|^2\rho_k dx \,.
$$
Approximating the entropy for small initial times $t>0$, 
\begin{align*}
 H(\varrho(t)|\rho)
 =H(\rho_k|\rho)-4\C_k t+O(t^2)\,,
\end{align*}
we observe that if the coupling parameter $\C_k$ is small, then
$H(\varrho(t)|\rho)$ only varies very slowly in a neighborhood of $t=0$, and so we can consider $H(\rho_k|\rho)$ to be in a metastable state. For a well-separated mixture model, we expect the initial entropy $H(\rho_k|\rho)$ to be large enough as small entropy would indicate that $\rho_k$ takes values close to $\rho$ (note that $H(\varrho|\rho)=0$ if and only if $\varrho=\rho$ almost everywhere). In other words, having a small overlapping parameter $\S$ indicates that the initial entropy $H(\rho_k|\rho)$ cannot be too small. For well-separatedness, however, we require that $\C_k$ is small for all $k\in\{1,...,N\}$, that is to say that the Fisher Information is small initially when starting the evolution process \eqref{FP} at any of the components $\rho_k$. In this sense, we require the mixture model to be in a metastable state.\\

Having discussed the parameters $\S, \Ind$ and $\C$, we now give some intuition on the notion of a well-separated mixture as introduced in Definition~\ref{def:well-separated-mixture}. Roughly speaking, a mixture model is well-separated if it has small coupling and overlapping 
parameters $\mathcal{C}$ and $\mathcal{S}$,  but a large indivisibility parameter $\Ind$.   
A natural question is whether it is necessary to require both $\mathcal{C}$ and $\mathcal{S}$
to be small. Intuitively, $\mathcal{S}$ is small when the components $\rho_j$ are concentrated 
on disjoint sets that are far apart. In such a setting one expects the coupling parameter 
$\mathcal{C}$ to also be small. However, we present two examples where $\mathcal{C}$ and 
$\mathcal{S}$ are not necessarily simultaneously small, demonstrating that it is not enough to only impose a small $\C$ (Example \ref{example:gaussian-mixture}) or a small $\S$ (Example \ref{example:dumbell-partitioning}). The examples were chosen to illustrate how to estimate the parameters $\S$, $\C$ and $\Ind$ in some concrete settings, and to see that our mathematical definitions do capture our intuitive interpretation of what well-separated components should look like.

\begin{example}[Mixture of two  Gaussians]\label{example:gaussian-mixture}
  Consider a mixture of two standard Gaussian densities on
  $\mathbb{R}$ obtained by shifting the two densities. More precisely,
  let $\rho= \frac{1}{2}\rho_1 + \frac{1}{2}\rho_2$, where

        \begin{equation*}
          \rho_1(x) := \frac{1}{\sqrt{2 \pi}} e^{-x^2/2}  , \quad  \rho_2(x) := \frac{1}{\sqrt{2 \pi}} e^{-(x-\gamma)^2/2}.
        \end{equation*}
 The mixture model is illustrated in Figure \ref{Gaussians}.
\begin{figure}
\includegraphics[width=0.6\textwidth, trim= 0.5cm 0.5cm
0.5cm 0.5cm]{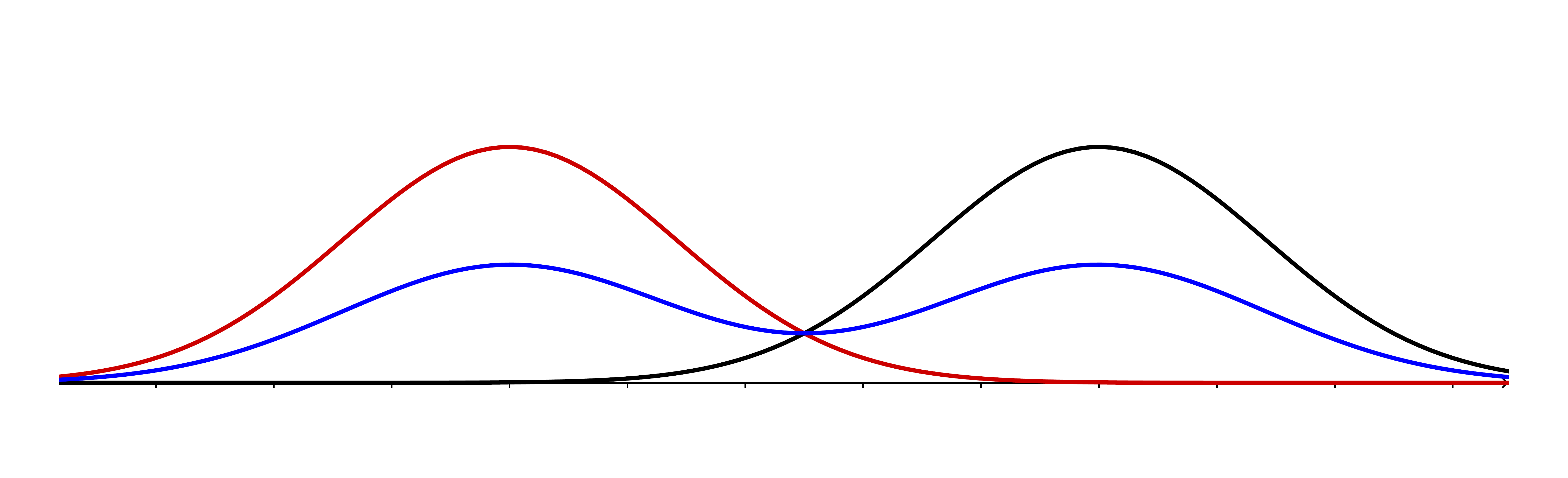}
\put(-180,35){\red $\rho_1$ \nc}
\put(-42,35){ $\rho_2$ }
\put(-150,32){\blue $\rho$ \nc}
\caption{A mixture model with Gaussian components.}
\label{Gaussians}
\end{figure}
We estimate how the overlapping and coupling parameters of the
        model scale with the off-set $\gamma$.
        Intuitively, the components become better separated as
        $\gamma$ increases and we will show that both $\C$ and $\S$
        become small in this regime. However, as $\gamma$ becomes
        small the coupling parameter $\C$ decreases once more while
        the overlapping parameter $\S$ increases. This example
        demonstrates why we require both $\C$ and $\S$ to be small.

Observe that the indivisibility parameter is unaffected by
        shifts in the $x$-axis, i.e., $\Ind$ does not depend on
        $\gamma$. On the other hand, straightforward calculations show
        \begin{equation*}
          \frac{\rho_1'(x)}{\rho_1(x)} =  -x , \quad \frac{\rho_2'(x)}{\rho_2(x)} = -(x- \gamma),
        \end{equation*}
and that
        \begin{equation*}
  \frac{\rho'(x)}{\rho(x)} = -x + \frac{\gamma}{2} \frac{\rho_2(x)}{\rho(x)} = -(x-\gamma) - \frac{\gamma}{2} \frac{\rho_1(x)}{\rho(x)}.
        \end{equation*}
In particular,
        \begin{equation*}
  \frac{\rho'(x)}{\rho(x)} - \frac{\rho_1'(x)}{\rho_1(x)} =  \frac{\gamma}{2} \frac{\rho_2(x)}{\rho(x)} , \quad  \frac{\rho'(x)}{\rho(x)} - \frac{\rho_2'(x)}{\rho_2(x)} = -\frac{\gamma}{2}\frac{\rho_1(x)}{\rho(x)}.
        \end{equation*}
        It then follows from \eqref{Coupling} that
    \begin{equation*}
          \C \leq \C_1+\C_2
          = \frac{\gamma^2}{16}\int_{\mathbb{R}}  \frac{\rho_1 \rho_2}{\rho^2} \left(\rho_1+\rho_2\right) dx
          = \frac{\gamma^2}{8}\int_{\mathbb{R}}  \frac{\rho_1 \rho_2}{\rho} dx
          = \frac{\gamma^2}{8}  \mathcal{S}.
        \end{equation*}
        Thus, the coupling parameter $\C$ is controlled by the
        overlapping parameter $\mathcal{S}$ and $\gamma^2$.  It
        follows that when $\gamma$ is close to zero, the overlapping parameter is close to one since both $\rho_1$ and $\rho_2$ are roughly equal to $\rho$, and that the coupling
        parameter is small of order $\gamma^2$. On the other hand, for large
        $\gamma$, the overlapping parameter decays exponentially
        fast and the coupling parameter decays as well. From an intuitive point of view it is to be expected that for large $\gamma$ the model is well separated. 
      \end{example}

\begin{example}[Partitioning a dumbbell shaped domain]\label{example:dumbell-partitioning}
We consider a probability measure $\rho$ on a dumbbell shaped domain
$\mathcal{M} \subset \R^2$. More precisely define the sets
\begin{align*}
  \mathcal{M}_0 & = \{ (x_1,x_2) : -\ell/2 < x_1  <\ell/2, \qquad |x_2| < \vartheta \},  \\
  \mathcal{M}_1 & = \{ (x_1,x_2) : \ell/2 < x_1 < \ell, \qquad |x_2| < \ell/2 \},  \\
  \mathcal{M}_2 & = \{ (x_1,x_2) : -\ell < x_1 < -\ell/2, \qquad |x_2| < \ell/2 \},
\end{align*}
for $\ell,\vartheta >0$ and define
$\mathcal{M} := \bigcup_{j=0}^2 \mathcal{M}_j\subset \R^2$.  We have in mind
$\vartheta \ll \ell$ so that $\mathcal{M}$ is a dumbbell shaped domain
consisting of two rectangles connected by a thin rectangle.

We assume $|\mathcal{M}| = 1$, i.e. $\ell^2 + 2\ell \vartheta =1$. We take $\rho =1$ on
$\mathcal{M}$ and then define the mixture elements $\rho_k$ by
partitioning $\mathcal{M}$ into subsets and taking the $\rho_k$ to be
a partition of unity for $\mathcal{M}$ with appropriate
normalization. We present two choices of partitions and
demonstrate that for small $\vartheta$, one choice results in
well-separated densities $\rho_k$  while
the other partition does not.

Our first partition aims at cutting the dumbbell
perpendicularly to its axis (see
Figure~\ref{fig:dumbell-partition}(A)). For $0<\veps<l/2$ define the function
$\psi: \R \rightarrow \R $
\[\psi(t) := \begin{cases} 1 & \text{ if } t \leq 0 \\ 1- \frac{2}{\veps^2}t^2 &\text{ if } t \in [0, \veps/2] \\ \frac{2(t-\veps)^2}{\veps^2}& \text{ if }t \in [\veps/2, \veps]  \\
0 & \text{ if } t \geq \veps \end{cases}. \]
We let $\tilde{\rho}_1,\tilde{\rho}_2: \M \rightarrow \R$ ,
\[ \tilde{\rho}_1(x):= \psi(x_1),\quad  \tilde{\rho}_2(x) := \psi(-x_1),\quad x=(x_1, x_2) \in \M. \]

The densities $\rho_1$ and $\rho_2$ are then defined by
\begin{equation*}
  \rho_1(x) := \frac{2\tilde{\rho}_1(x)}{\tilde{\rho}_1(x)+ \tilde{\rho}_2(x)}, \qquad 
  \rho_2(x):=\frac{2\tilde{\rho}_2(x)}{\tilde{\rho}_1(x)+ \tilde{\rho}_2(x)},
\end{equation*}
from where we see that $\rho(x) = \frac{1}{2} \rho_1(x) + \frac{1}{2} \rho_2(x)$
with weights $w_1 = w_2 =\frac{1}{2}$ in the notation of \eqref{DefRho}.

We now show that the above mixture model is well-separated according
to Definition~\ref{def:well-separated-mixture} for an appropriate choice of
$\varepsilon(\vartheta)$ as $\vartheta \to 0$. Let us begin by estimating
the parameters $\mathcal{S}, \mathcal{C}$ and $\Ind$. Following
\eqref{eqn:S2} we have
\begin{align}\label{dumbell-S-parameter}
  \begin{split}
    \mathcal{S} & =  \int_{\mathcal{M}}\rho_1(x) \rho_2(x) dx \\
    & \leq 4
    \int_{\mathcal{M}} \tilde{\rho}_1(x) \tilde{\rho}_2(x) dx \\
    & \le 16\vartheta\veps
  \end{split}
\end{align}
where in the first inequality we have used the fact that
\[  1 \leq \tilde{\rho}_1(x) + \tilde{\rho}_2(x) \leq 2. \]
Thus $\mathcal{S}$ can be made arbitrarily small by either taking $\vartheta$ or $\veps$ to zero.  Furthermore, by \eqref{Coupling} one can see that
\begin{align*}
  \mathcal{C} & = \max_k  \frac{1}{4} 
              \int_{\mathcal{M}} \left|\frac{\nabla \rho_k(x)}{\rho_k(x)}\right|^2 \rho_k(x) dx \leq 4 \vartheta \int_{0}^{\veps}\left \lvert \frac{\psi'(t)}{\psi(t)} \right  \rvert^2 \psi(t) dt \leq  \frac{4\vartheta}{\veps}.
\end{align*}
This implies that if we pick 
\[  \vartheta \ll \veps,\]
then we can make $\C$ as small as we want. It remains  to show that the
indivisibility parameter $\Ind$ remains bounded away from zero for the right choice of parameters. In order to find a lower bound for $\Theta$ it will be convenient to use Cheeger's inequality, which gives us a lower bound for $\Theta$ in terms of a geometric quantity that is much easier to understand and estimate. Namely, by Theorem~\ref{thm:Cheeger},
\begin{equation}
 \Theta_k = \min_{u \perp \mathds 1}  \frac{\int_{\M} |\nabla u |^2 \rho_k dx }{\int_{\M}u ^2\rho_k dx }\geq \frac{ (h(\M, \rho_k))^2}{4}, 
 \label{Ineq:Cheeger}
 \end{equation}
where in the above $\perp$ means orthogonal with respect to the inner product $\langle \cdot, \cdot\rangle_{\rho_k}$ and 
\[ h(\M, \rho_k):= \min_{A \subseteq \M} \frac{\int_{\partial A \cap \M} \rho_k(x)dS(x) }{ \min \left\{ \int_{A}\rho_k dx , \int_{\M \setminus A} \rho_k dx \right\}} . \]
The inequality is due to Cheeger \cite{Cheeger}, and for the convenience of the reader  we provide in the Appendix a sketch of the proof that in particular highlights the specific structure of the Raleigh quotient associated to the operators $\Delta_{\rho_k}$ and $\Delta_\rho$ that makes the inequality work (see Section~\ref{sec:Cheeger}). In particular we emphasize that for Cheeger's inequality to hold, both the numerator and denominator of the Raleigh quotient defining $\Theta_k$ must be weighted with the same measure (in our case both integrals are weighted by $\rho_k$). 

Due to symmetry we have $\Theta=\Theta_1=\Theta_2$. Thanks to Cheeger's inequality we can focus on obtaining a lower bound for $h(\M , \rho_1)$. Now, from the fact that $ 2 \geq \tilde{\rho}_1(x) + \tilde{\rho}_2(x) \geq 1 $ we see that in order to get a lower bound for $h(\M, \rho_1)$ we can alternatively get a lower bound for $h(\M, \tilde{\rho}_1)$. After a moment of reflection, we can conclude that if we can find $C(\vartheta, \veps)$ such that 
\[  Cut(A_t):=  \frac{\int_{\partial A_t \cap \M} \tilde \rho_1(x)dS(x) }{ \min \left\{ \int_{A_t} \tilde \rho_1 dx , \int_{\M \setminus A_t} \tilde \rho_1 dx \right\}}  \geq C(\vartheta, \veps) , \quad \forall t \in [ -l/2 , \veps],  \]
where
\[ A_t := \{ x \in \M \: : \:  x_1 \leq t \} ,\]
then 
\[ h(\M, \rho_1) \geq \min \{ c, C(\vartheta,\veps)  \} ,\]
where $c$ is a constant that does not depend on $\vartheta$ nor $\veps$ and
corresponds to the cost of a cut along the $x_1$ axis. This is because other than the
horizontal cut along the $x_1$ axis,
the only other natural cuts that are viable competitors for minimizing $Cut$ 
(the objective function in the definition of $h(\M,\tilde \rho_1)$) are the ones perpendicular to
the bottleneck of the dumbbell and along the $x_2$ axis. A straight forward calculation shows that the cuts along the $x_1$ axis result in a value of $Cut$ that is bounded away from zero  and so  we focus on finding $C(\vartheta,\veps)$ by analyzing vertical cuts across the bottleneck.
Notice that we do not have to consider the case $t > \veps$ since the density $\tilde{\rho}_1$ is equal to zero for $x=(x_1, x_2)$ with $x_1 >\veps$. In other words we just have to look at non-trivial subsets of $\{ x \in \M \: : \: \tilde{\rho}_1(x) >0 \}$.

For all $t \in [-l/2, \veps]$ we have $\partial A_t \cap \M=\{(x_1,x_2)\in\M\,:\, x_1=t, \,|x_2|<\vartheta\}$, and so we can write
\[Cut(A_t) = \frac{\psi(t)}{ \int_{t}^\veps \psi(s) ds}. \]
Now, when $\veps$ is small, for every $t \in [-l/2, \veps/2]$ we have
\[ Cut(A_t) \geq \frac{1}{2 \int_{t}^{\veps} \psi(s) ds  } \geq 1. \]
On the other hand,  we notice that by the definition of $\psi$ we have
\[ \psi'(t) \int_{t}^\veps \psi(s) ds + (\psi(t))^2 
= \frac{4(\varepsilon-t)^4}{3\varepsilon^4}\geq 0, \quad \forall t \in [\veps/2, \veps], \]
implying that the function $Cut(A_t)$ is increasing between $\veps/2$ and $\veps$. In particular, it follows that for all $t \in [\veps/2, \veps]$ we have
\[  Cut(A_t) \geq Cut(A_{\veps/2}) = \frac{1}{2 \int_{\veps/2}^\veps \psi(s) ds } \geq \frac{1}{\veps}.\]
Hence $C(\vartheta,\varepsilon)=\min\{1,1/\varepsilon\}$.
We conclude that provided  $\veps\leq 1$ we have that
\[ \Theta \geq c, \]
for some positive constant that does not depend on $\veps$ or $\vartheta$. The bottom line is that the model is well-separated as long as 
$\vartheta \ll \veps$ and $\veps$ is
sufficiently small.

One can then consider a partition that aims at cutting the dumbbell along its axis (see Figure~\ref{fig:dumbell-partition}(B)). In particular we can introduce analogue densities $\rho_1$ and $\rho_2$ smoothening out the indicator functions for the sets in the new partitioning (in this case the smoothening happens in the $x_2$ coordinate). Then using similar calculations as before we can
show that the overlapping parameter can be made arbitrarily small (by selecting the parameter $\veps$ to be small), however, it will not be possible to control the indivisibility parameter from below as $\vartheta \rightarrow 0$. In fact,  we expect the second eigenvalue of $\Delta_{\rho_k}$ to go to
zero with $\vartheta$ following~\cite{anne1995note}. Moreover, the coupling parameter will not be controlled by $\vartheta$ in this case, and in fact as $\veps \rightarrow 0$ it will blow up. Therefore according to our definition this mixture model is not well-separated. 

Notice that this example has made explicit the fact that the notion of well separated mixture model does not depend on the density $\rho$, but on the actual decomposition as a mixture model. The example has also exhibited that our mathematical notion of well-separated mixture model does capture our intuition.
\end{example}


        \begin{figure}[htp]
          \centering
          \begin{subfigure}[b]{.49 \textwidth}
          \includegraphics[height=.8\textwidth, trim
          =2cm 4cm 2cm 4cm, clip]{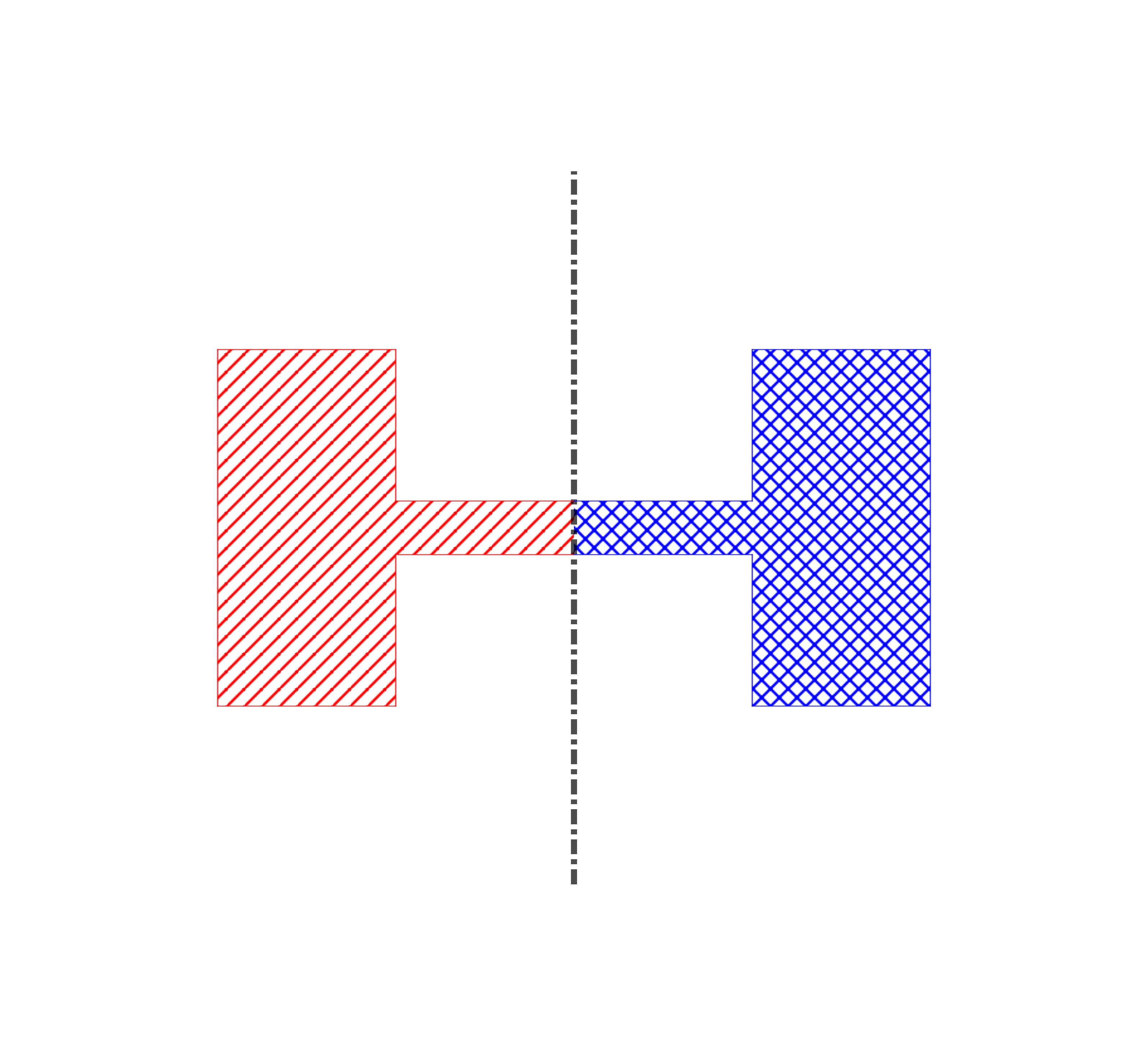}
          \caption{}
          \end{subfigure}
          \begin{subfigure}[b]{.49\textwidth}
          \includegraphics[height=.8 \textwidth, trim
          =2cm 4cm 2cm 4cm, clip]{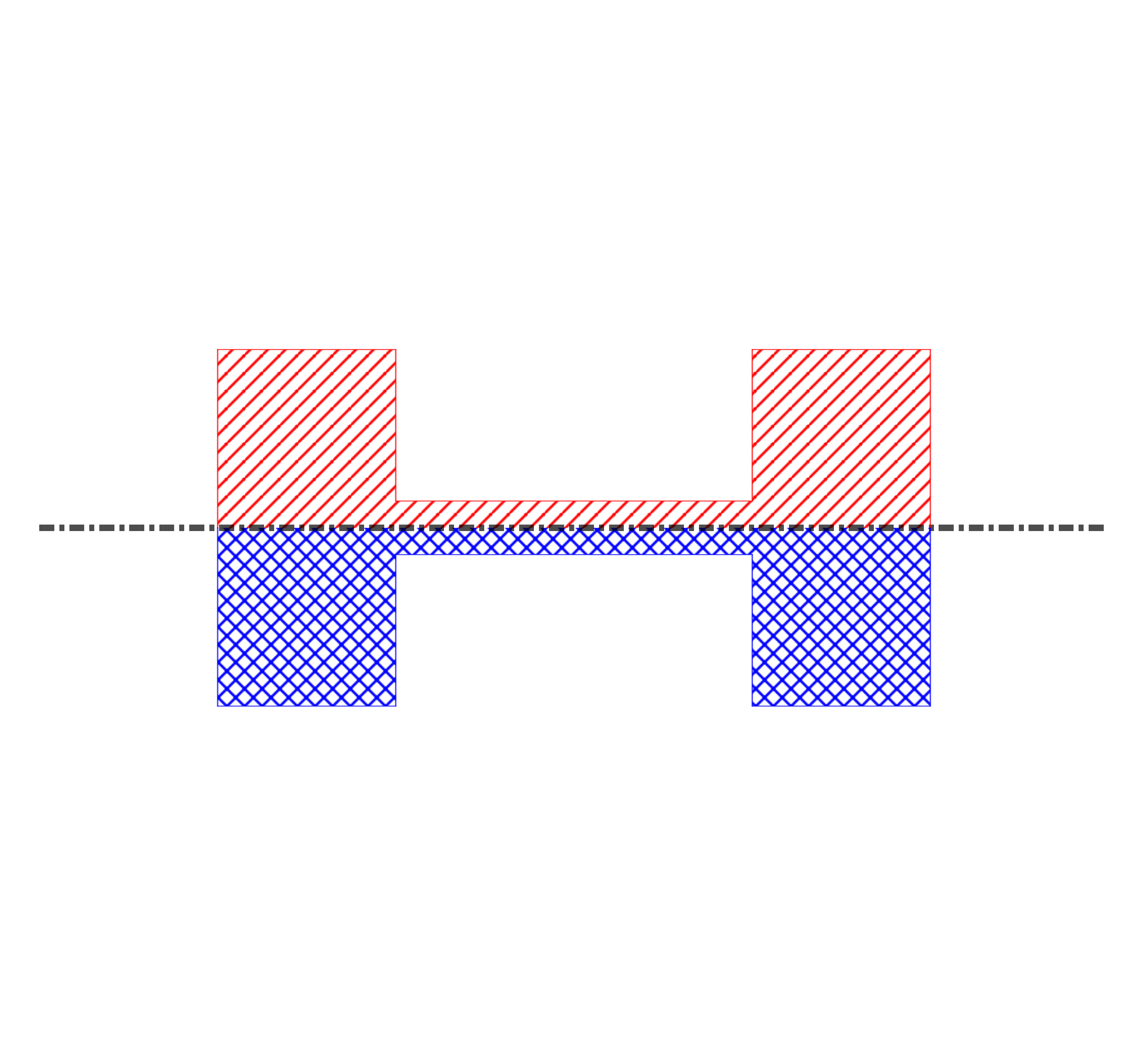}
          \caption{}
          \end{subfigure}
          \caption{Two partitioning strategies
            for the dumbbell shaped domain of Example 2. The partition
            (A)  leads to
            well-separated clusters in the limit $\vartheta\to 0$, while the partition (B)
            does not. Clearly, the subsets in (B) can be further split into 
          meaningful components.}
          \label{fig:dumbell-partition}
        \end{figure}

\subsection{Extensions and generalizations of main results}  

As outlined in Section~\ref{sec:Proofs} the proofs of our main results are divided into two main parts.

\begin{enumerate}
\item We prove that the measure $F_{\sharp} \nu$ has an orthogonal cone structure provided the model is well-separated.
\item We show that after an orthogonal transformation, the measures $F_{\sharp} \nu$ and $F_{n \sharp} \nu_n$ are close to each other in the Wasserstein sense with high probability.
\end{enumerate}

We  emphasize that for part (1) we do not need to assume that $\M$ has compact support, nor that the density $\rho$ is bounded bellow and above by positive constants. We only need Assumptions \ref{Assump1}.

In contrast, for  part (2), we impose the extra conditions in Assumptions \ref{assumption-on-rho-contrast}  to guarantee that  the results of \cite{BIK,Moritz} on the spectral convergence of graph Laplacians are applicable. We further remark that any improvement or extension of results like those in \cite{BIK,Moritz} can be directly incorporated in the analysis that we present in this paper. For the sake of concreteness, here are some interesting directions to consider.

\subsubsection{ Extensions to unbounded domains and $k$-NN graphs}
\label{UnboundedDomains}

In this paper Theorem \ref{TheoremTranps} links the graph Laplacian
$\Delta_n$ to the differential operator $\Delta_\rho$.  Assumption \ref{assumption-on-rho-contrast} is only used when proving Theorem \ref{TheoremTranps}, and in particular to show the existence of transport maps $T_n$ between $\nu$ and $\nu_n$ satisfying 
\[ \lVert T_n - Id \rVert_{\infty}:= \esssup_{x \in \M}| T_n(x) -x |  \rightarrow 0,\]
at an explicit rate in terms of $n$. For an unbounded manifold $\M$, \textit{any} transport map $T_n$ pushing forward $\nu$ into $\nu_n$, must transport mass over arbitrary long distances (even a tiny bit of mass transported over a large distance already results in a large $\infty$-OT distance). It is therefore of relevance to investigate other forms of establishing the discrete-continuum link that do not involve using $\infty$-OT maps; this is the topic of current investigation.

It is also of relevance to extend our analysis to settings where different graph constructions are used to extract the coarse structure of the point cloud.  In this paper we have focused on proximity graphs constructed using a kernel $\eta$ and a connectivity parameter $\veps>0$, but other graph constructions are perhaps more popular among practitioners and enjoy higher regularity properties; such is the case of $k$ nearest neighbors ($k$-NN) graphs. Results quantifying spectral convergence rates for $k$-NN graph Laplacians are largely missing in the literature, and it  is interesting to extend the analysis showed in this paper to the $k$-NN setting. In \cite{kNNNGT}, variational techniques have been used to study the statistical consistency of optimization problems defined on $k$-NN graphs.

\subsubsection{Different weighted versions of the Laplacian}
\label{sec:differentweights}

The kernelized Laplacian considered in this paper is just one of many operators that  can be used to
construct the embedding $F$. In light of the notion of \textit{diffusion maps} introduced in \cite{Coifman1}, it seems reasonable to ask what is the effect of different graph Laplacian
normalizations on the geometry of the embedded data. For example, one may consider a family of renormalizations, where for a given $\gamma \in [0,1] $, we let
\begin{equation*}
 (\W_n)_{ij} :=  \frac{\eta_\veps(|\x_i-\x_j|)}{\tilde d_i^\gamma \tilde d_j^\gamma }, \quad \tilde d_i:= \sum_{j=1}^n \eta_\veps(|\x_i-\x_j|)\,.\footnote{Formally, one could take any $\gamma \in \R$, but we  follow here the choice of $\gamma$ as  in \cite{Coifman1}.} 
\label{eqn:CoiffmanWeights}
 \end{equation*}
The weights $(\W_n)_{ij}$ induce the family of normalized graph Laplacians 
\[ \Delta_n^\gamma:= I - \D_n^{-1} \W_n\,\] 
where $\D_n=diag(d_i)$, $d_i=\sum_j (\W_n)_{ij}$, denotes the degree matrix of the weight matrix $\W_n$.

Intuitively, the parameter $\gamma$  controls the effect of the density $\rho$ on
the clustering: for $\gamma$ close to one, the effect of $\rho$ is minimal, and the limiting clustering is influenced only by the geometry of $\M$; on the other hand, when $\gamma$ is close to zero, the effect of the ground-truth density $\rho$ is maximal (see \cite{Coifman1}). This observation can be made rigorous using the ideas presented in the work \cite{GTSSpectralClustering} which when applied would show that after appropriate rescaling, the spectrum of the graph Laplacian $\Delta^\gamma_n$ approximates that of the formally defined differential operator 
\[\Delta_\rho^\gamma u:= -\frac{1}{\rho^{2(1-\gamma)}} \text{div}( \rho^{2(1-\gamma)}\nabla u ). \]
Furthermore, following the discussion in Section \ref{App:A} one can bootstrap the ideas presented in \cite{Moritz} and obtain quantitative error estimates relating the spectra of $\Delta_n^\gamma$ and $\Delta_\rho^\gamma$ analogue to the ones that appear in Theorem \ref{TheoremTranps}. We notice that the limiting differential operator $\Delta_\rho^\gamma$ is of the same type as the one we have considered in this paper, but where now we think of the underlying density function as being proportional to $\rho^{2(1-\gamma)}$. That is, we can introduce the density 
	\[\widetilde \rho (x):= \frac{\rho^\theta(x)}{\int_\M\rho^\theta(y)\,dy},\]
	and notice that \[ \Delta_\rho^{\gamma}=\Delta_{\tilde\rho},\]  for $\theta=2(1-\gamma)$. Based on the analysis presented in this paper, it is then possible to describe the geometry of the graph Laplacian embeddings constructed from $\Delta^\gamma_n$, assuming that the modified density $\widetilde{\rho}$ admits a representation as a well-separated mixture model.
	 
	An immediate question that arises is to understand whether there is an interesting criterion under which one can choose the value of $\gamma$ that produces the ``best" clustering for a given data set. Alternatively, instead of looking at a single value of $\gamma$, it maybe worth working with the full ensemble of Laplacian embeddings (for all values of $\gamma$), and determine how to use it as best as possible. The formulation and analysis of these questions are the topic of current investigation.

\begin{remark}
Note that the so called \textit{random walk} graph Laplacian \cite{art:ShiMalik00NCut} coincides with $\Delta_n^{0}$ (i.e.  $\gamma=0$). The limiting differential operator then takes the form
\[ \Delta_\rho^0 = - \frac{1}{\rho^2}\divergence(\rho^2 \nabla u ).   \] 
\end{remark}

\begin{remark}
Asymptotically, the kernelized graph Laplacian $\Delta_n$ studied here coincides with $\Delta_n^{1/2}$ (i.e.  $\gamma=1/2$). In particular, there is no reason to work with $\Delta^{1/2}_n$ (i.e. renormalizing twice).  In addition, the  limiting differential operator $\Delta_\rho^{1/2}= \Delta_\rho$ has the following properties. First,
\[ \langle  \Delta_\rho u , v \rangle_\rho= \int_{\M} \nabla u \cdot \nabla v \rho dx, \]
is linear in $\rho$. Secondly,
\[ \langle u ,v \rangle_\rho= \int_{\M} uv \rho dx, \] 
is also linear in $\rho$, and $\Delta_\rho$ is self-adjoint with respect to $\langle \cdot, \cdot \rangle_\rho$. Thirdly, under fairly general conditions, such as when $\rho$ is
a well-separated mixture, $\Delta_\rho$ has a spectral gap after the $N$-th eigenvalue.

\end{remark}

\section{Proofs of main results}
\label{sec:Proofs}


\subsection{Stability of orthogonal cone structures.}
\label{sec:Conic}

In this section we prove some basic results on the stability of orthogonal cone structures. The first result follows immediately from the characterization of weak convergence of probability measures via Portmanteau's theorem \cite[Theorem~8.4.7]{bogachev1}.
\begin{proposition}
\label{Prop:conestruc1}
Let $\left\{ \mu_n \right\}_{n \in \N}$ be a sequence of Borel probability measures on $\R^k$ that converge weakly to a measure $\mu \in \mathcal{P}(\R^k)$. If $\mu $ has an orthogonal cone structure with parameters $(\sigma, \delta,r)$, then for every $\delta >\eps>0$, there exists $K \in \N$ such that for all $n \geq K$ , $\mu_n$ has a orthogonal cone structure with
parameters $(\sigma, \delta + \eps, r)$.
\end{proposition}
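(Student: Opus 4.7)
The plan is to apply the lower-semicontinuity half of Portmanteau's theorem to the union of cones witnessing the orthogonal cone structure of the limit measure $\mu$. The crucial observation is that each set
\[ C(e_j,\sigma,r) = \left\{ z \in \R^k \: : \: \frac{z \cdot e_j}{|z|} > \cos(\sigma), \; |z| > r \right\} \]
is an \emph{open} subset of $\R^k$, since it is defined by two strict inequalities involving continuous functions of $z$ (on $\R^k\setminus\{0\}$, which is the only place where the first inequality is meaningful, and the condition $|z|>r$ ensures $z\neq 0$). Therefore the union $U := \bigcup_{j=1}^{k} C(e_j,\sigma,r)$, where $e_1,\dots,e_k$ is the orthonormal basis provided by the hypothesis that $\mu$ has an orthogonal cone structure with parameters $(\sigma,\delta,r)$, is open.

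By the assumption on $\mu$ we have $\mu(U) \geq 1-\delta$. Since $\mu_n \rightharpoonup \mu$ weakly, Portmanteau's theorem \cite[Theorem~8.4.7]{bogachev1} gives
\[ \liminf_{n \to \infty} \mu_n(U) \;\geq\; \mu(U) \;\geq\; 1-\delta. \]
Consequently, for every $\eps > 0$ there exists $K \in \N$ such that $\mu_n(U) \geq 1 - \delta - \eps$ for all $n \geq K$. Using the same orthonormal basis $e_1,\dots,e_k$ as for $\mu$, this is precisely the defining inequality for $\mu_n$ to have an orthogonal cone structure with parameters $(\sigma,\delta+\eps,r)$, provided one also checks that $\sigma < \pi/4$ (inherited from $\mu$) and $\delta + \eps < 1$ (which holds whenever $\eps < 1-\delta$; for $\eps \geq 1-\delta$ the statement is vacuous since every probability measure trivially satisfies the cone inequality with right-hand side $\leq 0$).

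There is no real obstacle here: the whole content of the proposition is packaged into the openness of the cones together with the standard Portmanteau inequality. The only subtlety worth flagging is ensuring that the same basis $\{e_j\}$ works for all sufficiently large $n$, which is automatic because we do not require that $\mu_n$ witness its cone structure via a basis that depends on $n$ in any nontrivial way; we simply transfer the basis from $\mu$ and exploit the lower semicontinuity of mass on the fixed open set $U$.
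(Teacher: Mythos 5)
Your proof is correct and follows exactly the paper's argument: observe that the union of cones is open and apply the lower-semicontinuity part of Portmanteau's theorem to that fixed open set. The additional remarks on openness of the cones and on the parameter constraints are fine but not needed beyond what the paper already does.
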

\begin{proof}
Consider the open set $U:= \cup_{j=1}^{k}C(e_j,\sigma, r )$, where the sets $C(e_j,\sigma,r)$ are as in Definition~\ref{def:contcones} on the orthogonal cone structure property for $\mu$. Since $\mu_n \converges{w} \mu$, Portmanteau's theorem implies that
\[ \liminf_{n \rightarrow \infty} \mu_n( U) \geq \mu(U). \]
In particular, this means that for every $\delta>\veps>0$, there exists a $K$ such that if $n \geq K$ then,
\[  \mu_n(U) \geq \mu(U) -\veps \geq 1-\delta - \veps.\]
\end{proof}

An immediate corollary of the previous proposition is the following.

\begin{corollary}\label{prop:contdiscrcones}
If a probability measure $\mu$ has an orthogonal cone structure with parameters $(\sigma, \delta, r)$ and $\mu_n$ is the empirical measure associated to $n$ i.i.d. samples from $\mu$, then with probability one, there exists $K=K(\veps, \delta) \in \N$ such that for every $n \geq K$, $\mu_n$ has a $(\sigma, \delta + \veps, r)$-cone structure. 
\end{corollary}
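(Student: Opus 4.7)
The plan is to deduce the corollary from Proposition~\ref{Prop:conestruc1} by combining it with the almost sure weak convergence of the empirical measure to the underlying measure. The key observation is that Proposition~\ref{Prop:conestruc1} is a deterministic stability statement: given any sequence of probability measures converging weakly to $\mu$, the cone structure of $\mu$ is transferred to the tail of the sequence, with an arbitrarily small loss $\veps$ in the mass parameter. So the problem reduces to producing a sequence $\{\mu_n\}$ that almost surely converges weakly to $\mu$.

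The classical tool for this is Varadarajan's theorem (the fundamental theorem of statistics / the measure-theoretic strong law), which states that for i.i.d.\ samples $\x_1,\dots,\x_n\sim\mu$ on a separable metric space such as $\R^k$, the empirical measure $\mu_n = \tfrac{1}{n}\sum_{i=1}^n \delta_{\x_i}$ converges weakly to $\mu$ with probability one. Thus I would first fix an event $\Omega_0$ of probability one on which $\mu_n \converges{w} \mu$ as $n\to\infty$.

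On this event I would apply Proposition~\ref{Prop:conestruc1} directly: for any prescribed $\veps\in(0,\delta)$ there exists $K=K(\veps,\delta,\omega)$ (depending on the sample path $\omega\in\Omega_0$) such that for every $n\ge K$ the measure $\mu_n$ satisfies
\[
\mu_n\!\left(\bigcup_{j=1}^k C(e_j,\sigma,r)\right) \ge 1-\delta-\veps,
\]
using the \emph{same} orthonormal basis $e_1,\dots,e_k$ that witnesses the orthogonal cone structure of $\mu$. This is exactly the statement that $\mu_n$ has an orthogonal cone structure with parameters $(\sigma,\delta+\veps,r)$.

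There is no real obstacle here beyond invoking the correct convergence theorem; the mild subtlety is that $K$ is random (it depends on the sample path), which is why the conclusion is phrased as ``with probability one, there exists $K$'' rather than giving a deterministic bound on $K$. A quantitative version of this corollary would require concentration inequalities for $\mu_n(U)$ around $\mu(U)$ on the open set $U=\bigcup_j C(e_j,\sigma,r)$ (e.g., DKW or Bernstein-type bounds), but the statement as written needs only the qualitative almost sure convergence.
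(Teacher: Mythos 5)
Your proof is correct and is exactly the argument the paper intends: the corollary is stated there as an immediate consequence of Proposition~\ref{Prop:conestruc1}, with the implicit step being precisely the almost-sure weak convergence of empirical measures (Varadarajan's theorem) that you supply. Your remark that $K$ depends on the sample path is a fair reading of the statement as written.
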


The idea is now to improve the previous asymptotic result and instead study the stability of orthogonal cone structures under small perturbations of a measure; we use the Wasserstein distance to measure the distance between different probability measures. We recall its definition.

\begin{definition}
Let $\mu_1, \mu_2$ be two probability measures on $\R^k$ with finite second moments. We define their Wasserstein distance by 
\[ (W_{2}(\mu_1, \mu_2))^2 := \min_{\pi \in \Gamma(\mu_1, \mu_2)}  \int_{\R^k \times \R^k} |x-y|^2 d \pi(x,y),   \]
where $\Gamma(\mu_1, \mu_2)$ stands for the set of transportation plans between $\mu_1$ and $\mu_2$, that is, the set of probability measures in $\mathcal{P}(\R^k \times \R^k)$ with first and second marginals 
equal to $\mu_1$ and $\mu_2$ respectively. 
\end{definition}

\begin{proposition}
Let $\mu_1, \mu_2  \in \mathcal{P}(\R^k)$ and suppose that $\mu_1$ has an orthogonal cone structure with parameters $(\sigma, \delta, r)$, where $\sigma < \pi/4$. Let $s,t>0$ be such that
\begin{equation}
\frac{r t \sin(s) }{\sqrt{k}}  \geq W_2(\mu_1, \mu_2),  
\label{Relab}
\end{equation}
and such that $\sigma + s < \pi /4$. Then, $\mu_2$ has an orthogonal cone structure with parameters $(\sigma +s  , \delta +t^2 , r(1-\sin(s))$. 
\label{LemmaConeWasserstein}
\end{proposition}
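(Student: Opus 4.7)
The plan is to combine an optimal transport step with a geometric perturbation argument. Let $\pi \in \Gamma(\mu_1,\mu_2)$ attain the infimum defining $W_2(\mu_1,\mu_2)$, and let $e_1,\dots,e_k$ be an orthonormal basis witnessing the cone structure of $\mu_1$. I will set
\[
A := \bigcup_{j=1}^{k} C(e_j,\sigma,r), \qquad B := \bigcup_{j=1}^{k} C(e_j,\sigma+s, r(1-\sin s));
\]
by hypothesis $\mu_1(A) \geq 1-\delta$, and the condition $\sigma+s < \pi/4$ ensures that both the original cones making up $A$ and the enlarged cones making up $B$ are pairwise disjoint, so the analysis localizes to a single cone at a time. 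The goal is to conclude $\mu_2(B) \geq 1-\delta-t^2$.

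The core of the argument is a pointwise geometric implication: with the threshold $\alpha := r\sin(s)/\sqrt{k}$, I would show that $x \in C(e_j,\sigma,r)$ together with $|x-y|\leq \alpha$ forces $y \in C(e_j,\sigma+s, r(1-\sin s))$. The radial condition is immediate: $|y| \geq |x|-|x-y| \geq r - \alpha \geq r(1-\sin s)$. For the angular condition I would decompose $x = (x\cdot e_j)e_j + x_\perp$ and $y = (y\cdot e_j)e_j + y_\perp$, note that $x\in C(e_j,\sigma,r)$ gives $|x_\perp| \leq |x|\sin\sigma$, and then estimate
\[
|y_\perp| \leq |x_\perp| + |(y-x)_\perp| \leq |x|\sin\sigma + |x-y|,
\]
which I compare with $|y|\sin(\sigma+s) \geq (|x|-|x-y|)\sin(\sigma+s)$; using $|x|\geq r$ together with the smallness of $|x-y|$ gives $|y_\perp| \leq |y|\sin(\sigma+s)$, equivalent to $y\cdot e_j/|y| \geq \cos(\sigma+s)$.

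Once the geometric implication is in hand, the probabilistic step is a single application of Markov's inequality to the plan $\pi$:
\[
\pi\bigl(\{(x,y) : |x-y| > \alpha\}\bigr) \leq \frac{W_2(\mu_1,\mu_2)^2}{\alpha^2} \leq t^2,
\]
where the last inequality is exactly the hypothesis \eqref{Relab} after squaring and rearrangement. Since the first marginal of $\pi$ is $\mu_1$, it follows that
\[
\mu_2(B) \geq \pi\bigl(\{(x,y) : x\in A,\ |x-y|\leq \alpha\}\bigr) \geq \mu_1(A) - t^2 \geq 1-\delta-t^2,
\]
which is the claimed orthogonal cone structure for $\mu_2$ with parameters $(\sigma+s,\delta+t^2, r(1-\sin s))$.

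The main technical obstacle lies in the geometric step: one must verify that the threshold $\alpha = r\sin(s)/\sqrt{k}$ produced by Markov's inequality is still large enough to simultaneously absorb the radial shrinkage and the angular widening of the cone, and that the coordinate-wise decomposition of $y_\perp$ interacts correctly with the definition of $C(e_j,\sigma,r)$. Once this pointwise implication is established with the correct constant, the Wasserstein-to-measure translation is a short one-line calculation.
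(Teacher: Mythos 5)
Your proposal is structurally the same as the paper's proof: take the optimal plan $\pi$, use the geometric separation between the original cones and the enlarged cones, and apply a Chebyshev-type bound to the plan to control the mass that escapes. The only bookkeeping difference is that the paper works cone by cone with the full separation distance $r\sin(s)$, losing $t^2/k$ of mass per cone via $r^2\sin^2(s)\,\pi\bigl(C_j\times(\R^k\setminus\tilde C_j)\bigr)\le W_2^2$, whereas you use a single global threshold $\alpha=r\sin(s)/\sqrt{k}$ and lose $t^2$ once; the two are equivalent.

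One point to tighten: your triangle-inequality verification of the pointwise implication is lossier than needed and, as written, does not close for small $k$. Comparing $|y_\perp|\le |x|\sin\sigma+|x-y|$ with $|y|\sin(\sigma+s)\ge(|x|-|x-y|)\sin(\sigma+s)$ requires $|x-y|\le |x|\,\frac{\sin(\sigma+s)-\sin\sigma}{1+\sin(\sigma+s)}$, and with $\sigma+s<\pi/4$ the right-hand side is only guaranteed to be about $(\sqrt2-1)\,|x|\sin(s)$; your threshold $\alpha=r\sin(s)/\sqrt k$ sits below this only when $\sqrt k\ge \sqrt2+1$, i.e.\ $k\ge 6$, while in this paper $k=N$ is typically small. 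The fix is the exact computation the paper invokes: for $x\in C(e_j,\sigma,r)$ the distance to the angular boundary of the wider cone is $|x|\sin\bigl(\sigma+s-\theta_x\bigr)\ge r\sin(s)$ and the distance to the sphere of radius $r(1-\sin s)$ is at least $r\sin(s)$, so $\dist\bigl(C(e_j,\sigma,r),\,\R^k\setminus C(e_j,\sigma+s,r(1-\sin s))\bigr)\ge r\sin(s)\ge\alpha$ for every $k\ge1$. With that replacement your argument is complete.
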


\begin{proof}
  Denote by $C_1, \dots, C_k$ the ``cones" associated to $\mu_1$ and let $e_1, \dots, e_k$ be the orthonormal vectors they are centered at. For $s, t>0$ satisfying \eqref{Relab} and
  such that $s+ \sigma < \pi/4$, define  
\[  \tilde{C}_j := \left\{ z \in \R^k \:  : \:        \frac{ z \cdot e_j}{|z|} > \cos(\tilde{\sigma})  , \quad |z| > \tilde{r}       \right\},\]
where $\tilde{\sigma} := \sigma + s$ and $\tilde{r} := r(1-\sin(s))$.  
Let $\pi \in \Gamma(\mu_2, \mu_1)$ be an optimal transportation plan between $\mu_2$ and $\mu_1$ (which exists by \cite[Proposition 2.1]{Villani}). That is, 
\[  \int_{\R^k \times \R^k} |x-y|^2 d \pi(x,y) = \left(W_2(\mu_1, \mu_2)\right)^2 .\]
First, observe that from the definition of $\tilde{r}$, simple trigonometry shows that 
\[ \min_{ v \in C_k , w \in \partial \tilde{C}_k   } |v - w|  = r \sin(s). \]
From this, it follows that 
\[   r^2 \sin^2(s) \pi(C_j,\R^k \setminus \tilde{C}_j)    \leq \int_{C_j\times (\R^k \setminus \tilde{C}_j)}  |x-y|^2 d\pi(x,y) \leq   \left(W_2(\mu_1, \mu_2)\right)^2.   \]
Hence, for every $j=1,\dots,k$,
\begin{align*}
\begin{split}
\mu_2 (\tilde{C}_j) & \geq \pi(C_j,\tilde{C}_j)
\\& = \pi(C_j,\R^k) - \pi(C_j, \R^k \setminus \tilde{C}_j )
\\& \geq \mu_1(C_j) - \frac{\left( W_2(\mu_1, \mu_2) \right)^2}{r^2 \sin^2(s)}
\\& \geq \mu_1(C_j)  - \frac{t^2}{k}.
\end{split}
\end{align*}
The assumption $\sigma < \pi/4$ and $\sigma+s<\pi/4$ implies that the sets $C_1,\dots, C_k$ and
respectively $\tilde C_1,\dots, \tilde C_k$ are pairwise disjoint and thus
\[  \mu_2 \left( \bigcup_{j=1}^{k} \tilde{C}_j   \right)  \geq \mu_1\left( \bigcup_{j=1}^{k} C_j \right)  - t^2  \geq 1-(\delta + t^2).  \]
\end{proof}

We finish this section by recalling that a map $T : \R^k \rightarrow \R^k$ with $\mu_2 = T_{\sharp} \mu_1$ (i.e. $\mu_2$ is the pushforward of $\mu_1$ by $T$) is called a \textit{transportation map} between $\mu_1$ and $\mu_2$. Such a map induces a transportation plan $\pi_T \in \Gamma(\mu_1, \mu_2)$ given by
\begin{equation*}
\pi_T := (Id \times T)_{\sharp} \mu_1,
\label{MapsPlans} 
\end{equation*}
where the map $Id \times T$ is given by $x \in \R^k \mapsto (x,T(x)) \in \R^k \times \R^k$. In particular, for any such  $T$, we have
\begin{equation}
(W_2(\mu_1, \mu_2))^2 \leq \int_{\R^k \times \R^k} |x-y|^2 d \pi_T(x,y) = \int_{\R^k} |x-T(x)|^2 d \mu_1(x).
\label{WassersteinOptimal}
\end{equation}

\subsection{Proof of Theorem \ref{mainTheorem}}
\label{sec:ProofsMain}

In what follows we use $u_1, \dots, u_N$ to represent the first $N$ eigenfunctions of $\Delta_{\rho}$ and denote by $U$ their span. Normalized in $L^2(\rho)$, the functions $u_1, \dots, u_N$ form an orthonormal basis for $U$.  We use $\PN: L^2(\rho) \rightarrow U$ to denote the projection onto $U$. Denote by $Q$ the span of the functions $q_1, \dots, q_N$. Our first proposition quantifies how close
are the functions $q_k$ to their projections onto $U$.

 \begin{proposition}
  \label{ProjectionsError}
  For every $k=1, \dots, N$ we have
  \begin{equation}
  \frac{1}{w_k}\lVert q_k - \PN(q_k) \rVert^2_\rho \leq \frac{\mathcal{C}}{\lambda_{N+1}},
  \end{equation}
  where $\PN$ stands for the projection onto $U$ the span of the first $N$ eigenfunctions of $\Delta_{\rho}$ and $\lambda_{N+1}$ is the $(N+1)$-st eigenvalue of $\Delta_{\rho}$, where the
  eigenvalues are assumed to be indexed in increasing order.
 \end{proposition}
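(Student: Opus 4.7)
The plan is to combine two standard facts about the self-adjoint operator $\Delta_\rho$ on $L^2(\rho)$: the variational (min-max) characterization of $\lambda_{N+1}$, which says that
\[\langle \Delta_\rho v, v\rangle_\rho \geq \lambda_{N+1}\,\|v\|_\rho^2 \qquad \forall v \in U^\perp,\]
and the identity $\langle \Delta_\rho v, v\rangle_\rho = \int_\M |\nabla v|^2 \rho\,dx$ obtained by integration by parts on $\M$ (boundaryless, by Assumption~\ref{Assump1}). Applying this to $v := q_k - \PN(q_k)\in U^\perp$ yields
\[\lambda_{N+1}\,\|q_k - \PN(q_k)\|_\rho^2 \;\leq\; \int_\M |\nabla(q_k - \PN(q_k))|^2\,\rho\,dx.\]

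Next, I would argue that the Dirichlet form $v\mapsto \int_\M|\nabla v|^2\rho\,dx$ is simultaneously diagonalized with the $L^2(\rho)$-inner product by the eigenbasis $\{u_j\}$ of $\Delta_\rho$, so $\PN(q_k)$ and $q_k - \PN(q_k)$ are orthogonal with respect to both forms. In particular,
\[\int_\M |\nabla(q_k - \PN(q_k))|^2\,\rho\,dx \;\leq\; \int_\M |\nabla q_k|^2\,\rho\,dx.\]

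The remaining step is a direct calculation of the right-hand side. Since $q_k = \sqrt{w_k\rho_k/\rho}$, one computes
\[\nabla q_k = \frac{q_k}{2}\left(\frac{\nabla\rho_k}{\rho_k} - \frac{\nabla\rho}{\rho}\right),\]
so that
\[|\nabla q_k|^2\,\rho = \frac{w_k\rho_k}{4}\left|\frac{\nabla\rho_k}{\rho_k} - \frac{\nabla\rho}{\rho}\right|^2,\]
and integrating over $\M$ gives $\int_\M |\nabla q_k|^2\rho\,dx = w_k\,\mathcal{C}_k \leq w_k\,\mathcal{C}$ by the definition~\eqref{Coupling} of $\mathcal{C}_k$ and the convention that the integrand vanishes where $\rho_k=0$. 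Chaining the three inequalities and dividing by $w_k\lambda_{N+1}$ produces the claim.

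I expect no serious obstacle: the argument is essentially a textbook Rayleigh-quotient bound, and the main thing to be careful about is justifying the integration by parts and the orthogonality in the Dirichlet form, both of which follow from the spectral theorem for $\Delta_\rho$ guaranteed by Assumption~\ref{Assump1}. The only mildly delicate point is handling regions where $\rho_k$ may vanish, but the stated convention in \eqref{Coupling} (together with $q_k = 0$ on such a region, making $\nabla q_k$ zero there in the sense of the above formula) makes this harmless.
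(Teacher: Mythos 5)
Your proof is correct and follows essentially the same route as the paper: the paper computes $\langle \Delta_\rho q_k, q_k\rangle_\rho = \int_\M|\nabla q_k|^2\rho\,dx = w_k\mathcal{C}_k$, expands $q_k$ in the eigenbasis of $\Delta_\rho$, and drops the nonnegative low-mode contribution to get $w_k\mathcal{C}_k \geq \lambda_{N+1}\lVert q_k - \PN(q_k)\rVert_\rho^2$, which is exactly the content of your Rayleigh-quotient bound combined with the observation that the Dirichlet form decreases under removal of the $U$-component. The only difference is presentational (explicit eigenbasis expansion versus invoking min-max plus simultaneous diagonalization), not mathematical.
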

 \begin{proof}
  A direct computation using the definition of $q_k$ shows that 
  \[ \langle \Delta_{\rho} q_k , q_k \rangle_{\rho}= \int_{ \M} \lvert \nabla q_k \rvert^2 \rho(x) dx   = w_k\mathcal{C}_k,\]
  where $\C_k$ is as defined in \eqref{Coupling}. On the other hand, we can write $q_k$ in the orthonormal basis of eigenfunctions $\{u_1, u_2, \dots \}$ of $\Delta_{\rho}$ as
  \[ q_k = \sum_{l=1}^\infty a_{lk}u_l,   \]
  for some coefficients $\{a_{lk} \}_{l\in \N}$. Using this representation we can alternatively write $\langle \Delta_{\rho} q_k , q_k \rangle_{\rho}$ as
  \[  \langle \Delta_{\rho} q_k , q_k \rangle_{\rho} = \sum_{l=1}^N a_{lk}^2 \lambda_l + \sum_{l=N+1}^\infty a_{lk}^2 \lambda_{l}.  \] 
  We deduce that
  \[ w_k\mathcal{C} \geq w_k \mathcal{C}_k \geq \lambda_{N+1}\sum_{l=N+1}^\infty a_{lk}^2  =  \lambda_{N+1} \lVert q_k - \PN(q_k) \rVert^2_\rho.\]
 \end{proof}

We now focus on getting lower bounds for $\lambda_{N+1}$ in terms of the parameters of the mixture model. In particular, we show that when $\mathcal{S}$ and $\mathcal{C}$ are small in relation to $\Ind$ then $\lambda_{N+1}$ is large. We start with two preliminary results.


\begin{lemma}\label{lem:qjnormbound}
For every $j\in \{1,\dots, N \}$,
\[ 1 - \mathcal{S}\leq  \langle q_j , q_j\rangle_{\rho_j} \leq 1 + \mathcal{S}. \]
\label{normqm}
\end{lemma}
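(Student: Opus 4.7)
The strategy is a direct computation. By definition of $q_j$ and of the inner product $\langle\cdot,\cdot\rangle_{\rho_j}$,
\[
\langle q_j,q_j\rangle_{\rho_j}=\int_\M \frac{w_j\rho_j}{\rho}\,\rho_j\,dx = w_j\int_\M\frac{\rho_j^2}{\rho}\,dx,
\]
so the task reduces to controlling $w_j\int_\M\rho_j^2/\rho\,dx$ in terms of $\mathcal{S}$.

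My plan is to exploit the mixture decomposition $\rho=\sum_k w_k\rho_k$ by writing $w_j\rho_j=\rho-\sum_{k\neq j}w_k\rho_k$. Dividing by $\rho$ and multiplying through by $\rho_j$ gives
\[
w_j\,\frac{\rho_j^2}{\rho}=\rho_j-\sum_{k\neq j}w_k\,\frac{\rho_k\rho_j}{\rho}.
\]
Integrating over $\M$, using $\int_\M\rho_j\,dx=1$, yields the exact identity
\[
\langle q_j,q_j\rangle_{\rho_j}=1-\sum_{k\neq j}w_k\int_\M \frac{\rho_k\rho_j}{\rho}\,dx.
\]

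From here the bounds are immediate. Each integral $\int_\M \rho_k\rho_j/\rho\,dx$ is non-negative and, by the definition \eqref{eqn:S2} of the overlapping parameter, bounded above by $\mathcal{S}$ whenever $k\neq j$. Since $\sum_{k\neq j}w_k\le 1$, the correction term lies in $[0,\mathcal{S}]$, giving $1-\mathcal{S}\le\langle q_j,q_j\rangle_{\rho_j}\le 1$, which is stronger than the claimed upper bound $1+\mathcal{S}$. There is no real obstacle here; the entire argument is algebraic manipulation of the mixture identity together with the definition of $\mathcal{S}$.
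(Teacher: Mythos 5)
Your proof is correct and follows essentially the same route as the paper: both rest on the identity $w_j\rho_j-\rho=-\sum_{k\neq j}w_k\rho_k$, which turns $\langle q_j,q_j\rangle_{\rho_j}-1$ into $-\sum_{k\neq j}w_k\int_\M \rho_k\rho_j/\rho\,dx$, bounded in magnitude by $\mathcal{S}$ via \eqref{eqn:S2} and $\sum_{k\neq j}w_k\le 1$. The only (minor) difference is that you keep track of the sign of the correction term and thereby obtain the slightly sharper upper bound $\langle q_j,q_j\rangle_{\rho_j}\le 1$, whereas the paper takes absolute values and settles for $1+\mathcal{S}$; this sharpening is consistent with the pointwise bound $0\le q_j\le 1$ used elsewhere in the paper.
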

\begin{remark}
It follows from the above lemma that $\|q_j\|_{\rho_j}$ converges to one as $\S \to 0$. In general however, $\S$ may be bigger than one, in which case the lower bound is trivial.
\end{remark}

\begin{proof}
From the definition of $q_j$ we see that
\[ \langle q_j, q_j \rangle_{\rho_j}  = 1 + \int_{\M}\left( \frac{w_j\rho_j}{\rho} - 1 \right) \rho_j dx.\]
On the other hand, 
\begin{align*}
\begin{split}
\left \lvert   \int_{\M}\left( \frac{w_j\rho_j}{\rho} - 1 \right) \rho_j dx \right \rvert  & \leq \int_{\M} | w_j \rho_j - \rho | \frac{\rho_j}{\rho}dx  
\\ & = \int_{\M}\sum_{k \not = j} w_k \frac{\rho_k \rho_j}{\rho} dx
\\ & =  \sum_{k \not = j} w_k \int_{\M} \frac{\rho_k \rho_j}{\rho} dx
\\ & \leq \mathcal{S}.
\end{split}
\end{align*}
From the above computations we obtain the desired inequality.
\end{proof}

 \begin{lemma}
  For every $j\in \left\{1, \dots, N \right\}$,
  \begin{equation}
  \inf_{ \langle  v ,  q_j    \rangle_{\rho_j} =0  } \frac{ \int_{\M} | \nabla v |^2 \rho_j dx}{\langle v, v \rangle_{\rho_j} } \geq \Ind(1- \mathcal{S}),
  \label{eqnLemmaEignvalueC2New}
  \end{equation}
  \label{LemmaEignvalueC2New}
 \end{lemma}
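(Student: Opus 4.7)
The plan is to use the spectral decomposition of $\Delta_{\rho_j}$ in $L^2(\rho_j)$ to convert the constraint $\langle v, q_j\rangle_{\rho_j}=0$ into an estimate on how much $v$ can overlap with the constant mode, and then exploit the pointwise bound $0\leq q_j\leq 1$ (which follows from $q_j^2 = w_j\rho_j/\rho \leq 1$) to get the sharp constant $1-\S$.

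First I would let $\{\phi_k\}_{k\geq 0}$ be an $L^2(\rho_j)$-orthonormal eigenbasis of $\Delta_{\rho_j}$ with eigenvalues $0=\mu_0<\mu_1\leq\mu_2\leq\cdots$ and $\phi_0=\mathds{1}$; by the definition of $\Ind_j$ and $\Ind$, $\mu_1=\Ind_j\geq\Ind$. Writing $v=\sum_k b_k\phi_k$ and $q_j=\sum_k a_k\phi_k$, the orthogonality condition $\langle v,q_j\rangle_{\rho_j}=0$ becomes $a_0 b_0=-\sum_{k\geq 1}a_k b_k$. Applying Cauchy--Schwarz together with Parseval's identity $\|q_j\|_{\rho_j}^2=\sum_k a_k^2$ yields
$$b_0^2\leq \frac{\|q_j\|_{\rho_j}^2-a_0^2}{a_0^2}\sum_{k\geq 1}b_k^2,$$
so that $\|v\|_{\rho_j}^2\leq (\|q_j\|_{\rho_j}^2/a_0^2)\sum_{k\geq 1}b_k^2$. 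On the other hand, $\int_\M|\nabla v|^2\rho_j\,dx=\sum_{k\geq 1}\mu_k b_k^2\geq \Ind\sum_{k\geq 1}b_k^2$, so the Rayleigh quotient is bounded below by $\Ind\,a_0^2/\|q_j\|_{\rho_j}^2$.

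It then suffices to show $a_0^2/\|q_j\|_{\rho_j}^2 \geq 1-\S$, where $a_0=\langle q_j,\mathds{1}\rangle_{\rho_j}=\int_\M q_j\,\rho_j\,dx$. The key observation is that $0 \leq q_j \leq 1$ pointwise, which yields simultaneously $q_j^2\leq q_j$ and $1-q_j\leq 1-q_j^2$. The first inequality gives $\|q_j\|_{\rho_j}^2\leq a_0$; the second, combined with Lemma~\ref{lem:qjnormbound}, gives $a_0=1-\int_\M(1-q_j)\rho_j\,dx\geq \|q_j\|_{\rho_j}^2\geq 1-\S$. Chaining these, $a_0^2/\|q_j\|_{\rho_j}^2 \geq a_0 \geq 1-\S$, which completes the argument.

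The main obstacle is arriving at the factor $(1-\S)$ rather than $(1-\S)^2$: a naive estimate that bounds $a_0\geq 1-\S$ and $\|q_j\|_{\rho_j}^2\leq 1$ separately wastes a factor of $1-\S$. The sharpening hinges on using $0\leq q_j\leq 1$ \emph{twice}, once to dominate $\|q_j\|_{\rho_j}^2$ by $a_0$, and again (via $1-q_j\leq 1-q_j^2$) to dominate $1-a_0$ by $1-\|q_j\|_{\rho_j}^2$, so that the two norms cancel cleanly against one another.
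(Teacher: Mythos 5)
Your proof is correct and follows essentially the same route as the paper's: expand in the $L^2(\rho_j)$-eigenbasis of $\Delta_{\rho_j}$, use the spectral gap $\Ind_j\ge\Ind$ on the non-constant modes, and control the constant mode via the constraint $\langle v,q_j\rangle_{\rho_j}=0$ together with $0\le q_j\le 1$ and Lemma~\ref{lem:qjnormbound}. The only (cosmetic) difference is that you apply Cauchy--Schwarz in coefficient space to $a_0b_0=-\sum_{k\ge1}a_kb_k$, whereas the paper applies it directly to $\int_\M v(1-q_j)\rho_j\,dx$; both yield the same constant $\Ind(1-\S)$.
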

 
 \begin{proof}
   Let us fix $j \in \left\{1, \dots, N \right\}$ and pick $v \in  H^1(\M, \rho_j)$,
     such that 
   $\langle  v ,  q_j    \rangle_{\rho_j} =0$, and $\langle v, v \rangle_{\rho_j} =1$, where
   \begin{equation}
     \label{H1-M-definition}
     H^1(\mathcal{M},\rho_j) := \left\{ u \in L^2(\mathcal{M}, \rho_j) | \int_{\mathcal M}
       \left( | \nabla u|^2 + |u|^2\right) \rho_j dx< + \infty \  \right\}.
   \end{equation}
  Observe that
  \[ \int_{\M} | \nabla v |^2 \rho_j dx  = \sum_{k=1}^{\infty} \langle v , e_{j,k} \rangle^2_{\rho_j} 
\lambda_{j,k},      \]
  where in the above $\{\lambda_{j,k}, e_{j,k}\}$  are the orthonormal (w.r.t. $\langle \cdot, \cdot \rangle_{\rho_j}$) eigenpairs of $\Delta_{\rho_j}$. Since $\lambda_{j,1}=0$, the first eigenvector is given by $e_{j,1}\equiv \mathds{1}$, the function which is identically equal to one. Using the above equality and the Pythagorean theorem we conclude that
  \begin{align}
  \int_{\M} | \nabla v |^2 \rho_j dx  \geq \lambda_{j,2} \sum_{k=2}^{\infty} \langle v , e_{j,k} \rangle^2_{\rho_j}  
  &=  \lambda_{j,2} \left( \langle v , v  \rangle_{\rho_j}    - \langle v, e_{j,1}\rangle^2_{\rho_j}   \right)  \notag\\
  &=   \Ind_j \left(1    - \langle v, e_{j,1}\rangle^2_{\rho_j}   \right),
  \label{auxLemmalambda}
  \end{align}
  where we recall $\Ind_j$ was defined in \eqref{Indivisbility}, and corresponds to the second eigenvalue of the operator $\Delta_{\rho_j}$ by the max-min formula \cite[Thm.~8.4.2]{Buttazzo}.
   
 We now find an upper bound for $\langle v, e_{j,1}\rangle^2_{\rho_j}$. Notice that
  \[ \langle v, e_{j,1}\rangle_{\rho_j}  =  \int_{\M} v \rho_j dx = \int_{\M} v( 1 - q_j) \rho_j dx,     \]
  where the second equality follows from the fact that $\langle  v ,  q_j    \rangle_{\rho_j} =0$. The Cauchy-Schwartz inequality implies that,
  \[ \langle v, e_{j,1}\rangle_{\rho_j}^2 \leq \lVert v \rVert_{\rho_j}^2 \int_{\M}(1- q_j )^2 \rho_j dx = \int_{M}  (1-q_j)^2 \rho_j dx = 1 + \int_{\M} (q_j^2 -2q_j ) \rho_j dx.   \]
  From the fact that $0 \leq q_j \leq 1$, we conclude that $(q_j^2 -2q_j ) \leq -q_j^2$, and therefore 
  \[ \langle v, e_{j,1}\rangle_{\rho_j}^2 \leq  1 - \int_{\M} q_j^2 \rho_j dx = 1 - \langle q_j, q_j \rangle_{\rho_j} \leq 1- (1 - \mathcal{S}) = \mathcal{S},   \]
  where the second inequality follows from Lemma \ref{normqm}. To conclude, the estimate \eqref{eqnLemmaEignvalueC2New} is a consequence of the above inequality and \eqref{auxLemmalambda}.
 \end{proof}
 
 We are now in a position to find a lower bound for the $(N+1)$-th eigenvalue $\lambda_{N+1}$ of $\Delta_{\rho}$.
\begin{proposition}[Lower bound for $\lambda_{N+1}$]
\label{auxLemmaSigmaB}
Suppose that $N\S <1$.
 Then, 
 \[ \left( \sqrt{\Ind(1 - N\S)}  - \frac{\sqrt{\C N \S}}{(1- \S)}\right)^2 \leq \lambda_{N+1} \,.\]
\end{proposition}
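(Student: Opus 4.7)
The plan is to invoke the Courant--Fischer min--max characterization with the $N$-dimensional test subspace $Q := \Span\{q_1, \dots, q_N\}$, which yields
\[
\lambda_{N+1} \geq \inf \Bigl\{ \int_\M |\nabla v|^2 \rho \, dx \;:\; v \in H^1(\M,\rho),\ \|v\|_\rho=1,\ \langle v,q_k\rangle_\rho=0 \text{ for all } k \Bigr\}.
\]
Fix such an admissible $v$. To exploit Lemma~\ref{LemmaEignvalueC2New} component by component, for each $k$ decompose $v = v_k^\perp + c_k q_k$ with $c_k := \langle v,q_k\rangle_{\rho_k}/\|q_k\|^2_{\rho_k}$, so that $v_k^\perp \perp q_k$ in $L^2(\rho_k)$; the lemma then gives $\|\nabla v_k^\perp\|^2_{\rho_k} \geq \Ind(1-\S)\|v_k^\perp\|^2_{\rho_k}$. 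Expanding $\nabla v = \nabla v_k^\perp + c_k \nabla q_k$ in each $L^2(\rho_k)$, multiplying by $w_k$, summing, and controlling the resulting cross term by Cauchy--Schwartz produces the Minkowski-type estimate
\[
\Bigl( \int_\M |\nabla v|^2 \rho \, dx \Bigr)^{1/2} \geq \Bigl( \sum_k w_k \|\nabla v_k^\perp\|^2_{\rho_k} \Bigr)^{1/2} - \Bigl( \sum_k w_k c_k^2 \|\nabla q_k\|^2_{\rho_k} \Bigr)^{1/2},
\]
where $\rho = \sum_k w_k \rho_k$ identifies the left-hand side with the Dirichlet form.

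The next step is to reduce both summands on the right to the single quantity $\sum_k w_k \langle v,q_k\rangle^2_{\rho_k}$. For the second summand, the identity $2 q_k \nabla q_k = q_k^2 (\nabla\rho_k/\rho_k - \nabla\rho/\rho)$ combined with $w_k\rho_k = q_k^2 \rho$ yields $\int|\nabla q_k|^2\rho\,dx = w_k \C_k$, and since $\rho_k \leq \rho/w_k$ one obtains $\|\nabla q_k\|^2_{\rho_k}\leq \C$; together with $\|q_k\|^2_{\rho_k} \geq 1-\S$ from Lemma~\ref{lem:qjnormbound}, this gives
\[
\sum_k w_k c_k^2 \|\nabla q_k\|^2_{\rho_k} \leq \frac{\C}{(1-\S)^2}\sum_k w_k \langle v,q_k\rangle^2_{\rho_k}.
\]
For the first summand, the Pythagorean identity $\|v\|^2_{\rho_k} = \|v_k^\perp\|^2_{\rho_k} + \langle v,q_k\rangle^2_{\rho_k}/\|q_k\|^2_{\rho_k}$ and $\sum_k w_k\|v\|^2_{\rho_k}=1$ give
\[
\sum_k w_k \|v_k^\perp\|^2_{\rho_k} \geq 1 - \frac{1}{1-\S}\sum_k w_k \langle v,q_k\rangle^2_{\rho_k}.
\]

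The main obstacle is to establish the sharp estimate $\sum_k w_k\langle v,q_k\rangle^2_{\rho_k}\leq (N-1)\S$; the naive $N\S$ bound would only produce $\sqrt{\Ind(1-(N+1)\S)}$ in place of $\sqrt{\Ind(1-N\S)}$. The idea is to use $w_k\rho_k = q_k^2\rho$, the partition of unity $\sum_j q_j^2 \equiv 1$, and the orthogonality constraint $\langle v,q_k\rangle_\rho = 0$ to rewrite $w_k\langle v,q_k\rangle_{\rho_k} = -\sum_{j\neq k}\int v q_k q_j^2 \rho\,dx$; then a pointwise Cauchy--Schwartz together with $\int q_k^2 q_j^2 \rho\,dx = w_k w_j \int \rho_k \rho_j/\rho\,dx \leq w_k w_j \S$ bounds the $j$-th summand by $w_j\sqrt{w_k\S}\,\|v\|_{\rho_j}$, and a second Cauchy--Schwartz in the sum $\sum_{j\neq k}$ exploiting both $\sum_{j\neq k}w_j = 1-w_k$ and $\sum_j w_j\|v\|^2_{\rho_j}=1$ produces $w_k \langle v,q_k\rangle^2_{\rho_k} \leq \S(1-w_k)(1-w_k\|v\|^2_{\rho_k})$. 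Summing over $k$ with $\sum_k w_k=1$ yields exactly $(N-1)\S$, so $\sum_k w_k\|v_k^\perp\|^2_{\rho_k} \geq (1-N\S)/(1-\S)$, making the first Minkowski summand at least $\sqrt{\Ind(1-N\S)}$ and the second at most $\sqrt{\C N\S}/(1-\S)$. Squaring the resulting estimate completes the proof, with the sign condition $\sqrt{\Ind(1-N\S)} \geq \sqrt{\C N\S}/(1-\S)$ guaranteed by the well-separated regime $\S \ll 1$, $\C/\Ind \ll 1$.
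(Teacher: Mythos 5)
Your proof is correct and follows essentially the same route as the paper: the min--max principle over $Q^\perp$, the per-component decomposition of $v$ along $q_k$ in $L^2(\rho_k)$ combined with Lemma~\ref{LemmaEignvalueC2New}, and the bound $w_k\langle v,q_k\rangle_{\rho_k}^2\le \S(1-w_k)$ derived from $\langle v,q_k\rangle_\rho=0$ are exactly the paper's ingredients (the paper also obtains the $(N-1)\S$ total via $\sum_j(1-w_j)=N-1$, so nothing is gained there). The only cosmetic difference is that you assemble the pieces with a reverse triangle inequality in $\bigoplus_k L^2(\rho_k;w_k)$, whereas the paper sums the componentwise inequalities and completes the square; these yield the identical final estimate.
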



\begin{proof}
Let us consider $u \in H^1(\M, \rho)$ satisfying $\langle u, u \rangle_{\rho}=1$, and $u\in Q^\perp$, i.e. $u$ is orthogonal to the span of $\{q_1,...,q_N\}$ in terms of the inner product $\langle \cdot , \cdot \rangle_{\rho}$. From the fact that $\langle u,q_j\rangle_\rho=0$ for all $j\in\{1,...,N\}$, we deduce that for every $j=1, \dots, N$,
\[  \langle  u , q_j \rangle_{\rho_j} =  \int_{\M} u q_j \rho_j dx =   \frac{1}{w_j}\int_{\M} u q_j \left( w_j \rho_j- \rho    \right) dx =-  \frac{1}{w_j} \sum_{k \not = j} w_k \int_{\M} u q_j \rho_k dx . \]
Hence, 
\begin{align}
\begin{split}
\lvert  \langle u , q_j \rangle_{\rho_j}   \rvert  
&\leq  \frac{1}{w_j}\sum_{k \not = j } w_k \int_{\M} \lvert u \rvert q_j \rho_k dx
\\ & \leq \frac{1}{w_j}\sum_{k \not = j } w_k \left(  \int_{\M} u^2  \rho_k dx \right)^{1/2} \left(  \int_{\M} q_j^2  \rho_k dx \right)^{1/2}
\\& \leq \sqrt{\frac{\mathcal{S}}{w_j}}\sum_{k \not = j } w_k \left(  \int_{\M} u^2  \rho_k dx \right)^{1/2}
\\& \leq \sqrt{\frac{(1-w_j)\S}{w_j}}\left( \sum_{k \not = j } w_k   \int_{\M} u^2  \rho_k dx \right)^{1/2}
\leq \sqrt{\frac{(1-w_j)\S}{w_j}},
\end{split}
\label{normqmInnerprod}
\end{align}
where in the second inequality we have used the Cauchy-Schwartz inequality, in the fourth inequality we have used Jensen's inequality, and in the last inequality the fact that $\langle u,u \rangle_{\rho}=1$. 
Define now the functions 
\[ v_j:= u - \left(\frac{\langle u, q_j \rangle_{\rho_j}}{ \langle q_j , q_j \rangle_{\rho_j} }\right) q_j, \quad j=1, \dots, N.\]
The function $v_j$ is orthogonal to $q_j$ with respect to the inner product $\langle \cdot,\cdot \rangle_{\rho_j}$. In addition, 
\begin{align*} 
\int_{\M} \lvert \nabla v_j \rvert^2 \rho_jdx  
= &\int_{\M} | \nabla u |^2  \rho_j dx - 2 \frac{\langle u, q_j \rangle_{\rho_j}}{ \langle q_j , q_j \rangle_{\rho_j} } \int_{\M} \nabla q_j \cdot \nabla u \rho_j dx \\
&+ \left( \frac{\langle u, q_j \rangle_{\rho_j}}{ \langle q_j , q_j \rangle_{\rho_j} } \right)^2 \int_{\M} | \nabla q_j |^2 \rho_j dx. 
\end{align*}
Notice that,
\[ \int_{\M} \lvert \nabla q_j \rvert ^2 \rho_j dx  = \frac{w_j}{4}  \int_{\M} \lvert \nabla (\rho_j /\rho) \rvert^2 \rho dx   
= \frac{1}{4}  \int_{\M} q_j^2 \left|\frac{\nabla\rho_j}{\rho_j}-\frac{\nabla \rho}{\rho}\right|^2\rho_j\, dx  \leq\mathcal{C}_j,  
\]
where in the last step we used the fact that $0\leq q_j \leq 1$.  From the above inequality, Lemma \ref{normqm}, and \eqref{normqmInnerprod}, it follows that
\begin{align*}
 \int_{\M} \lvert \nabla v_j \rvert^2 \rho_jdx 
&\leq  \int_{\M} | \nabla u |^2  \rho_j dx  + 
2\left(\sqrt{\frac{1-w_j}{w_j}}\right)\left( \frac{\sqrt{\mathcal{C}\mathcal{S}} }{(1- \mathcal{S}) }  \right) \left( \int_{\M} | \nabla u |^2  \rho_j dx \right)^{1/2}\\
&\quad + \frac{(1-w_j)\mathcal{C}\mathcal{S} }{w_j(1- \mathcal{S})^2},  
\end{align*}
According to Lemma \ref{LemmaEignvalueC2New},  the left hand side of the above expression can be bounded from below by $ \Ind(1 - \mathcal{S}) \langle v_j , v_j \rangle_{\rho_j}$ (since $\langle v_j, q_j \rangle_{\rho_j} =0$). On the other hand, 
\[  \langle v_j , v_j \rangle_{\rho_j}  = \langle  u, u \rangle_{\rho_j} - \frac{ \langle u, q_j \rangle_{\rho_j}^2}{\langle q_j, q_j \rangle_{\rho_j}} \geq \langle  u, u \rangle_{\rho_j} - \frac{(1-w_j)\mathcal{S}}{w_j(1 - \S)},  \]
as it follows from \eqref{normqmInnerprod} and Lemma \ref{normqm}.  
Therefore, 
\begin{equation*}
          \begin{split}
\Ind(1 - \mathcal{S}) &\langle u , u \rangle_{\rho_j} - \frac{(1-w_j)\Ind \mathcal{S}}{w_j}
\\ & \le\int_{\M} | \nabla u |^2  \rho_j dx  + 2\left(\sqrt{\frac{1-w_j}{w_j}}\right)\left( \frac{\sqrt{\mathcal{C}\mathcal{S}} }{(1- \mathcal{S}) }  \right) \left( \int_{\M} | \nabla u |^2  \rho_j dx \right)^{1/2}\\
&\quad + \frac{(1-w_j)\mathcal{C}\mathcal{S} }{w_j(1- \mathcal{S})^2 } .
      \end{split}
\label{AuxSigmaC1}
\end{equation*}
Multiplying both sides of the above inequality by $w_j$ and adding over $j$ we deduce
\[  \Ind(1 - N\mathcal{S})  \leq   \int_{\M} | \nabla u |^2  \rho dx  
+  2\left( \frac{\sqrt{\mathcal{C}\mathcal{S}} }{(1- \mathcal{S}) }  \right) \sum_{j=1}^{N}\left( \int_{\M} | \nabla u |^2  w_j\rho_j dx \right)^{1/2} +  \frac{\mathcal{C} N \mathcal{S}}{(1- \mathcal{S})^2} . \]
Applying Jensen's inequality and using the fact that $\langle u,u \rangle_\rho=1$ we have
\begin{align*}
\begin{split}
\Ind(1 - N\mathcal{S})& \leq  \int_{\M} | \nabla u |^2  \rho dx  
+  2 \frac{\sqrt{\mathcal{C}N\mathcal{S}} }{(1- \mathcal{S}) }   \left( \int_{\M} | \nabla u |^2  \rho dx \right)^{1/2} +  \frac{\mathcal{C} N \mathcal{S}}{(1- \mathcal{S})^2}\\
& = \left(   \lVert \nabla u \rVert_\rho + \frac{\sqrt{ \C N \S} }{(1-\S)}  \right)^2\,.
\end{split}
\end{align*}
That is,
\[  \sqrt{\Ind(1 - N\S)   } - \frac{\sqrt{\C N \S}}{(1- \S)} \leq \lVert \nabla u  \rVert_\rho\,. \]
Given that the above inequality holds for every $u \in H^1(\M,\rho)$ with $u \in Q^\perp$ and $\langle u,u  \rangle_\rho=1$, we conclude from the fact that $Q$ has dimension $N$ and from the max-min formula \cite[Theorem 8.4.2]{Buttazzo},
\[   \left( \sqrt{\Ind(1 - N\S)  } - \frac{\sqrt{\C N \S}}{(1- \S)} \right)^2  \leq  \min_{u \in Q^\perp}    \frac{\int_{\M} |\nabla u|^2 \rho dx}{\int_{\M} u^2 \rho dx }  \leq \lambda_{N+1}  \]
\end{proof}
 
 Combining the previous proposition with Proposition \ref{ProjectionsError} we obtain the following result.
 \begin{corollary}
  \label{corProjec}
  For every $k=1, \dots, N$ we have
  \begin{equation} \label{qproj}
  \frac{1}{w_k} \lVert q_k - \PN(q_k) \rVert^2_\rho \leq  \left( 
\sqrt{\frac{\Ind(1 -N\S)}{\C}  } - \frac{\sqrt{ N \S}}{(1- \S)} \right)^{-2}.
  \end{equation}
 \end{corollary}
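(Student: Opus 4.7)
The plan is to obtain Corollary \ref{corProjec} as a direct algebraic consequence of the two preceding propositions, namely Proposition \ref{ProjectionsError} (which controls the projection error by $\mathcal{C}/\lambda_{N+1}$) and Proposition \ref{auxLemmaSigmaB} (which provides the lower bound on $\lambda_{N+1}$ in terms of $\Ind$, $\S$, $\C$, $N$). Hence there is no new analytic ingredient required; the argument is purely a chain of inequalities.

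First, I would invoke Proposition \ref{ProjectionsError} to write
\[
\frac{1}{w_k}\lVert q_k - \PN(q_k) \rVert^2_\rho \;\leq\; \frac{\mathcal{C}}{\lambda_{N+1}}.
\]
Next, under the hypothesis $N\S<1$ that is implicit in Proposition \ref{auxLemmaSigmaB}, and provided $\sqrt{\Ind(1-N\S)} > \sqrt{\C N\S}/(1-\S)$ so that the lower bound on $\lambda_{N+1}$ is nontrivial (and positive), I would invert that lower bound to get
\[
\frac{1}{\lambda_{N+1}} \;\leq\; \left( \sqrt{\Ind(1 - N\S)} - \frac{\sqrt{\C N \S}}{(1- \S)}\right)^{-2}.
\]

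Finally, multiplying the right-hand side by $\mathcal{C}$ and pulling the factor $\sqrt{\mathcal{C}}$ inside the parenthesis to distribute it across both terms yields
\[
\frac{\mathcal{C}}{\left( \sqrt{\Ind(1 - N\S)} - \frac{\sqrt{\C N \S}}{(1- \S)}\right)^{2}}
\;=\;
\left( \sqrt{\frac{\Ind(1 - N\S)}{\C}} - \frac{\sqrt{N\S}}{1-\S}\right)^{-2},
\]
which is exactly the bound \eqref{qproj}. The only thing to be slightly careful about is the sign of $\sqrt{\Ind(1-N\S)} - \sqrt{\C N \S}/(1-\S)$ before squaring and inverting; this positivity is precisely the well-separatedness condition that appears throughout the statement of Theorem \ref{mainTheorem}, so it is natural to simply note it as a standing assumption. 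I do not anticipate any genuine obstacle — this corollary is essentially a bookkeeping step that packages the previous two propositions into the form that will be used later in Section \ref{sec:ProofsMain}.
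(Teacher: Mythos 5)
Your proposal is correct and is exactly the paper's argument: the corollary is obtained by combining Proposition \ref{ProjectionsError} with the lower bound on $\lambda_{N+1}$ from Proposition \ref{auxLemmaSigmaB}, and the algebraic step of absorbing $\sqrt{\C}$ into the parenthesis is carried out correctly. Your remark about the positivity of $\sqrt{\Ind(1-N\S)}-\sqrt{\C N\S}/(1-\S)$ before inverting is also the right caveat, consistent with the standing well-separatedness assumptions.
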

 The above corollary shows that for a well-separated mixture model, that is, for $\S$ and $\C/\Ind$ small, the right-hand side in \eqref{qproj} is small also, and so in that case, the functions $q_1,...,q_N$ are close to their projections onto the first $N$ eigenfunctions of $\Delta_\rho$. We will be able to conclude that $F_{\sharp}\nu$ has an orthogonal cone structure provided we can show $F^Q_{\sharp}\nu$ has one. 
 
\begin{proposition}\label{prop:muQcone}
 The probability measure $\mu^Q:=F^Q_{\sharp} \nu$ with $F^Q$ defined in \eqref{FQ} has an orthogonal cone structure with parameters $(\sigma,\delta, r)$ for any $\sigma\in(0,\pi/4)$, $\delta^*\leq \delta <1$ and $r=w_{max}^{-1/2}$ where
 $$
 \delta^*:= \frac{w_{max} \cos^2(\sigma) N^2 \S }{w_{min}(1-\cos^2(\sigma))}\,.
 $$
 Here, $w_{max}:=\max_{i  =1, \dots, N} w_i$ and $w_{min}:=\min_{i  =1, \dots, N} w_i$.
\end{proposition}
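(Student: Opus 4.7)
The plan is to reduce the cone membership condition to a pointwise condition on the densities $\rho_k$, and then bound the measure of the ``bad set'' via the overlapping parameter $\S$.

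First I would unpack the map $F^Q$. Writing out coordinates and using $q_k = \sqrt{w_k\rho_k/\rho}$, one has $F^Q_k(x) = q_k(x)/\sqrt{w_k} = \sqrt{\rho_k(x)/\rho(x)}$, so
\[
|F^Q(x)|^2 \;=\; \sum_{k=1}^N \frac{\rho_k(x)}{\rho(x)}\,.
\]
Since $\rho = \sum_\ell w_\ell \rho_\ell \leq w_{\max}\sum_\ell \rho_\ell$, we get $|F^Q(x)|^2 \geq 1/w_{\max}$, which handles the radial condition $|z|>r$ on $\cup_j C(e_j,\sigma,r)$ (with the proviso that the borderline set where equality holds corresponds to $x$ at which only maximal-weight components contribute; this is a minor technicality which I expect to be the main obstacle in a fully rigorous write-up, and is handled either by the assumed regularity of $\rho$ or a limiting argument in $r$).

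Next, the angular condition $F^Q(x)\cdot e_j/|F^Q(x)|>\cos\sigma$ squares to $\rho_j(x)/\Sigma(x)>\cos^2\sigma$, where $\Sigma(x):=\sum_\ell \rho_\ell(x)$. Thus the bad set (complement of $\cup_j C(e_j,\sigma,r)$ in $\M$) is contained in
\[
A \;:=\; \bigl\{x\in\M : \rho_j(x)\leq \cos^2(\sigma)\,\Sigma(x)\ \text{for all } j=1,\dots,N\bigr\}\,.
\]
On $A$, setting $p_j := \rho_j/\Sigma$, we have $\max_j p_j \leq \cos^2\sigma$, and since $\sum_j p_j=1$,
\[
\sum_{j\neq k} p_j p_k \;=\; 1 - \sum_j p_j^2 \;\geq\; 1 - \cos^2\sigma \;=\; \sin^2\sigma\,.
\]

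The key estimate is then to bound $\int_A \sum_{j\neq k} p_j p_k\, d\nu$ from above using $\S$. Writing $\int p_j p_k\,d\nu = \int \frac{\rho_j\rho_k\rho}{\Sigma^2}\,dx$ and observing that $\rho/\Sigma \leq w_{\max}$ gives $\rho/\Sigma^2 \leq w_{\max}^2/\rho$, so
\[
\int_\M p_j p_k\,d\nu \;\leq\; w_{\max}^2 \int_\M \frac{\rho_j\rho_k}{\rho}\,dx \;\leq\; w_{\max}^2\,\S\,.
\]
Combining with the lower bound on $A$, $\sin^2(\sigma)\,\nu(A) \leq N(N-1)\,w_{\max}^2\,\S$.

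Finally I would verify that this estimate is dominated by the claimed $\delta^*$. It suffices to check
\[
N(N-1)\,w_{\max}^2 \;\leq\; \frac{w_{\max}\cos^2\sigma\,N^2}{w_{\min}}\,,
\quad\text{i.e.}\quad \frac{N-1}{N}\,w_{\max} w_{\min} \;\leq\; \cos^2\sigma\,.
\]
Since $w_{\max}+w_{\min}\leq 1$ implies $w_{\max}w_{\min}\leq 1/4$ by AM--GM, and $\sigma<\pi/4$ gives $\cos^2\sigma>1/2>1/4$, this inequality holds. Hence $\nu(A)\leq\delta^*\leq\delta$, yielding the cone structure. The only delicate point throughout is the strict-inequality issue in the radial condition, which should be absorbed in the estimate on $A$ (since any $x$ with $|F^Q(x)|=r$ but satisfying the angular condition lies on a negligible boundary set provided by the regularity of the mixture components).
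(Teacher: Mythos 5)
Your proof is correct, and while it follows the same overall strategy as the paper (reduce cone membership to a pointwise condition on the densities, then integrate the resulting inequality against $\nu$ and invoke $\S$), the key pointwise estimate on the bad set is genuinely different. The paper selects, for each $x_0$ in the bad set, a dominant component $\hat k$ with $w_{\hat k}\rho_{\hat k}(x_0)/\rho(x_0)\geq 1/N$ and squares the failed angular inequality for that single index, arriving at $\nu(\cap_k A_k^c)\leq \delta^*$ exactly. You instead use the identity $\sum_{j\neq k}p_jp_k = 1-\sum_j p_j^2\geq 1-\cos^2\sigma$ for $p_j=\rho_j/\Sigma$ summing to one, which is cleaner and in fact yields the sharper bound $\nu(A)\leq N(N-1)w_{\max}^2\,\S/(1-\cos^2\sigma)$; your final verification that this is dominated by $\delta^*$ (via $w_{\max}w_{\min}\leq 1/4<\cos^2\sigma$ for $\sigma<\pi/4$, $N\geq 2$) is correct and necessary since the proposition is stated with the paper's constant. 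One remark on the radial condition: you are right to flag it, and the issue is slightly worse than a negligible boundary set --- when all weights are equal, $|F^Q(x)|^2=\sum_\ell\rho_\ell(x)/\rho(x)=1/w_{\max}$ identically, so the strict inequality $|z|>r$ of Definition \ref{def:contcones} fails everywhere and no regularity or limiting argument in $x$ rescues it. The paper silently sidesteps this by writing $|z|\geq r$ in its definition of the cones $C_k$ inside the proof; the honest fix is to take $r$ strictly below $w_{\max}^{-1/2}$ or to read the radial condition as non-strict, neither of which affects anything downstream.
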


\begin{proof}
For each $k=1, \dots, N$, let
\[ C_k := \left\{  z \in \R^N \: : \:  \frac{z_k }{|z|}   > \cos(\sigma) , \quad \lvert z \rvert \geq r \right\}, \]
with $r:= \frac{1}{\sqrt{w_{max}}} $ and fixed $\sigma \in (0, \pi/4)$.
It follows that 
\[\mu^Q(C_k) = F^{Q}_{\sharp}\nu(C_k)=  \nu (A_k),\]
where $A_k$ is the preimage of $C_k$ through $F^Q$, i.e.
\begin{align*}
 A_k:= &(F^Q)^{-1}(C_k) \\
 =&\left\{  x \in \M   \: : \:  \sqrt{\frac{\rho_k(x)}{\rho(x)}} > \cos(\sigma) \left( \sum_{j=1}^{N} \frac{\rho_j(x)}{\rho(x)}\right)^{1/2},\: \left( \sum_{j=1}^{N} \frac{\rho_j(x)}{\rho(x)}\right)^{1/2} > r  \right\}.
\end{align*}
From the definition of $r$ we see that the condition 
\[  \left( \sum_{j=1}^{N} \frac{\rho_j(x)}{\rho(x)}\right)^{1/2} \geq r \]
is redundant and so we can write 
\[ A_k=\left\{  x \in \M   \: : \:  \rho_k(x) > \cos^2(\sigma)  \sum_{j=1}^{N} \rho_j(x) \right\}.  \]

%

Now, for an arbitrary  $x_0\in \cap_{j=1}^N  A_j^c \subseteq \M$ we have
\[    \rho_k(x_0) \leq  \cos^2(\sigma) \sum_{j=1}^N \rho_j(x_0) , \quad \forall k=1, \dots, N, \]
or what is the same, 
\[  (1- \cos^2(\sigma))\rho_k(x_0) \leq  \cos^2(\sigma) \sum_{j \not = k} \rho_j(x_0), \quad \forall k =1, \dots, N.   \]
From the fact that, 
\[ \sum_{k=1}^N\frac{w_k \rho_k(x_0)}{\rho(x_0)} =1, \]
we know there exists a $\hat{k } \in \{ 1, \dots, N\}$ (depending on $x_0$) for which 
\[  \frac{w_{\hat k} \rho_{\hat k}(x_0)}{\rho(x_0)} \geq \frac{1}{N}. \]
Hence, 
\begin{align*}
 \frac{1- \cos^2(\sigma)}{N^2} \leq  (1- \cos^2(\sigma)  ) \left(\frac{w_{\hat k} \rho_{\hat k}(x_0)}{\rho(x_0)}\right)^2 &\leq\cos^2(\sigma)\frac{w_{max}}{w_{min}} \sum_{j \not = \hat{k}} \frac{w_j \rho_j(x_0)}{\rho(x_0)} \frac{w_{\hat k} \rho_{\hat k}(x_0)}{\rho(x_0)} 
 \\& \leq \cos^2(\sigma)\frac{w_{max}}{w_{min}}\sum_{k} \sum_{j\not = k} \frac{w_j \rho_j(x_0)}{\rho(x_0)} \frac{w_{k} \rho_{k}(x_0)}{\rho(x_0)}.
 \end{align*}
Since this is true for every $x_0 \in \cap_{k=1}^N A_k^c$, we conclude that
\begin{align*}
 \frac{1-\cos^2(\sigma)}{N^2} \nu \left( \cap_{k=1}^N  A_k^c \right) 
 &\leq \cos^2(\sigma)\frac{w_{max}}{w_{min}} \int_{ \cap_{k=1}^N   A_k^c}  \left(\sum_{l=1}^N\sum_{j\not = l} \frac{w_l\rho_l }{\rho} \frac{w_j\rho_j }{\rho}\right) \rho dx \\
 &\leq \cos^2(\sigma)\frac{w_{max}}{w_{min}} \sum_{l=1}^N\sum_{j\not = l} w_l w_j \int_{\M}   \frac{\rho_l \rho_j }{\rho}  dx 
 \leq  \cos^2(\sigma)\frac{w_{max}}{w_{min}} \S.
\end{align*}
That is, 
\[  \nu \left( \cap_{k=1}^N \tilde A_k^c \right)  \leq \frac{w_{max} \cos^2(\sigma) N^2 \S }{w_{min}(1-\cos^2(\sigma))},  \]
and so we deduce that
\[ \mu^Q\left(  \cup _{k=1}^N C_k   \right)\geq 1- \frac{w_{max} \cos^2(\sigma) N^2 \S }{w_{min}(1-\cos^2(\sigma))}\,.  \]
This concludes the proof.
\end{proof}

\begin{proof}[Proof of Theorem \ref{mainTheorem}]
In order to show that the measure $\mu:=F_{\sharp } \nu$ has an orthogonal cone structure, it is enough to show that the measure $(O F)_{\sharp} \nu$ has an orthogonal cone structure where $OF$  is the map:
\[  OF : x \in \M \mapsto O F(x) \in \R^N,   \]
and where $O$ is a conveniently chosen $N \times N$ orthogonal matrix. We will construct $O$ in such a way that the measures $OF_{\sharp} \nu$ and $F^Q_{\sharp} \nu$ are close to each other in the Wasserstein sense. From this, Proposition~\ref{LemmaConeWasserstein} and Proposition~\ref{prop:muQcone} we will be able to conclude that $OF_{\sharp} \nu$ has an orthogonal cone structure. 

Let us introduce 
\[ v_i:= \frac{\PN(q_i)}{ \lVert \PN(q_i) \rVert_\rho}, \quad i=1, \dots, N,\]
where we recall $\PN: L^2(\rho)\rightarrow U$ is the orthogonal projection onto $U$, the span of the first $N$ eigenfunctions of $\Delta_{\rho}$. From Corollary \ref{corProjec} it follows that
\begin{align}
\begin{split}
  \left \lVert \frac{q_i}{\sqrt{w_i}} - v_i \right \rVert_\rho & \leq \left \lVert \frac{q_i}{\sqrt{w_i}} - \frac{\PN(q_i)}{\sqrt{w_i}} \right \rVert_\rho + \left \lVert \frac{
      \PN(q_i)}{\sqrt{w_i}} - \frac{\PN(q_i)}{\lVert \PN(q_i) \rVert_\rho} \right \rVert_\rho
\\& =  \left \lVert \frac{q_i}{\sqrt{w_i}} - \frac{\PN(q_i)}{\sqrt{w_i}} \right \rVert_\rho  + \frac{1}{\sqrt{w_i}} \left \lvert \lVert \PN(q_i) \rVert_{\rho} -\sqrt{w_i}  \right \rvert
\\&=  \left \lVert \frac{q_i}{\sqrt{w_i}} - \frac{\PN(q_i)}{\sqrt{w_i}} \right \rVert_\rho  + \frac{1}{\sqrt{w_i}} \left \lvert \lVert \PN(q_i) \rVert_{\rho} -  \lVert q_i \rVert_{\rho} \right \rvert
\\& \leq 2 \left \lVert \frac{q_i}{\sqrt{w_i}} - \frac{\PN(q_i)}{\sqrt{w_i}} \right \rVert_\rho 
\\& \leq 2  \left( 
\sqrt{\frac{\Ind(1 -N\S)}{\C}  } - \frac{\sqrt{ N \S}}{(1- \S)} \right)^{-1}.
\end{split}
\end{align}
In particular, for $i \not =j$
\begin{align*}
\begin{split}
\lvert \langle v_i , v_j \rangle_{\rho} \rvert & = \left \lvert \langle v_i- \frac{q_i}{\sqrt{w_i}} , v_j \rangle_{\rho} + \langle \frac{q_i}{\sqrt{w_i}}  , v_j- \frac{q_j}{\sqrt{w_j}} \rangle_{\rho} + \langle \frac{q_i}{\sqrt{w_i}} ,\frac{q_j}{\sqrt{w_j}} \rangle_{\rho} \right \rvert
\\ & \leq \left\lVert \frac{q_i}{\sqrt{w_i}} - v_i \right\rVert_\rho + \left\lVert \frac{q_j}{\sqrt{w_j}} - v_j \right\rVert_\rho + \mathcal{S}^{1/2}
\\& \leq 4  \left( 
\sqrt{\frac{\Ind(1 -N\S)}{\C}  } - \frac{\sqrt{ N \S}}{(1- \S)} \right)^{-1} +  \S^{1/2}= \tau.
\end{split}
\end{align*} 
The first inequality follows from an application of the Cauchy-Schwartz inequality for the first two terms (since $\lVert v_i \rVert_\rho=1$ by definition) and Jensen's inequality for the last term. By assumption \eqref{Ndelta<1}, we can then use Lemma \ref{BH-finite-dim-orht-basis}  to conclude that there exists $\tilde{v}_1, \dots, \tilde{v}_N$, an orthonormal basis for $\left(U,\langle \cdot,\cdot\rangle_\rho\right)$ for which 
\[ \lVert v_i - \tilde{v_i} \rVert_\rho^2 \leq N \left(\frac{1}{\sqrt{1-N \tau}} -1 \right)^{2}, \quad i=1, \dots, N. \]
Therefore, for any $i=1, \dots, N$,
\begin{align}
\left\lVert \frac{q_i}{\sqrt{w_i}} - \tilde{v}_i \right\rVert_\rho^2 
=&\left\lVert \frac{q_i}{\sqrt{w_i}} - v_i \right\rVert_\rho^2 
+2\langle v_i-\tilde{v}_i,\frac{q_i}{\sqrt{w_i}}\rangle_\rho\notag 
- \langle v_i+\tilde v_i , v_i - \tilde v_i \rangle_\rho\\
\leq&\left\lVert \frac{q_i}{\sqrt{w_i}} - v_i \right\rVert_\rho^2 
+4\left\lVert v_i-\tilde{v}_i\right\rVert_\rho \notag\\
\leq & 4 \left( 
\sqrt{\frac{\Ind(1 -N\S)}{\C}  } - \frac{\sqrt{ N \S}}{(1- \S)} \right)^{-2}
+ 4\sqrt{N} \left(\frac{1}{\sqrt{1-N \tau}} -1 \right)\notag\\
= & \left(\frac{\tau-\sqrt{\S}}{2}\right)^2
+ 4\sqrt{N} \left(\frac{1}{\sqrt{1-N \tau}} -1 \right)\,. 
\label{ProofEstimate}
\end{align}
Let $\tilde{F}: \M \mapsto \R^N$ be the map $\tilde{F}(x)=\sum_{j=1}^N\tilde{v}_j(x) e_j$.
From the fact that $\{\tilde{v}_1, \dots, \tilde{v}_N\}$ and $\{u_1, \dots, u_N \}$ are both orthonormal bases for $\left(U,\langle \cdot,\cdot\rangle_\rho\right)$ we can conclude there exists an orthogonal matrix $O \in \R^N \times \R^N$ for which
\[ OF = \tilde{F}. \] 
Moreover, let $\pi:=(F^Q \times \tilde F)_{\sharp} \nu$. We notice that $\pi$ is a coupling between $F^{Q}_{\sharp } \nu$ and $\tilde F_{\sharp} \nu$. From \eqref{ProofEstimate} we see that
\begin{align}
W^2_2(F^Q_{\sharp}\nu, \tilde{F}_{\sharp} \nu ) & \leq \int_{\R^N}\int_{\R^N} \lvert z- \tilde{z} \rvert^2 d \pi(z, \tilde{z}) \notag
\\ & = \int_{\M} \lvert F^Q(x) - \tilde F(x) \rvert^2 d \nu(x) \notag
\\ & =   \sum_{i=1}^N \left \lVert \frac{q_i}{\sqrt{w}_i} - \tilde{v}_i \right \rVert^2_{\rho}\notag
\\ & =N\left(\frac{\tau-\sqrt{\S}}{2}\right)^2
+ 4N^{3/2} \left(\frac{1}{\sqrt{1-N \tau}} -1 \right)\,.\label{estFQFtilda}
\end{align}
It then follows from Proposition~\ref{LemmaConeWasserstein} and Proposition~\ref{prop:muQcone} that $\mu$ has an orthogonal cone structure with parameters $\left(\sigma+s,\delta+t^2, \frac{1-\sin(s)}{\sqrt{w_{max}}}\right)$ with $\delta\in[\delta^\ast,1)$ as given in Proposition~\ref{prop:muQcone} for any $s,t>0$ satisfying assumption \eqref{muconehyp}.
\end{proof}

\subsection{From discrete to continuum: convergence results for the spectrum of $\Delta_n$}
\label{App:A}

In this section we connect the spectrum of the graph Laplacian $\Delta_n$ with the spectrum of  $\Delta_\rho$. For this purpose, we modify our notation slightly and
write $L^2(\nu)$ for $L^2(\rho)$ to allow for comparison with $L^2(\nu_n)$ (recall \eqref{Empirical}). Furthermore,
we write $\langle u, v \rangle_{L^2(\mu)} := \int_\M u v \: d\mu(x)$ for
 $\mu = \nu$ or $\nu_n$.

\begin{theorem}
\label{TheoremTranps}
Let $\M$, $\rho$, $\veps$ and $n$ be as in Assumptions \ref{assumption-on-rho-contrast}. Let $u_{n,1}, \dots, u_{n,N}$ be the first $N$ eigenvectors of the kernelized graph Laplacian $\Delta_n$ (i.e. the graph with weights defined in \eqref{KernelizedWeights}).
Then, for every $\beta>1$ there exists a constant $C_\beta>0$ such that with probability at least $1- C_\beta n^{-\beta}$, there exists a map $T_n : \M \rightarrow \left\{\x_1, \dots, \x_n \right\}$ satisfying
\begin{equation}
\nu(T_n^{-1}(\{ \x_i \})) = \frac{1}{n}, \quad \forall i=1, \dots, n, 
\label{TranspEmpi}
\end{equation}
and
\begin{align}
  \| g_j  - u_{n,j}\circ T_n\|_{L^2(\nu)}^2 
  &\leq c_\M \left( \frac{\lambda_N}{\lambda_{N+1} - \lambda_N} \right)
  \left( \veps + \frac{\log(n)^{p_m}}{\veps n^{1/m}} \right)\,,
  \label{thm:erroreigen}
\end{align}
  for some  orthonormal functions $g_1, \dots, g_N \in L^2(\nu)$ belonging to $U$, the span of the first $N$ eigenfunctions of $\Delta_\rho$ with respect to $\langle\cdot,\cdot\rangle_\rho$,
  and a constant $c_\M>0$ depending only on $\M$, $N$, $\rho^\pm$, $C_{Lip}$ and $\eta$. In the above, $\lambda_N$ and $\lambda_{N+1}$ denote the $N$-th and the $(N+1)$-th eigenvalues of $\Delta_\rho$.
\end{theorem}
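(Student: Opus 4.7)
The argument follows the variational strategy for graph-to-manifold spectral convergence developed in \cite{GarciaTrillosSlepcev2015,BIK,Moritz}, adapted to the kernelized weights \eqref{KernelizedWeights}. The first ingredient is an $\infty$-optimal transport map: under Assumption~\ref{assumption-on-rho-contrast}, the concentration estimate of \cite{GarciaTrillosSlepcev2015} yields, with probability at least $1-C_\beta n^{-\beta}$, a measurable map $T_n:\M\to\M_n$ satisfying \eqref{TranspEmpi} and
\[ \|T_n-\mathrm{Id}\|_\infty \;\leq\; c_\M\,\frac{\log(n)^{p_m}}{n^{1/m}}. \]
Using $T_n$ I introduce a discretization operator $\mathcal{P}_n u := u\circ T_n$ from $L^2(\nu)$ to $L^2(\nu_n)$ and a matching interpolation $\mathcal{I}_n:L^2(\nu_n)\to L^2(\nu)$ obtained by mollifying at scale $\veps$. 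Because $\|T_n-\mathrm{Id}\|_\infty\ll\veps$ by \eqref{ineq:assump}, the pair $(\mathcal{P}_n,\mathcal{I}_n)$ acts near-isometrically between $L^2(\nu)$ and $L^2(\nu_n)$ with error of order $\veps+\log(n)^{p_m}/(\veps n^{1/m})$.

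The core technical step is a two-sided comparison of Dirichlet forms. Writing $\mathcal{D}(u):=\int_\M|\nabla u|^2\,\rho\,dx$ and $\mathcal{D}_n(u_n):=\tfrac12\sum_{i,j}(\W_n)_{ij}(u_n(\x_i)-u_n(\x_j))^2$, I would prove that for any $u\in C^1(\M)$ (hence for $u\in U$),
\[ |\mathcal{D}_n(\mathcal{P}_n u)-\mathcal{D}(u)| \;\leq\; c_\M\Bigl(\veps+\tfrac{\log(n)^{p_m}}{\veps n^{1/m}}\Bigr)\,\mathcal{D}(u), \]
together with the symmetric upward estimate $\mathcal{D}(\mathcal{I}_n u_n)\leq \mathcal{D}_n(u_n)+\text{(small error)}$. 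The double density normalization $1/\sqrt{d_\veps(\x_i)\,d_\veps(\x_j)}$ in \eqref{KernelizedWeights} is precisely what forces the limiting bilinear form to coincide with the one associated with $\Delta_\rho$ in the form \eqref{ContLaplaciansRho}, rather than some other $\rho$-weighted operator. The rate combines three sources of error: the nonlocal-to-local convergence of $\eta_\veps\ast u$ to $u$ (smooth truncation error $O(\veps)$), the transport error $\|T_n-\mathrm{Id}\|_\infty$ amplified by the Lipschitz constant of $\eta_\veps$ (giving $\log(n)^{p_m}/(\veps n^{1/m})$), and a Bernstein-type concentration of the kernel density estimator $d_\veps/n$ around $\rho$ using the Lipschitz regularity of $\rho$ from Assumption~\ref{assumption-on-rho-contrast}.

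The final step converts the form comparison into eigenvector convergence via Courant--Fischer and a quantitative Davis--Kahan argument. The two-sided form estimate first yields $|\lambda_{n,j}-\lambda_j|\leq c_\M\,\lambda_j(\veps+\log(n)^{p_m}/(\veps n^{1/m}))$ for $j\leq N+1$; by \eqref{ineq:assump}, this perturbation remains below $(\lambda_{N+1}-\lambda_N)/2$, so the spectral gap at level $N$ is preserved. Then $\mathcal{I}_n u_{n,j}$ is an approximate eigenfunction of $\Delta_\rho$ with eigenvalue near $\lambda_j$, and Davis--Kahan bounds the component of $\mathcal{I}_n u_{n,j}$ in $U^\perp\subset L^2(\nu)$ by the residual $\|(\Delta_\rho-\lambda_{n,j})\mathcal{I}_n u_{n,j}\|$ divided by the spectral gap, producing exactly the factor $\lambda_N/(\lambda_{N+1}-\lambda_N)$ in \eqref{thm:erroreigen}. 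Choosing $g_1,\dots,g_N$ to be the orthonormal basis of $U$ minimizing $\sum_j \|g_j-\mathcal{I}_n u_{n,j}\|_{L^2(\nu)}^2$ (an orthogonal Procrustes alignment) and then pulling back by $T_n$ yields the claimed bound.

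The principal obstacle is the Dirichlet-form comparison in the second step: one must propagate the empirical fluctuations of $d_\veps$ through the doubly-normalized bilinear form without losing the sharp $\veps+\log(n)^{p_m}/(\veps n^{1/m})$ dependence, under only the Lipschitz regularity of $\rho$. These estimates have essentially been carried out in \cite{Moritz}, and Section~\ref{App:A} will amount to verifying that replacing their single normalization by the kernelized double normalization neither deteriorates the probabilistic estimates nor alters the identity of the limiting operator $\Delta_\rho$ given in \eqref{ContLaplaciansRho}.
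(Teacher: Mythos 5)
Your plan follows essentially the same route as the paper: an $\infty$-OT map $T_n$ with rate $\log(n)^{p_m}/n^{1/m}$, discretization/interpolation operators whose Dirichlet forms are compared two-sidedly with error $\veps + \log(n)^{p_m}/(\veps n^{1/m})$ (imported from \cite{BIK,Moritz} after checking the kernelized normalization), eigenvalue convergence preserving the gap, a Davis--Kahan-type projection bound producing the $\lambda_N/(\lambda_{N+1}-\lambda_N)$ factor, and a nearest-orthonormal-basis (Procrustes) alignment to obtain $g_1,\dots,g_N$ — which is exactly Lemma~\ref{BH-finite-dim-orht-basis}. The only cosmetic differences are that the paper discretizes by averaging over transport cells rather than pointwise evaluation, and controls the kernel density estimator $d_\veps/n$ deterministically through the $\infty$-OT distance rather than by a separate concentration argument.
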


\begin{remark}
  Relation \eqref{TranspEmpi} is simply saying that the map $T_n$ is a transportation map from $\nu$ to $\nu_n$. $T_n$ here is as in \cite{Moritz}, i.e. the $\infty$-optimal transport ($\infty$-OT) map between $\nu$ and $\nu_n$. It was proved in \cite{Moritz} that the $\infty$-OT cost scales like $\frac{\log(n)^{p_m}}{ n^{1/m}}$. Notice that this is the only term on the right hand side of \eqref{thm:erroreigen} that explicitly depends on $n$.  
\end{remark}

The proof of Theorem~\ref{TheoremTranps} relies on a comparison between the Dirichlet forms associated to the operators $\Delta_n$ and $ \Delta_\rho$ by way of careful interpolation and discretization of discrete and continuum functions. Only small modifications to the proofs in \cite{BIK,Moritz} are necessary and in what follows we give a thorough explanation of the steps that need to be adjusted. For simplicity, we largely follow the notation in \cite{Moritz}.

Let us summarize our strategy for the proof. The Dirichlet forms associated to $\Delta_n$ and $\Delta_\rho$ are respectively defined as
\begin{equation}
b_n(u_n):=  \frac{1}{2n} \sum_{i,j} \W_{ij}  | u_n(\x_i) - u_n(\x_j) |^2 , \quad u_n \in L^2(\nu_n),
\label{def:bn}
\end{equation}
and
\begin{equation}
D(u):= \frac{1}{2}\int_{\M} | \nabla u |^2 \rho(x) dx    , \quad u \in H^1(\M, \rho),
\label{def:D}
\end{equation}
where we recall $\W_{ij}$ are the weights defined in \eqref{KernelizedWeights}. We will show that 
\begin{align*}
 b_n(Pf) &\leq (1+ er(n, \veps)) D(f) , \quad \forall f \in L^2(\nu)  \,,     \\
 D(If_n) &\leq (1+er(n,\veps)) b_n(f_n), \quad \forall f_n \in L^2(\nu_n). 
\end{align*}
for appropriately defined  {\it discretization} and {\it interpolation} maps $P$ and $I$; the $er(n,\veps)$ term does not depend on $f$ or $f_n$, and is small for $n$ large, and $\veps$ small but larger than a certain quantity that depends on $n$ that we will introduce below.  Together with the properties of the maps $P$ and $I$ (see  Lemma \ref{lem:Dirichlet}), these inequalities allow us to compare the spectra of the discrete and continuous operators.

To define the maps $P$ and $I$ we first introduce the transport maps $T_n$ constructed in
\cite[Theorem~2]{Moritz}.
\begin{proposition}\label{T-n-definition}
  For a given $\beta>1$, there
  exists a constant $C_\beta>0$ depending only on $\beta$ so that with probability at least $1- C_\beta n^{-\beta}$ there exists a map $T_n : \M \rightarrow \{ \x_1, \dots, \x_n \}$ for which:
\begin{enumerate}
\item $\nu(T_n^{-1} (\x_i) ) = 1/n$ for all $i=1, \dots, n$,
\item $\delta_n:= \esssup_{x \in \M} d_\M (T_n(x), x ) \leq C \frac{\log(n)^{p_m}}{n^{1/m}}$, 
\end{enumerate}
where  $C = C( \M, \rho^-, \rho^+, \beta) >0$, $p_m = 3/4$ for $m=2$ and $p_m=1/m$ for $m \geq 3$, and $d_\M$ represents the geodesic distance in $\M$.
\end{proposition}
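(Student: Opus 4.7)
The plan is to recover Proposition~\ref{T-n-definition} essentially from \cite[Theorem 2]{Moritz} (and the closely related argument in \cite{GarciaTrillosSlepcev2015}): the map $T_n$ will be the $\infty$-optimal transport map between $\nu$ and $\nu_n$, and the content of the statement is a quantitative upper bound on the $\infty$-Wasserstein distance $W_\infty(\nu,\nu_n)$ on the manifold $\M$. Since $\nu$ is absolutely continuous with density bounded between $\rho^-$ and $\rho^+$ and $\nu_n$ is a sum of $n$ equal Dirac masses, the existence of \emph{some} map $T_n$ satisfying (1) follows from standard semi-discrete transport theory: any Borel partition $\{V_i\}_{i=1}^n$ of $\M$ with $\nu(V_i)=1/n$ yields a valid $T_n$ by setting $T_n\equiv \x_i$ on $V_i$. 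The real work is therefore to produce such a partition whose cells have small diameter.

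The construction proceeds by a multiscale argument. First, cover $\M$ by finitely many coordinate charts in which the geodesic distance is bi-Lipschitz equivalent to the Euclidean distance; this reduces the problem to a matching problem chart-by-chart, essentially inside a Euclidean cube. Inside each chart, introduce a nested family of dyadic partitions $\{Q_j^{(k)}\}$ at scales $2^{-k}$ for $k=0,1,\dots,K$ with $2^{-K}\sim n^{-1/m}$. On the ``good event" one wants, Bernstein's inequality applied to each dyadic cell together with a union bound over the roughly $n$ cells (across all scales) guarantee that, uniformly in $(j,k)$,
\[
\left| \#\{i : \x_i \in Q_j^{(k)}\} - n\,\nu(Q_j^{(k)}) \right| \lesssim \sqrt{n\,\nu(Q_j^{(k)}) \log n}.
\]
Choosing the Bernstein constant large as a function of $\beta$ is precisely what generates the factor $C_\beta$ in front of $n^{-\beta}$, while the constant $C$ in (2) absorbs the chart data, $\rho^\pm$, and other geometric information.

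Conditional on the good event, the partition $\{V_i\}$ is constructed recursively on scale. Starting from the coarsest level, one transfers $\nu$-mass between adjacent cells $Q_j^{(k)}$ so that, after redistribution at scale $k$, the $\nu$-mass in each cell agrees exactly with the empirical mass predicted at that scale; the $\nu$-mass transferred between two neighboring cells at scale $k$ moves a geodesic distance $O(2^{-k})$, and by the Bernstein bound the amount of such transferred mass per cell is at most $O(\sqrt{(\log n)/(n\,\nu(Q_j^{(k)}))})$. Iterating this matching from coarse to fine down to the scale $n^{-1/m}$ produces a partition whose cells have diameter at most the sum of displacements. Tracking the \emph{worst-case} particle displacement through this multiscale bookkeeping, rather than the average, yields the $L^\infty$ bound $\delta_n \lesssim (\log n / n)^{1/m}$ for $m\geq 3$.

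The main obstacle is the sharpness of the logarithmic exponent in the borderline dimension $m=2$: a naive multiscale summation produces a contribution of order $(\log n)^{1/2}$ at essentially every scale, and to recover the exponent $p_2=3/4$ one needs the more careful variance accounting of Ajtai--Koml\'os--Tusn\'ady, transferred to the manifold setting via the bi-Lipschitz chart argument. Besides this delicate low-dimensional balancing, the remaining steps are standard: the bounded density $\rho^\pm$ is used to identify $\nu$-mass with Lebesgue mass up to multiplicative constants, the smoothness and compactness of $\M$ furnish the bi-Lipschitz charts, and condition (1) is built into the construction by enforcing the exact $\nu$-mass balance at the finest dyadic scale, so $T_n^{-1}(\x_i)$ is automatically one of the cells of $\nu$-mass $1/n$.
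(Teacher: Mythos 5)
The paper does not actually prove this proposition: it is imported verbatim from \cite[Theorem~2]{Moritz} (which in turn rests on the $\infty$-transport estimates of \cite{GarciaTrillosSlepcev2015} and, for $m=2$, on Ajtai--Koml\'os--Tusn\'ady-type matching results). Your proposal therefore does something the paper never attempts, namely reconstruct the proof of the cited result, and your outline is faithful to how that proof actually goes: item (1) reduces to exhibiting a Borel partition of $\M$ into cells of $\nu$-mass $1/n$ (the semi-discrete transport step), bi-Lipschitz charts reduce the manifold problem to a cube where $\rho^\pm$ lets you trade $\nu$-mass for Lebesgue mass, and the quantitative bound in (2) comes from a coarse-to-fine dyadic matching controlled by Bernstein's inequality plus a union bound, with the Bernstein constant tuned to $\beta$ producing $C_\beta$. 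You also correctly isolate the one genuinely delicate point, the exponent $p_2=3/4$ in dimension two, which cannot be obtained from the naive scale-by-scale summation and requires the AKT variance accounting. Two caveats: first, what you have written is an outline rather than a proof --- the multiscale bookkeeping that converts per-scale mass discrepancies into a \emph{worst-case} particle displacement (as opposed to an average/$W_2$ bound) is asserted, not executed, and it is exactly there that the $L^\infty$ nature of the statement lives; second, a small patching argument is needed to glue the chart-local matchings into a single global partition (cells straddling chart boundaries). Neither caveat is a flaw in the approach; given that the paper's own ``proof'' is a citation, your reconstruction is the right one.
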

Given the map $T_n$, the discretization map  
\[P : L^2(\nu) \rightarrow L^2(\nu_n),\] 
is defined as the transformation that takes a given $f \in L^2(\nu)$ and maps it into the discrete function 
\[ Pf(\x_i) :=  n \int_{T_n^{-1}(\{ \x_i\})} f(x)  \rho(x) dx, \quad i=1, \dots, n .  \]
For the interpolation map $I$, we first let $P^*$ be the adjoint of $P$ with respect
to the $L^2(\nu_n)$ inner product, i.e. the map that satisfies
\[ \langle Pg, f_n \rangle_{L^2(\nu_n)} = \langle g, P^\ast f_n \rangle_{L^2(\nu)}, \quad \forall g \in L^2(\nu), \quad \forall f_n \in L^2(\nu_n), \]
which takes an arbitrary discrete function $f_n \in L^2(\nu_n)$ and returns a function $P^* f_n \in L^2(\nu)$ defined by
\[ P^* f_n(x) = f_n \circ T_n(x) , \quad x \in \M .\]
We also introduce a smoothing operator 
\[ \Sop_{\veps,n,0} f(x)  :=   \int_{\M}  \frac{1}{(\veps - 2 \delta_n)^m}\psi \left(\frac{d_\M(x,y)}{\veps - 2 \delta_n} \right)
  f(y) dy, \quad x \in \M , \quad f \in L^2(\nu), \]
which is a convolution operator with radial kernel 
\[   \psi (t) := \frac{1}{\alpha_\eta} \int_{t}^\infty \eta(s)s ds , \]
where we recall $\alpha_\eta:= \int_{0}^{\infty}\eta(r)r^{m+1}dr$ was defined in \eqref{alphaeta}, and where $d_\M(x,y)$ is the geodesic distance between points $x,y \in \M$.
We normalize the operator $\Sop_{\veps,n,0}$ to produce 
\[ \Sop_{\veps,n} f := \frac{1}{ \Sop_{\veps,n,0} \mathds{1}}  \Sop_{\veps,n,0} f, \quad f \in L^2(\nu), \]
where in the above $\Sop_{\veps,n,0}\mathds{1}$ denotes the application of $\Sop_{\veps,n,0}$ to the function that is identically equal to one; the normalization is introduced so as to ensure that $\Sop_{\veps,n}$ leaves constant functions unchanged. Finally, $I$ is  defined as the composition of $P^*$ with $\Sop_{\veps,n}$. Namely,
\[ I u_n := \Sop_{\veps,n} \circ P^* u_n , \quad u_n \in L^2(\nu_n)\,.\]
Note that for any $f_n\in L^2(\nu)$ it is guaranteed that $If_n\in H^1(\M, \rho)$, given that $\eta$ was assumed Lipschitz (and so in particular $\psi$ is $C^1$). \nc

\begin{lemma}[Discretization and interpolation errors]
\label{lem:Dirichlet}
Under the same assumptions on $\M$ and $\rho$ as in Theorem \ref{TheoremTranps} it follows that:
\begin{enumerate}
\item For every $f \in H^1(\M, \rho)$,
  \[   \left \lvert    \lVert Pf \rVert^2_{L^2(\nu_n)}- \lVert f \rVert^2_{L^2(\nu)}     \right \rvert \leq C_1 \left[ \delta_n \lVert f \rVert_{L^2(\nu)} D(f)^{1/2} +
      \left(\veps + \frac{\delta_n}{\veps}  \right) \lVert f \rVert^2_{L^2(\nu)}
    \right]. \]
\item  For every $f \in H^1(\M, \rho)$,
\[  b_n(Pf) \leq  \left[1+  C_2 \left(\veps + \frac{\delta_n}{\veps}\right)  \right] D(f).  \nc\]
\item  For every $f_n \in L^2(\nu_n)$ we have,
  \[  \left \lvert   \lVert  I f_n\rVert^2_{L^2(\nu)} - \lVert  f_n  \rVert^2_{L^2(\nu_n)}    \right \rvert  \leq C_3 \left[ \veps \lVert f_n \rVert_{L^2(\nu_n)} b_n(f_n)^{1/2} + \left(\veps  +  \frac{\delta_n}{\veps}\right) \lVert f_n \rVert^2_{L^2(\nu_n)}  \right] . \] 
\item For every $f_n \in L^2(\nu_n)$ such that $If_n\in H^1(\M, \rho)$,
\[ D(If_n) \leq   \left[1+C_4\left(\veps + \frac{\delta_n}{\veps}\right)  \right]  b_n(f_n). \]
\end{enumerate}
Here the constants  $C_1, C_2, C_3, C_4 >0$ only depend on lower and upper bounds for $\rho$ and its Lipschitz constant, and on $\M$ and $\eta$. The term $\delta_n$ was defined in Proposition \ref{T-n-definition}.
\end{lemma}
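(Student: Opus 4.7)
The plan is to follow the strategy of \cite{BIK,Moritz} closely, adapting the arguments to the specific normalization \eqref{KernelizedWeights}. The two basic building blocks are: (i) the isometry $\|P^\ast f_n\|_{L^2(\nu)}^2 = \|f_n\|_{L^2(\nu_n)}^2$, which is immediate from $\nu(T_n^{-1}(\x_i))=1/n$; and (ii) the observation that $Pf(\x_i)$ is the $\rho$-weighted average of $f$ over the cell $T_n^{-1}(\x_i)$, whose geodesic diameter is bounded by $2\delta_n$. Two systematic error sources will appear in all four bounds: the $O(\delta_n)$ Lipschitz displacement of $f$ (and of $\rho$) across a cell of $T_n$, and the $O(\veps + \delta_n/\veps)$ mismatch between the discrete and continuous kernels, which ultimately traces back to the fact that the kernel density estimator $d_\veps(\x_i)/n$ differs from $\rho(\x_i)$ by $O(\veps + \delta_n/\veps)$ by the Lipschitz continuity of $\rho$ and standard concentration for kernelized sums.

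For parts (1) and (3) the arguments reduce to cell-wise Poincar\'e-type estimates. For (1) I would use the orthogonality identity
\[
\|f\|_{L^2(\nu)}^2 - \|Pf\|_{L^2(\nu_n)}^2 = \sum_{i=1}^n \int_{T_n^{-1}(\x_i)} \bigl(f(x) - Pf(\x_i)\bigr)^2 \rho(x)\,dx,
\]
writing $f(x) - Pf(\x_i)$ as the $\rho$-average of $f(x) - f(y)$ for $y$ in the cell and then using $f(x) - f(y) = \int_0^1 \nabla f(\gamma(t))\cdot \gamma'(t)\,dt$ on a minimizing geodesic. Cauchy--Schwarz then produces the $\delta_n \|f\|_{L^2(\nu)} D(f)^{1/2}$ term, while the Lipschitz variability of $\rho$ on each cell supplies the $(\veps + \delta_n/\veps)\|f\|_{L^2(\nu)}^2$ term. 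For (3) I would write $If_n - P^\ast f_n = (\Sop_{\veps,n} - \mathrm{Id})P^\ast f_n$ and bound the convolution-to-identity distance by a standard smoothing estimate: the resulting energy involves differences of $P^\ast f_n = f_n \circ T_n$ over geodesic distance $\veps - 2\delta_n$, which is controlled by $b_n(f_n)$ after absorbing the $d_\veps/(n\rho) \approx 1$ approximation and the normalization by $\Sop_{\veps,n,0}\mathds{1}$.

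The heart of the lemma, and the main obstacle, lies in parts (2) and (4), which compare the two Dirichlet forms. For (2) I would combine
\[
|Pf(\x_i) - Pf(\x_j)|^2 \leq n^2 \int_{T_n^{-1}(\x_i)}\!\!\int_{T_n^{-1}(\x_j)} |f(x) - f(y)|^2 \rho(x)\rho(y)\,dx\,dy
\]
with the geodesic inequality $|f(x)-f(y)|^2 \leq d_\M(x,y)\int_{\gamma_{x,y}} |\nabla f|^2\,d\ell$, and then interchange the sum over edges with the line integrals. The crucial cancellation is that the denominator $d_\veps(\x_i)^{1/2} d_\veps(\x_j)^{1/2}$ in \eqref{KernelizedWeights} introduces precisely the factor $(\rho(\x_i)\rho(\x_j))^{-1/2}$ (to leading order) that turns the naive continuum limit $\int \rho(x)^2 |\nabla f|^2\,dx$ into the correct form $\int \rho |\nabla f|^2\,dx = 2 D(f)$. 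Part (4) is dual: differentiating $If_n$ under the integral produces a kernel proportional to $\eta_{\veps - 2\delta_n}$ because $\psi' \propto \eta$, and the resulting integral is recognized as the continuous counterpart of $b_n(f_n)$ once the $d_\veps \approx n\rho$ approximation is applied again. The main obstacle is propagating the kernel-density approximation $d_\veps/n \approx \rho$ uniformly in the data so that both the upper and lower comparisons pick up only the claimed multiplicative $(1 + C(\veps + \delta_n/\veps))$ error; the requisite uniform estimates on $d_\veps$ and on the geodesic distortion induced by $T_n$ are exactly what is imported from \cite{Moritz}, and are the reason Assumption~\ref{assumption-on-rho-contrast} cannot be fully dispensed with at this stage.
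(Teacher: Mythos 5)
Your proposal takes essentially the same route as the paper: both reduce the lemma to the discretization/interpolation machinery of \cite{BIK,Moritz} via the intermediate nonlocal energy weighted by $\sqrt{\rho(x)}\sqrt{\rho(y)}$, with the only genuinely new ingredient being the uniform kernel-density estimate $d_\veps(\x_i)/n = \rho(\x_i) + O(\veps + \delta_n/\veps)$, used in both directions, which converts the $(d_\veps(\x_i)d_\veps(\x_j))^{-1/2}$ normalization into the $(\rho(x)\rho(y))^{-1/2}$ factor needed to land on $D(f)=\tfrac12\int_\M|\nabla f|^2\rho\,dx$ rather than $\int_\M|\nabla f|^2\rho^2\,dx$. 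You correctly identify this cancellation as the heart of parts (2) and (4), which is exactly the modification the paper makes to \cite[Lemmas~13--14]{Moritz}.
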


Before we prove Lemma \ref{lem:Dirichlet} let us introduce an intermediate, non-local continuum Dirichlet energy
\[E_r(f) := \frac{2}{\alpha_\eta} \int_{\M} \int_\M  \eta\left( \frac{|x-y|}{r} \right) |  f(x) - f(y)  |^2  \sqrt{\rho(x)} \sqrt{\rho(y)} dx dy, \quad f \in L^2(\nu), \]
where $r>0$ is a length scale to be chosen later on.

\begin{remark}
\label{rem:DEr}
Observe that the Dirichlet energy $D$ and the non-local Dirichlet energy $E_r$ can be written as
\[  D(f) = \int_{\M} \lvert \nabla f \rvert^2 \tilde{\rho}^2 dx, \quad f \in H^1(\M, \rho), \]
\[E_r(f) :=  \frac{2}{\alpha_\eta} \int_{\M} \int_\M  \eta\left( \frac{|x-y|}{r} \right) |  f(x) - f(y)  |^2  \tilde \rho(x) \tilde \rho(y) dx dy, \quad f \in L^2(\nu), \]
where $\tilde{\rho}= \sqrt{\rho}$. We notice that $\tilde{\rho}\in C^1(\M)$ since it satisfies the same regularity
condition as $\rho$ given that $\rho$ is bounded away from zero. Hence, our continuum Dirichlet energies $D$ and $E_r$
take the same form as in \cite{Moritz}. As a consequence we may use all of the bounds relating $D$ and $E_r$ that were proven there.
\end{remark}

\begin{remark}
\label{rem:bn}
The only difference between the discrete Dirichlet form $b_n$ in \eqref{def:bn} and the one introduced in \cite{Moritz} is the normalization by the terms $\sqrt{d_\veps (\x_i)/n}$ that appear in the definition of
$\W$, i.e., \cite{Moritz} uses a different normalization of
the graph Laplacian. We notice that the term  $\sqrt{d_\veps (\cdot)/n}$ uniformly approximates the density $\rho$. More precisely, given that the kernel $\eta$ is assumed to be normalized, it follows from \cite[Lemma 18]{Moritz} that
\[ \max_{i=1, \dots, n}  \left \lvert  \frac{1}{n}d_{\veps}(\x_i) - \rho(\x_i) \right \rvert \leq C \left( \veps +  \frac{\delta_n}{\veps } \right) , \]
where we recall $\delta_n$ is the $\infty$-OT distance between $\nu_n$ and $\nu$ given in
Proposition~\ref{T-n-definition}, and the constant $C>0$ depends only on $\rho$ and geometric quantities associated to $\M$. We highlight that the above is a non-optimal estimate on the error of approximation of a kernel density estimator, but has the advantage of only depending on the $\infty$-OT distance between empirical and ground-truth measures. 
\end{remark}

\begin{remark}
\label{rem:PandI}
The maps  $P, P^*, \Sop_r$ and $I$ are defined identically  to the way they were defined in \cite{Moritz} and
hence we may use all properties proved there. 
\end{remark}

\begin{proof}[Proof of Lemma \ref{lem:Dirichlet}]
  Throughout the proof we use $C$ to denote a finite
  positive constant that depends only on 
  the lower and upper bounds on $\rho$, the Lipschitz constant of $\rho$, and geometric
  quantities of $\M$. This constant may change value from one instance to the next, unless its dependence is explicitly stated.
  
The first statement follows directly from \cite[Lemma~13(i)]{Moritz} after noticing that $\rho \leq C \sqrt{\rho}$, given that $\rho$ is bounded away from zero. The second statement follows almost exactly as in \cite[Lemma~13(ii)]{Moritz}, after making some small modifications that we now describe. First, by Remark \ref{rem:bn} and the assumptions on $\rho$ it follows that for all $i, j \in \{1, \dots, n \}$ and every $x\in T_n^{-1}(\{ \x_i\})$ and $y \in T_n^{-1}(\{\x_j\})$ we have
\[ \frac{n }{\sqrt{d_\veps(\x_i) d_\veps(\x_j) } } \leq (1+ C(\veps + \delta_n/\veps ))\frac{1}{\sqrt{\rho(x) \rho(y)}}\,.\]
Using the definition of $Pf$ and Proposition~\ref{T-n-definition} (1) it follows that 
\begin{align*}
b_n(Pf)&= \frac{2}{\alpha_\eta n\veps^{m+2}}\sum_{i=1}^n \sum_{j=1}^n   \frac{\eta\left( \frac{|\x_i-\x_j|}{\veps} \right)}{\sqrt{d_\veps(\x_i)} \sqrt{d_\veps(\x_j) }} 
\\& \hspace{2.75cm} \cdot  \left \lvert n^2\int_{T_n^{-1}(\{ \x_i\})}\int_{T_n^{-1}(\{\x_j \})}(f(x) - f(y))  \rho(x) \rho(y)dxdy \right \rvert^2  \notag
\\ &\leq \frac{2n}{\alpha_\eta \veps^{m+2}} \sum_{i=1}^n\sum_{j=1}^n \frac{\eta\left( \frac{|\x_i-\x_j|}{\veps} \right)}{\sqrt{d_\veps(\x_i)} \sqrt{d_\veps(\x_j) }}\\
&\hspace{2.75cm} \cdot \int_{T_n^{-1}(\{ \x_i\})}\int_{T_n^{-1}(\{\x_j \})}(f(x) - f(y) )^2 \rho(x) \rho(y)dxdy  \notag
\\& \leq \frac{ 2(1+ C (\veps + \delta_n /\veps))}{\alpha_\eta \veps^{m+2}}\sum_{i=1}^n \sum_{j=1}^n \eta\left(\frac{|\x_i-\x_j|}{\veps} \right)\\
&\hspace{2.75cm}\int_{T_n^{-1}(\{ \x_i\})}\int_{T_n^{-1}(\{\x_j \})}(f(x) - f(y) )^2 \sqrt{\rho}(x) \sqrt{\rho}(y)dxdy\,,
\end{align*}
where the first inequality follows by Jensen's inequality. Using the above bound together with Remark~\ref{rem:DEr} and proceeding identically to the proof
of \cite[Lemma~13(ii)]{Moritz} we can bound $b_n$ in terms of $E_r$. More precisely,
\begin{equation*}
  b_n(Pf) \leq \frac{ 1+ C( \veps + \delta_n /\veps) }{\veps^{m+2}}
  \left( E_{\veps' + 2 \delta_n}(f) + \frac{\tilde C \delta_n}{\veps} E_{2(\veps' + 2 \delta_n)}(f)   \right),
\end{equation*}
where $\veps' = (1 + C' \veps^2)\veps$, $C'$ is a constant intrinsic to
the manifold $\M$, and $\tilde C>0$ is a constant
depending only on $\eta$. At this point we can follow the rest of the proof
of \cite[Lemma~13(ii)]{Moritz} to obtain the desired result after bounding $E_r$
in terms of $D$:
\begin{align*}
\frac{1}{\veps^{m+2}} E_{\veps' + 2 \delta_n}(f)
&\le \left(1+C \left(\veps+\frac{\delta_n}{\veps}\right)\right) D(f)\,,\\
\frac{1}{ \veps^{m+2}}\frac{ \delta_n}{\veps} E_{2(\veps' + 2 \delta_n)}(f)
&\le C \frac{ \delta_n}{\veps}  \left( 1+ \veps+\frac{\delta_n}{\veps}\right) D(f).
\end{align*}

The third statement follows directly from \cite[Lemma 14 (i)]{Moritz} after noticing that thanks to Remark \ref{rem:bn} we have $\sqrt{d_\veps(\x_i) d_\veps(\x_j) }/n \leq C   $, where $C$ depends only on the upper bound for $\rho$. Indeed, this inequality allows us to upper bound the discrete Dirichlet energy introduced in \cite{Moritz} with a constant multiple of our discrete Dirichlet energy $b_n$. 
 \nc
 
For the fourth and final statement we proceed as in  \cite[Lemma 14(ii)]{Moritz},
but making a small modification. By Remark \ref{rem:bn} we
conclude that for all $i, j \in \{1, \dots, n \}$ and every $x\in T_n^{-1}(\{ \x_i\})$ and $y \in T_n^{-1}(\{\x_j\})$ we have
\[  \frac{1}{\sqrt{\rho(x) \rho(y)}} \leq \frac{n  (1+ C(\veps + \delta_n/\veps))}{\sqrt{d_\veps(\x_i) d_\veps(\x_j) } }. \]
The above inequality is used to replace the degree term $d_\veps/n$ with the density $\rho$, allowing us to reverse the bound in Lemma~\ref{lem:Dirichlet}(2) \nc. Having in mind the inequality, the proof of our statement is exactly as in \cite{Moritz}. Indeed,  we can follow \cite[Lemma~14(i)]{Moritz} and show that
\begin{equation}\label{bound-E-by-b-n}
  E_{\veps - 2 \delta_n}(P^\ast f_n)  \le \nc\left( 1 +  C \left( \veps +\frac{\delta_n}{\veps}\right) \right) \veps^{m+2}b_n(f_n). 
\end{equation}
In turn, \cite[Lemma~14(ii)]{Moritz} (recall remark \ref{rem:DEr}) gives
\begin{equation*}
  D(If_n) \le \left[1 + C \left(\veps + \frac{\delta_n}{\veps} \right) \right]
  \frac{1}{\veps^{m+2}} E_{\veps - 2 \delta_n} (P^\ast f_n),
\end{equation*}
for all $f_n \in L^2(\nu_n)$. Combining \eqref{bound-E-by-b-n} with the above inequality we deduce the desired result.

\end{proof}

With Lemma~\ref{lem:Dirichlet} at hand, we can obtain a precise relationship between eigenvalues of $\Delta_n$ and $\Delta_\rho$.

\begin{lemma}[Convergence rate for eigenvalues ]
\label{lem:eigen}
Let $\lambda_i$ be the $i$-th eigenvalue of $\Delta_\rho$ and let $\lambda_{n,i}$ be the $i$-th eigenvalue of $\Delta_n$.  Let $\beta>1$. Then
there exist constants $C, C_\beta >0$  such that for
sufficiently large $n$, with probability at least $1- C_\beta n^{-\beta}$, we have
\[ \lvert \lambda_{n,i} - \lambda_i \lvert\leq
  C \left( \veps + \frac{\delta_n}{\veps} +  \delta_n\sqrt{\lambda_i}\right)  \lambda_{i}, \quad i=1, \dots, N,  \]
where $C>0$ depends only on $\M$, $\eta$ and $\rho$ and $C_\beta >0$ depends only on $\beta$.
\end{lemma}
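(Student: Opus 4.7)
The plan is to deduce the eigenvalue bound from the min-max (Courant--Fischer) characterization combined with the Dirichlet form and norm estimates in Lemma \ref{lem:Dirichlet}. Recall that
\[
\lambda_i = \min_{\substack{V\subseteq H^1(\M,\rho)\\ \dim V = i}}\max_{\substack{f\in V\\ f\ne 0}} \frac{D(f)}{\|f\|^2_{L^2(\nu)}},\qquad
\lambda_{n,i} = \min_{\substack{V_n\subseteq L^2(\nu_n)\\ \dim V_n = i}}\max_{\substack{f_n\in V_n\\ f_n\ne 0}} \frac{b_n(f_n)}{\|f_n\|^2_{L^2(\nu_n)}}.
\]
I would work on the event of probability at least $1-C_\beta n^{-\beta}$ on which the transport map $T_n$ from Proposition \ref{T-n-definition} exists, so that all four estimates of Lemma \ref{lem:Dirichlet} are available, and set $\kappa:=\veps + \delta_n/\veps$ for brevity.

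For the upper bound on $\lambda_{n,i}$, take $V:=\mathrm{span}(u_1,\ldots,u_i)$ and consider $V_n:=P(V)\subseteq L^2(\nu_n)$. For any $f\in V$ with $\|f\|_{L^2(\nu)}=1$ one has $D(f)\le\lambda_i$, so Lemma \ref{lem:Dirichlet}(1) yields $\|Pf\|^2_{L^2(\nu_n)}\ge 1 - C(\delta_n\sqrt{\lambda_i}+\kappa)$, while Lemma \ref{lem:Dirichlet}(2) gives $b_n(Pf)\le (1+C\kappa)\lambda_i$. For $\veps, \delta_n$ small (which is guaranteed by Assumption \ref{assumption-on-rho-contrast}) the norm bound in particular forces $P$ to be injective on $V$, so $\dim V_n=i$. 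Taking the worst $f\in V$ and invoking the min-max inequality gives
\[
\lambda_{n,i}\le \frac{(1+C\kappa)\lambda_i}{1 - C(\delta_n\sqrt{\lambda_i}+\kappa)} \le \lambda_i\bigl(1+C'(\kappa+\delta_n\sqrt{\lambda_i})\bigr),
\]
using the elementary estimate $1/(1-x)\le 1+2x$ for $x\le 1/2$.

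For the matching lower bound I would symmetrically use the interpolation operator $I$. Take $V_n:=\mathrm{span}(u_{n,1},\ldots,u_{n,i})$ and consider $V:=I(V_n)\subseteq H^1(\M,\rho)$ (this subspace lies in $H^1$ by the Lipschitz regularity of $\psi$, as remarked after the definition of $I$). For any $f_n\in V_n$ with $\|f_n\|^2_{L^2(\nu_n)}=1$ we have $b_n(f_n)\le \lambda_{n,i}$, so Lemma \ref{lem:Dirichlet}(3) yields $\|If_n\|^2_{L^2(\nu)}\ge 1 - C(\veps\sqrt{\lambda_{n,i}}+\kappa)$ and Lemma \ref{lem:Dirichlet}(4) gives $D(If_n)\le (1+C\kappa)\lambda_{n,i}$. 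These bounds give injectivity of $I$ on $V_n$ and hence $\dim V=i$, so by min-max
\[
\lambda_i\le \frac{(1+C\kappa)\lambda_{n,i}}{1-C(\veps\sqrt{\lambda_{n,i}}+\kappa)} \le \lambda_{n,i}\bigl(1+C'(\kappa+\veps\sqrt{\lambda_{n,i}})\bigr).
\]
Combining both inequalities, and noting that the upper bound already forces $\sqrt{\lambda_{n,i}}$ to be comparable to $\sqrt{\lambda_i}$, one obtains $|\lambda_{n,i}-\lambda_i|\le C(\veps+\delta_n/\veps+\delta_n\sqrt{\lambda_i})\lambda_i$ as claimed.

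The main obstacle is not any single estimate but the careful bookkeeping that ensures (i) the discretization/interpolation maps remain injective on the relevant finite-dimensional subspaces so that the pushed-forward spans have full dimension $i$, and (ii) the $\sqrt{\lambda_i}$ factor coming from $D(f)^{1/2}$ in Lemma \ref{lem:Dirichlet}(1) is correctly tracked — this is what produces the $\delta_n\sqrt{\lambda_i}$ term in the statement, which is the only term beyond the basic $\kappa$-error. Both points work out thanks to Assumption \ref{assumption-on-rho-contrast}, which guarantees $\kappa$ small enough to absorb the denominators and ensures spectral gap control so the bound is nontrivial; these are exactly the steps where \cite{Moritz}'s argument carries over with only minor modifications arising from our slightly different normalization of $\W_n$ (accounted for in Remark \ref{rem:bn}).
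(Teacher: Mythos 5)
Your argument is correct and is essentially the paper's proof: the paper's entire proof of Lemma \ref{lem:eigen} is the single line ``using Lemma \ref{lem:Dirichlet}, follow exactly the proof of \cite[Theorem 4]{Moritz}'', and that proof is precisely the Courant--Fischer test-subspace argument with the discretization map $P$ (for the upper bound) and the interpolation map $I$ (for the lower bound) that you spell out, including the injectivity/dimension checks. The one bookkeeping wrinkle is that your lower-bound direction actually produces a cross term $\veps\sqrt{\lambda_{n,i}}\approx\veps\sqrt{\lambda_i}$ from Lemma \ref{lem:Dirichlet}(3) rather than the $\delta_n\sqrt{\lambda_i}$ appearing in the statement; this discrepancy is inherited from the cited result and is harmless under Assumption \ref{assumption-on-rho-contrast}, where $\veps\sqrt{\lambda_N}$ is controlled.
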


\begin{proof}
Using Lemma \ref{lem:Dirichlet}, we can follow exactly the proof of \cite[Theorem 4]{Moritz}. 
\end{proof}

Now, we are ready to prove Theorem \ref{TheoremTranps}.

\begin{proof}[Proof of Theorem \ref{TheoremTranps}]
  Throughout this proof we use $C$ to denote a finite positive constant that
  depends only on $\M$, $N$, $\rho^\pm$, $C_{Lip}$ and $\eta$.
  This constant may change from one instance to the next.
  
  Let $u_{n,1}, \dots, u_{n,N}$ be the first $N$ eigenfunctions of $\Delta_n$ with unit norm. We can assume they form an orthonormal basis with respect to $\langle\cdot,\cdot\rangle_{\nu_n}$, with corresponding eigenvalues 
   \[ \lambda_{n,1} \leq \dots \leq \lambda_{n, N}. \]
We now put \cite[Lemma 7.3]{BIK} together with Lemmata \ref{lem:Dirichlet} and \ref{lem:eigen}, as well as Remark \ref{rem:PandI} to conclude that for every $j=1, \dots, N$ we have
\begin{equation}\label{I-Proj-dist}
  \lVert  Iu_{n,j} - \PN( I u_{n,j}  ) \rVert^2_{L^2(\nu)} \leq \frac{C_{\M, N} \lambda_{N}}{\lambda_{N+1}-\lambda_{N}}\left(  \veps  + \frac{\delta_n}{\veps}   \right) =: \gamma_0^2,
\end{equation}
where 
$\PN$ denotes the projection onto $U$, the span of the first $N$ eigenfunctions of $\Delta_\rho$
and $C_{\M, N}>0$ is a constant depending on $\M$ and $N$ only.

We need to show that the functions $\PN (Iu_{n,1}), \dots, \PN(Iu_{n,N})$ can
be modified slightly to form an orthonormal basis for $U$. First, by \eqref{I-Proj-dist} we have
\begin{align*}
  \| I u_{n,j} \|_{L^2(\nu)} - \gamma_0 \le \| \PN (I u_{n,j}) \|_{L^2(\nu)} \le
   \| I u_{n,j} \|_{L^2(\nu)} + \gamma_0\,.
\end{align*}
Next, we find a bound on $\| I u_{n,j} \|_{L^2(\nu)}$ using Lemma~\ref{lem:Dirichlet}(3). To control the $b_n(u_{n,j})$ term on the right hand side of Lemma~\ref{lem:Dirichlet}(3) we make use of the bound on the eigenvalues provided in Lemma~\ref{lem:eigen},
\begin{align}
\label{bnest}
\begin{split}
 b_n(u_{n,j})&= \langle u_{n,j}\,,\,\Delta_n u_{n,j}\rangle_{L^2(\nu_n)}
 =  \lambda_{n,j}\\
 &\le C \left( 1+\veps + \frac{\delta_n}{\veps} +  \delta_n\sqrt{\lambda_N}
    \right)  \lambda_{N}
  \\& \leq C\lambda_N  \,,
 \end{split}
\end{align}
and so, since the $u_{n,j}$ are normalized, we obtain
\begin{equation}\label{Iubound}
 \left|\| I u_{n,j} \|_{L^2(\nu)}^2-1\right|
 \le C \left(\veps\sqrt{\lambda_N}    +  \veps + \frac{\delta_n}{\veps} \right).
\end{equation}
Combining with the estimate for $  \lVert \PN I u_{n,j} \rVert_{L^2(\nu)}$ yields (for $\veps$ and $\delta_n/\veps$ small enough and $n$ large enough so that the right-hand side in the last estimate is less or equal to 1) 
\begin{equation}\label{PN-close-to-Id}
  1-  \gamma_1 \leq \lVert \PN(I u_{n,j})  \rVert_{L^2(\nu)} \leq  1 + \gamma_1 \quad \forall j=1, \dots, N,
\end{equation}
where
\begin{align*}
  \gamma_1:= C \left[  \veps\sqrt{\lambda_N}    +  \veps + \frac{\delta_n}{\veps}  \right]^{1/2} + \gamma_0\,.
\end{align*}

Lemma~\ref{lem:Dirichlet}(3) also allows us to bound the difference of inner products. For $i \not = j$ we have 
\[ \langle I u_{n,j}, I u_{n,i} \rangle_{L^2(\nu)} =  \frac{1}{2}\left(  \lVert I u_{n,j} \rVert^2_{L^2(\nu)} + \lVert I u_{n,i} \rVert^2_{L^2(\nu)} - \lVert I u_{n,j}- Iu_{n,i} \rVert^2_{L^2(\nu)}  \right), \]
\[ 0  =\langle u_{n,j},  u_{n,i} \rangle_{L^2(\nu_n)} =  \frac{1}{2}\left(  \lVert  u_{n,j} \rVert^2_{L^2(\nu_n)} + \lVert  u_{n,i} \rVert^2_{L^2(\nu_n)} - \lVert  u_{n,j}- u_{n,i} \rVert^2_{L^2(\nu_n)}  \right). \]
Subtracting the above identities and using Lemma~\ref{lem:Dirichlet}(3) we obtain
\begin{align*}
  \rvert \langle Iu_{n,j} , Iu _{n,i} \rangle_{L^2(\nu)} \lvert  \leq  C \left(\veps\sqrt{\lambda_N}    +  \veps + \frac{\delta_n}{\veps} \right).
\end{align*}
From this, and the fact that both $ \lVert I{u_{n,j}} - \Pi_N Iu_{n,j}  \rVert_{L^2(\nu)}$ and  $ \lVert I u_{n,i} - \Pi_N Iu_{n,i}  \rVert_{L^2(\nu)} $ are smaller than $\gamma_0$, we deduce that 
\begin{align}
\begin{split}
| \langle \Pi_N I u_{n,i} , \Pi_N I u_{n,j}  \rangle_{L^2(\nu)} | & \leq |\langle I u_{n,i} , I u_{n,j}  \rangle_{L^2(\nu)}|  +  \lvert  \langle I u_{n,j}, I u_{n,i} - \Pi_N I u_{n,i} \rangle_{L^2(\nu)}  \rvert
\\& + \lvert  \langle \Pi_N I u_{n,i}, I u_{n,j} - \Pi_N I u_{n,j} \rangle_{L^2(\nu)}  \rvert
\\ & \leq   C \left(\veps\sqrt{\lambda_N}    +  \veps + \frac{\delta_n}{\veps} \right) +  C\gamma_0=: \gamma_2 , \quad \forall i \not = j,
\end{split}
\label{innerprodproj}
\end{align}
where the second inequality follows from Cauchy-Schwartz, the fact that $\|\Pi_N  u\|_{L^2(\nu)} \le \|u\|_{L^2(\nu)} $ for any $u\in L^2(\nu)$, and from $\|I u_{n,i}\|_{L^2(\nu)}\le C$ by \eqref{Iubound}.

With \eqref{PN-close-to-Id}, \eqref{innerprodproj} and Assumptions \ref{assumption-on-rho-contrast} (guaranteeing the smallness of $\gamma_0$, $\gamma_1$ and $\gamma_2$) we can use Lemma~\ref{BH-finite-dim-orht-basis} and Remark \ref{App:linearIndep}, to deduce the existence of an orthonormal system $g_1, \dots, g_N$ for $U$ satisfying: 
\[ \lVert\PN I u_{n,j} - g_j\rVert_{L^2(\nu)}   \leq
\sqrt{N} \left( \frac{1}{\sqrt{1 - N \gamma_2}} -1 \right), \forall j=1, \dots, N . \]

Using \eqref{I-Proj-dist} and expanding in $N \gamma_2$ we deduce 
 \begin{align*}
   \lVert I u_{n,j} - g_j\rVert_{L^2(\nu)}^2 
 &\leq \left(\lVert I u_{n,j} - \PN I u_{n,j}\rVert_{L^2(\nu)}  +\lVert \PN I u_{n,j} - g_j\rVert_{L^2(\nu)}\right)^2\\
 &\leq
   \left( \gamma_0 + \sqrt{N} \left( \frac{1}{\sqrt{1 - N \gamma_2}} -1 \right) \right)^2 \\
   &\le C \left( \gamma_0 +\frac{N^{3/2}}{2}\gamma_2+\frac{3N^{5/2}}{8}\gamma_2^2 \right)^2 
   =: \gamma_3, \quad \forall j=1, \dots,N.\nc
 \end{align*}
Now,
\begin{align*}
\lVert Iu_{n,j} - u_{n,j}\circ T_n \rVert_{L^2(\nu)}^2
&= \lVert \Sop_{\veps - 2 \delta_n} P^* u_{n,j} - P^*u_{n,j} \rVert_{L^2(\nu)}^2
\\& \leq\frac{ C \veps^2}{(\veps-2 \delta_n)^{m+2}}E_{\veps-2\delta_n}(P^* u_{n,j}) 
\\& \leq  C \veps^2  b_n(u_{n,j}) 
  \\
  & \leq  C \veps^2  \lambda_{N} =: \gamma_4,
\end{align*}
\nc
where we have used \cite[Lemma 8]{Moritz}, assuming $\delta_n < \veps/(m+ 5)$  to obtain the first inequality and where the second and third inequalities follow from \eqref{bound-E-by-b-n} and \eqref{bnest} respectively.
Using triangle inequality and the above estimates we obtain
\[
 \lVert u_{n,j} \circ T_n -g_j\rVert_{L^2(\nu)}^2 \le 2 ( \gamma_3 +  \gamma_4) .
\]

Plugging in the definitions of $\gamma_3$ and $\gamma_4$ and using Assumptions \ref{assumption-on-rho-contrast}, we may collect the highest order terms and deduce that for all $j=1, \dots, N$ we have
      \begin{align*}
       \lVert u_{n,j} \circ T_n -g_j\rVert_{L^2(\nu)}^2 &\le 
        C(1+N^{3/2}+N^3) C_{\M,N} \left( \frac{\lambda_N}{\lambda_{N+1} - \lambda_N} \right)
      \left( \veps + \delta_n/\veps \right)\,.
      \end{align*}
      where $C_{\M,N}$ is the constant in  \eqref{I-Proj-dist}.
 Proposition~\ref{T-n-definition}(2) implies that
      \begin{align*}
       \lVert u_{n,j} \circ T_n -g_j\rVert_{L^2(\nu)}^2 &\le c_{\M} \left( \frac{\lambda_N}{\lambda_{N+1} - \lambda_N} \right)
      \left( \veps + \frac{\log(n)^{p_m}}{\veps n^{1/m}} \right)\,,
      \end{align*}
      where $c_\M$ is a constant proportional to $C_{\M, N}$ and $C$ given in Proposition~\ref{T-n-definition}(2). This concludes the proof of the theorem.
\end{proof}
{}

Our goal is now to replace the terms $\lambda_N$ and $\lambda_{N+1}$ in \eqref{thm:erroreigen} with quantities that only depend on the parameters of the mixture model. A lower bound for $\lambda_{N+1}$ was already obtained in Proposition \ref{auxLemmaSigmaB}, and now we focus on obtaining an upper bound for $\lambda_{N}$. 

\begin{proposition}[Upper bound for $\lambda_N$]
\label{UppLambdaN}
Suppose that $\mathcal{S}^{1/2}N < 1$ and let $\lambda_N$ be the $N$-th eigenvalue of $\Delta_\rho$. Then,
\[ \lambda_N \leq \frac{N \mathcal{C}}{1- N \S^{1/2}}\,. \]
\end{proposition}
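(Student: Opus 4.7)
The plan is to apply the Courant--Fischer max--min characterization of $\lambda_N$ using the $N$-dimensional subspace $Q=\mathrm{Span}(q_1,\dots,q_N)$ as test space. Concretely, I would show that for every $u=\sum_{k=1}^N a_k q_k\in Q\setminus\{0\}$, the Rayleigh quotient $\int_{\M}|\nabla u|^2\rho\,dx\big/\int_{\M} u^2\rho\,dx$ is bounded above by $N\C/(1-N\S^{1/2})$, and then invoke the max--min formula. A preliminary step is to verify that under the hypothesis $N\S^{1/2}<1$ the functions $q_1,\dots,q_N$ are linearly independent, so that $\dim Q=N$; this will fall out of the Gram matrix computation below.

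For the numerator, I would first record the identity already used in the proof of Proposition~\ref{ProjectionsError}:
\[
\int_{\M}|\nabla q_k|^2\rho\,dx=\langle\Delta_\rho q_k,q_k\rangle_\rho=\frac{w_k}{4}\int_{\M}\left|\frac{\nabla\rho_k}{\rho_k}-\frac{\nabla\rho}{\rho}\right|^2\rho_k\,dx=w_k\C_k.
\]
Then, by the elementary Cauchy--Schwarz inequality $\bigl|\sum_k a_k v_k\bigr|^2\le N\sum_k a_k^2|v_k|^2$ applied pointwise to the gradient vectors, I obtain
\[
\int_{\M}|\nabla u|^2\rho\,dx\le N\sum_{k=1}^N a_k^2\,w_k\C_k\le N\C\sum_{k=1}^N w_k a_k^2.
\]

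For the denominator I expand $\|u\|_\rho^2=\sum_{j,k}a_ja_k\langle q_j,q_k\rangle_\rho$ and analyze the Gram matrix $G_{jk}=\langle q_j,q_k\rangle_\rho$. The diagonal gives $G_{kk}=\int_{\M}w_k\rho_k\,dx=w_k$, and for $j\ne k$ an application of Cauchy--Schwarz against $\rho$ yields
\[
\langle q_j,q_k\rangle_\rho=\sqrt{w_j w_k}\int_{\M}\sqrt{\rho_j\rho_k}\,dx\le \sqrt{w_j w_k}\left(\int_{\M}\frac{\rho_j\rho_k}{\rho}\,dx\right)^{1/2}\le\sqrt{w_j w_k}\,\S^{1/2}.
\]
Setting $b_k:=a_k\sqrt{w_k}$ and using $\sum_{j\ne k}|b_j||b_k|\le N\sum_k b_k^2$, this off-diagonal estimate gives
\[
\|u\|_\rho^2\ge\sum_k w_k a_k^2-\S^{1/2}\sum_{j\ne k}|b_j||b_k|\ge(1-N\S^{1/2})\sum_{k=1}^N w_k a_k^2.
\]
In particular the Gram matrix is positive definite when $N\S^{1/2}<1$, confirming $\dim Q=N$.

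Combining the two bounds, the Rayleigh quotient on $Q$ is uniformly bounded by $N\C/(1-N\S^{1/2})$, and the Courant--Fischer max--min principle (in the form used e.g.\ in the proof of Proposition~\ref{auxLemmaSigmaB}) gives $\lambda_N\le N\C/(1-N\S^{1/2})$. The only mildly delicate point is the Gram-matrix lower bound: one must be careful that the cross-terms are controlled \emph{uniformly in $a$}, which is why we pass through the rescaling $b_k=a_k\sqrt{w_k}$ and use the coarse bound $\sum_{j\ne k}|b_j||b_k|\le N\sum_k b_k^2$; any sharper $\ell^1\!-\!\ell^2$ inequality would only improve constants and is not needed for the stated estimate.
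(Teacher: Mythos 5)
Your proposal is correct and follows essentially the same route as the paper: testing the min--max principle on $Q=\Span\{q_1,\dots,q_N\}$, bounding the Dirichlet energy by $N\C\sum_k w_k a_k^2$ via the identity $\|\nabla q_k\|_\rho^2=w_k\C_k$, and bounding $\|u\|_\rho^2$ from below by $(1-N\S^{1/2})\sum_k w_k a_k^2$ through the same Gram-matrix estimates $\langle q_k,q_k\rangle_\rho=w_k$ and $|\langle q_j,q_k\rangle_\rho|\le\sqrt{w_jw_k}\,\S^{1/2}$. The only cosmetic differences are that you use a pointwise Cauchy--Schwarz for the numerator where the paper applies H\"older to the cross terms $\int\nabla q_k\cdot\nabla q_j\,\rho\,dx$ (both give the same bound), and you explicitly verify $\dim Q=N$ from positive definiteness of the Gram matrix, a point the paper leaves implicit.
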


\begin{proof}

To get an upper bound for $\lambda_{N}$ it is enough to use the min-max formula \cite[Theorem~8.4.2]{Buttazzo} for $\lambda_N$. In particular, since $Q:= \Span\{ q_1, \dots, q_N\}$ is $N$-dimensional it follows that 
\[\lambda_{N} \leq \max_{u \in Q}  \frac{ \langle \Delta_\rho u ,  u \rangle_\rho}{\langle u , u \rangle_\rho}. \]

Let $u \in Q$ for which $\langle u, u \rangle_\rho =1$. Then,
\[ u= \sum_{i=1}^N a_k q_k \]
for some scalars $a_k$ satisfying
\[ 1= \sum_{k=1}^N a_k^2 \lVert q_k \rVert_\rho^2 + \sum_{k=1}^N \sum_{j\not = k} a_k a_j \langle q_k, q_j \rangle_\rho  = \sum_{k=1}^N a_k^2 w_k + \sum_{k=1}^N \sum_{j\not = k} a_k a_j \langle q_k, q_j \rangle_\rho. \]
Now, 
\[ \left | \sum_{k=1}^N \sum_{j\not = k} a_k a_j \langle q_k, q_j \rangle_\rho \right| \leq \sum_{k=1}^N \sum_{j \not = k} |a_k| |a_j| \sqrt{w_k}\sqrt{w_j} \mathcal{S}^{1/2}  \leq N \S^{1/2} \sum_{k=1}^N a_k^2 w_k. \]
Thus, 
\[ \sum_{k=1}^N a_k^2 w_k \leq   \frac{1}{1- N\mathcal{S}^{1/2}}. \]
In addition, 
\[ \langle \Delta_\rho u , u \rangle_\rho = \sum_{k=1}^N\sum_{j=1}^N a_k a_j \int_{\M} \nabla q_k \cdot \nabla q_j  \rho dx \,.\]
Recall that $\lVert \nabla q_k\rVert_\rho^2=w_k \C_k$. Using H\"older's inequality, we obtain
\begin{align*}
  \langle \Delta_\rho u , u \rangle_\rho
&\le \sum_{k=1}^N \sum_{j=1}^N |a_k a_j| \left( \int_\M |\nabla q_k|^2 \rho dx  \right)^{1/2} \left( \int_\M |\nabla q_j|^2 \rho dx  \right)^{1/2} \\
&\leq   \mathcal{C} N \sum_{k=1}^N a_k^2 w_k
\leq \frac{N \mathcal{C}}{1- N \S^{1/2}}\,.
\end{align*}
We  conclude that 
\[ \lambda_N \leq \frac{N\mathcal{C}}{1-N\S^{1/2}}.\]
\end{proof}

\begin{remark}
In the previous result we used the trace of the operator $\Delta_\rho$ restricted to $Q$ to bound $\Ind_N$. This is not necessarily an optimal bound, but we remark that in the case when $\mathcal{C}/\Ind$ is small enough (in particular smaller than $1/N$), one can use this estimate to get a meaningful lower bound for the spectral gap $\lambda_{N+1}- \lambda_N$. More precisely, we obtain
\[ \lambda_{N+1}- \lambda_{N}\geq \left( \sqrt{\Ind(1 -N\S) } - \frac{\sqrt{\C N \S}}{(1-\S)} \right)^2  -\frac{N \C}{1- N \S^{1/2}}  .\]
While in general we think of $N$ as a relatively small number (representing the number of meaningful components in a data set) we would like to remark that $N$ in the estimates 
\[ \lambda_N \leq \frac{N\C}{1- N \S^{1/2}},\]
 can be replaced with $N_{eff}$ where $N_{eff}$ is the effective number of components any given component intersects. That is,
\[ N_{eff} = \max_{k=1, \dots, N}  \# \{ i\neq k \text{ s.t } \nu( supp(\rho_k)\cap supp(\rho_i))>0  \}. \]
 Estimates with better dependence on $N$ may be obtained in terms of a quantity analogue to $\C$ of the form:
\[ \max_{k =1, \dots, N} \int_{\M} | \nabla \log(\rho_k/\rho) |^3 \rho_k dx.\]

\end{remark}

Combining the lower bound for $\lambda_{N+1}$ from Proposition~\ref{auxLemmaSigmaB} and the upper bound for $\lambda_{N}$ we immediately deduce the following.

\begin{corollary}\label{cor:Transp}In Theorem \ref{TheoremTranps}, inequality \eqref{thm:erroreigen} can be replaced with:
\begin{equation}
  \begin{split}
    \int_{\M} | g_j(x)  - u_{n,j}\circ T_n(x)   |^2 d\nu(x) = \lVert g_j - u_{n,j}\circ T_n \rVert_{L^2(\nu)}^2& \le
    \phi(\S, \C, \Theta, N, \veps, n, m),
     \end{split}
\label{thm:erroreigenCor}
\end{equation}
for all $j=1, \dots, N$, where we recall
\begin{align}\label{def:phi}
  \phi(\S, \C, \Theta, N, \veps, n, m)& := c_\M
   \left(\frac{N\C}{1- N \S^{1/2}} \right)
  \left( \veps +  \frac{\log(n)^{p_m}}{\veps n^{1/m}}\right) \notag\\
       & \cdot  \left( \left( \sqrt{\Theta (1 - N \S)} - \frac{\sqrt{\C N \S}}{1- \S}\right)^2 - \frac{N \C}{1- N \S^{1/2}} \right)^{-1} ,
\end{align}
and 
$c_\M$ is a constant depending on $\M$, $\rho^\pm$, $C_{Lip}$, $\eta$ and $N$.
\end{corollary}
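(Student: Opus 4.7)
The plan is essentially to combine Theorem~\ref{TheoremTranps} with the explicit spectral bounds already proven in Sections~\ref{sec:ProofsMain}, so that the ratio $\lambda_N / (\lambda_{N+1} - \lambda_N)$ appearing in \eqref{thm:erroreigen} is replaced by a quantity depending only on the mixture model parameters $\mathcal{S}, \mathcal{C}, \Theta$ and $N$. Concretely, I would start from the conclusion \eqref{thm:erroreigen} of Theorem~\ref{TheoremTranps}, which already gives the desired left-hand side $\lVert g_j - u_{n,j}\circ T_n \rVert_{L^2(\nu)}^2$ bounded by a constant times $(\veps + \log(n)^{p_m}/(\veps n^{1/m}))$ multiplied by $\lambda_N/(\lambda_{N+1}-\lambda_N)$.

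Next, I would apply Proposition~\ref{UppLambdaN} (upper bound for $\lambda_N$) and Proposition~\ref{auxLemmaSigmaB} (lower bound for $\lambda_{N+1}$) to control these two eigenvalues in terms of $\mathcal{S}, \mathcal{C}, \Theta, N$. This gives
\[
\lambda_N \le \frac{N\mathcal{C}}{1-N\mathcal{S}^{1/2}}, \qquad \lambda_{N+1} \ge \left(\sqrt{\Theta(1-N\mathcal{S})} - \frac{\sqrt{\mathcal{C}N\mathcal{S}}}{1-\mathcal{S}}\right)^2,
\]
so that subtracting yields
\[
\lambda_{N+1}-\lambda_N \ge \left(\sqrt{\Theta(1-N\mathcal{S})} - \frac{\sqrt{\mathcal{C}N\mathcal{S}}}{1-\mathcal{S}}\right)^2 - \frac{N\mathcal{C}}{1-N\mathcal{S}^{1/2}}.
\]
Under the assumption stated at the end of Theorem~\ref{PropMuMun} that $\mathcal{S}$ and $\mathcal{C}/\Theta$ are small enough, this lower bound is strictly positive, so dividing is legitimate and yields an upper bound on $\lambda_N/(\lambda_{N+1}-\lambda_N)$ matching exactly the structure of $\phi(\mathcal{S}, \mathcal{C}, \Theta, N, \veps, n, m)$ in \eqref{def:phi}.

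Substituting this bound into \eqref{thm:erroreigen} gives the claim \eqref{thm:erroreigenCor} with $c_\M$ absorbing the constant from Theorem~\ref{TheoremTranps}. The only real subtlety, rather than an obstacle, is verifying the positivity of the denominator in $\phi$; this is precisely the standing smallness assumption made in the statement of the corollary (and already needed for $\lambda_{N+1} > \lambda_N$ in order to invoke Theorem~\ref{TheoremTranps} at all, via Assumption~\ref{assumption-on-rho-contrast}). Since the probabilistic event on which Theorem~\ref{TheoremTranps} holds already has probability at least $1 - C_\beta n^{-\beta}$, no new probabilistic estimates are needed and the corollary follows by direct substitution.
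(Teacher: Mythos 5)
Your proposal is correct and follows exactly the paper's route: the corollary is stated as an immediate consequence of substituting the upper bound on $\lambda_N$ from Proposition~\ref{UppLambdaN} and the lower bound on $\lambda_{N+1}$ from Proposition~\ref{auxLemmaSigmaB} into the ratio $\lambda_N/(\lambda_{N+1}-\lambda_N)$ appearing in \eqref{thm:erroreigen}, with positivity of the resulting denominator guaranteed by the standing smallness assumptions. Nothing further is needed.
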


\subsection{Proof of Theorem \ref{PropMuMun} }
 \label{sec:proof2}

 \begin{proof}[Proof of Theorem \ref{PropMuMun}]
Let $\beta>1$. From Corollary~\ref{cor:Transp}, we know that with probability greater than $ 1- C_\beta n^{-\beta}$, there exist a transportation map $T_n : \M \rightarrow \{\x_1, \dots, \x_n \}$ pushing forward $\nu$ into $\nu_n$ and an orthonormal set of functions $g_1, \dots, g_n$ in $U$ (the space generated by the first $N$ eigenfunctions of $\Delta_\rho$) satisfying
\begin{enumerate}
\item  $\sup_{x \in \M} d_\M(x, T_n(x)) \leq c_\M \frac{\log(n)^{p_m}}{n^{1/m}}$.
\item $\int_{\M} |g_i(x) - u_{i,n}(T_n(x))|^2 d \nu(x) \leq
  \phi(\S, \C, \Theta, N, \veps, n, m),$
\end{enumerate}
where the function $\phi$ is defined in \eqref{def:phi}
and used throughout this proof for convenience of notation.
Let $G(x) := ( g_1(x), \dots, g_N(x) )$ for $x \in \M$.  We deduce that  
\begin{align*}
&\int_{\M} \left| F_n\circ T_n (x) - G(x)   \right|^2  d \nu(x) 
= \sum_{i=1}^N \int_{\M}  | g_i(x) -u_{n,i}(T_n(x)) |^2 d \nu(x) 
\\ & \quad \leq N  \phi(\S, \C, \Theta, N, \veps, n, m).
\end{align*}
 We claim that the integral on the left hand side of the previous expression can be written as:
 \begin{equation*}
 \int_{\M} \left| F_n\circ T_n (x) -G(x)   \right|^2  d\nu(x)  =    \int_{\R^N  \times \R^N}   | x- y|^2 d \pi_n(x,y),   
 \label{aux11}
 \end{equation*}
 for a transportation plan $\pi_n \in \mathcal P(\R^N\times \R^N)$
 between the measures $G_\sharp \nu$ and $F_{n \sharp} \nu_n$. To see this,  let $\tilde{\pi}_n \in  \mathcal P(\M\times \M)$ be given by
 \[  \tilde{\pi}_n := (Id \times T_n) _{\sharp } \nu \,, \] 
 and let
 \[G\times F_n : \M \times \M \rightarrow \R^N \times \R^N, \]
 be given by
 \[G\times F_n : (x,y) \longmapsto  (G(x), F_n(y)).  \]
 Let $\pi_n  := (G \times F_n  )_{\sharp } \tilde{\pi}_n$ (i.e. the push-forward of $\tilde \pi_n$ by the map $G  \times F_n$ ). It is straightforward to check that $\pi_n$ is a
 transportation plan between $G_{\sharp}\nu$ and $ F_{n \sharp} \nu_n$, and moreover, 
 \begin{align*}
 \begin{split}
 \int_{\R^N \times \R^N} | x - y |^2 d \pi_n(x,y) &= \int_{\R^N \times \R^N} |x- y |^2  d (G \times F_n )_{\sharp } \tilde{\pi}_n(x,y) 
 \\& =  \int_{\M \times \M} | G(x)- F_n(y) |^2   d\tilde{\pi}_n(x,y)  
 \\& = \int_{\M \times \M} | G(x) - F_n(y) |^2 d (Id \times T_n)_{\sharp}\nu (x,y)
 \\& = \int_{\M} |G(x) - F_n\circ T_n (x) |^2 d \nu(x).
 \end{split}
 \end{align*}
 Therefore,
 \begin{align}\label{estGF_n}
 (W_2(G_{\sharp} \nu, F_{n \sharp} \nu_n))^2 &\leq  \int_{\M} \left| F_n\circ T_n (x) - G(x)   \right|^2  d\nu(x)\notag \\
&\leq N   \phi(\S, \C, \Theta, N, \veps, n, m).
 \end{align}

Since $g_1, \dots, g_N$ is an orthonormal basis for $U$, we conclude that there exists an orthogonal matrix $R$ such that for every $x \in \M$ we have
\[ G(x) = R F(x),  \]
where $F$ is the continuum spectral embedding as defined in \eqref{ContEmbedding}.

In order to show that the measure $\mu_n:=F_{n \sharp } \nu_n$ has an orthogonal cone structure, it is enough to show that the pushforward of $\nu_n$ through any orthogonal transformation of $F_n$ has an orthogonal cone structure. In particular, we will show that the measure $(R O^{-1} F_n)_{\sharp} \nu_n$ has an orthogonal cone structure,  where $R$ is defined as above, and $O$  is the orthogonal transformation chosen in the proof of Theorem~\ref{mainTheorem}:
\[ OF(x)=\tilde F (x) = \sum_{j=1}^N \frac{\PN(q_j)(x)}{\|\PN(q_j)\|_\rho} e_j\,.\]
Indeed, the measures $R^{-1}OF_{\sharp} \nu$ and $F^Q_{\sharp} \nu$ are close to each other in the Wasserstein sense: since $G=RO^{-1}\tilde F$,
\begin{align*}
 W_2(OR^{-1}F_{n \sharp} \nu_n, F^Q_\sharp \nu) 
 &\leq  W_2(OR^{-1}F_{n \sharp} \nu_n, \tilde F_\sharp \nu) + W_2(\tilde F_\sharp\nu, F^Q_\sharp \nu) \\
  &= W_2(F_{n \sharp} \nu_n, RO^{-1}\tilde F_\sharp \nu) + W_2(\tilde F_\sharp\nu, F^Q_\sharp \nu) \\
  &= W_2(F_{n \sharp} \nu_n, G_\sharp \nu) + W_2(\tilde F_\sharp\nu, F^Q_\sharp \nu) \\
&\leq \sqrt{N     \phi(\S, \C, \Theta, N, \veps, n, m)   }\\
&\qquad + \sqrt{N\left(\frac{\tau-\sqrt{\S}}{2}\right)^2
+ 4N^{3/2} \left(\frac{1}{\sqrt{1-N \tau}} -1 \right)}\,,
\end{align*}
where the last inequality follows from \eqref{estGF_n} and \eqref{estFQFtilda}. Thanks to the assumption~\ref{ineq:mainTheorem2} we can apply Proposition~\ref{prop:muQcone} and Proposition~\ref{LemmaConeWasserstein} to conclude that $OR^{-1}F_{n \sharp} \nu_n$ has an orthogonal cone structure with the parameters as stated in Theorem~\ref{PropMuMun}, and hence so does $F_{n \sharp}\nu_n$. 
 \end{proof}


 {\footnotesize \textbf{Acknowledgements}
   The authors would like to thank Ulrike von Luxburg
   for pointing them to the paper \cite{BinYu} which was the
   starting point of this work. 
   Franca Hoffmann was partially supported by Caltech's
von Karman postdoctoral instructorship. Bamdad Hosseini is supported in part by a postdoctoral fellowship granted by Natural Sciences and Engineering Research Council of Canada.}


\bibliography{spectral_clustering}
\bibliographystyle{siam}

\appendix

\section{Supplementary results}

\subsection{Near-orthogonal vectors}

\begin{lemma}\label{BH-finite-dim-orht-basis}
Let $V$ be a vector space of dimension $N$ and let $\langle \cdot, \cdot \rangle$ be an inner product on $V$ with associated norm $\|\cdot\|$. 
Suppose that $v_1, \dots, v_N $ are linearly independent unit vectors in $V$ such that
\[  \lvert \langle  v_j , v_l  \rangle \rvert  \leq \delta, \quad \forall  j \not =l \]
for $\delta>0$ satisfying
\[  N \delta < 1.\]
Then, there exists an orthonormal basis for $V$, $\{ \tilde{v}_1, \dots \tilde{v}_N \}$, such that for every $j=1,\dots, N$
\begin{equation}
\| v_j - \tilde{v}_j \| \leq \tilde{\phi}(N,\delta),
\label{CloseOrthonormalIneq-BH}
\end{equation}
where 
$$
\tilde{\phi}(N, \delta) := \sqrt{N}\left[\frac{1}{\sqrt{1 - N\delta}} - 1\right].
$$
\end{lemma}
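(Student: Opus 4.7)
The plan is to construct the orthonormal basis $\{\tilde v_j\}$ via symmetric (L\"owdin) orthogonalization, using the inverse square root of the Gram matrix of $\{v_1,\dots,v_N\}$. First I would introduce the Gram matrix $G\in \R^{N\times N}$ defined by $G_{ij}:=\langle v_i,v_j\rangle$. The hypotheses give $G=I+E$, where $E$ has vanishing diagonal and off-diagonal entries of modulus at most $\delta$. A Gershgorin-type argument then yields $\lVert E\rVert_{\mathrm{op}}\leq (N-1)\delta<N\delta<1$, so $G$ is symmetric positive definite with spectrum contained in $[1-N\delta,\,1+N\delta]$. In particular, $G^{-1/2}$ is a well-defined symmetric positive definite matrix.

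I would then define $\tilde v_j:=\sum_{i=1}^{N}(G^{-1/2})_{ij}\,v_i$ and verify orthonormality by the direct computation
\[
\langle \tilde v_j,\tilde v_k\rangle=\sum_{i,l}(G^{-1/2})_{ij}(G^{-1/2})_{lk}G_{il}=\bigl(G^{-1/2}\,G\,G^{-1/2}\bigr)_{jk}=\delta_{jk}.
\]
Since these are $N$ orthonormal vectors in an $N$-dimensional space, they automatically form a basis of $V$.

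To estimate $\lVert v_j-\tilde v_j\rVert$, I would expand $v_j-\tilde v_j=\sum_i(I-G^{-1/2})_{ij}\,v_i$, apply the triangle inequality (using $\lVert v_i\rVert=1$) and then the Cauchy--Schwarz inequality to pass from an $\ell^1$ to an $\ell^2$ norm, obtaining
\[
\lVert v_j-\tilde v_j\rVert\leq \sum_{i=1}^N\bigl|(I-G^{-1/2})_{ij}\bigr|\leq \sqrt{N}\,\bigl\lVert (I-G^{-1/2})e_j\bigr\rVert_2\leq \sqrt{N}\,\lVert I-G^{-1/2}\rVert_{\mathrm{op}},
\]
where $e_j$ denotes the $j$-th standard basis vector of $\R^N$. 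Since the spectrum of $G$ lies in $[1-N\delta,1+N\delta]$, functional calculus yields
\[
\lVert I-G^{-1/2}\rVert_{\mathrm{op}}\leq \max\bigl\{1-(1+N\delta)^{-1/2},\,(1-N\delta)^{-1/2}-1\bigr\}=(1-N\delta)^{-1/2}-1,
\]
producing exactly the bound $\tilde\phi(N,\delta)=\sqrt{N}\,[(1-N\delta)^{-1/2}-1]$.

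The main obstacle is not computational but in selecting the right construction: symmetric orthogonalization via $G^{-1/2}$ is what produces a bound that is symmetric in $j$ (as opposed to, say, Gram--Schmidt, which would propagate errors unevenly across indices). Everything else is routine, and the hypothesis $N\delta<1$ serves the dual purpose of guaranteeing that $G$ is invertible (so $G^{-1/2}$ exists) and of making the spectral expression $(1-N\delta)^{-1/2}-1$ finite.
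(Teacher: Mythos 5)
Your proof is correct and uses essentially the same construction as the paper: both orthonormalize via the symmetric (L\"owdin) transformation $\tilde{\mathbf{V}} = \mathbf{V}(\mathbf{V}^T\mathbf{V})^{-1/2}$, i.e.\ $\tilde v_j = \sum_i (G^{-1/2})_{ij} v_i$. The only difference is in how the final estimate is carried out: you bound $\lVert I - G^{-1/2}\rVert_{\mathrm{op}}$ spectrally via Gershgorin and functional calculus, whereas the paper expands $(I+\mathbf{R})^{-1/2}$ in a power series and bounds the entries of $\mathbf{R}^k$ termwise; both routes yield exactly $\tilde\phi(N,\delta)=\sqrt{N}\left[(1-N\delta)^{-1/2}-1\right]$.
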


\begin{proof}
Without loss of generality assume $V$ is the Euclidean space $\mathbb{R}^N$
with the usual inner product.
Define the square matrix 
$$
\mathbf{V}=
\begin{bmatrix}
v_1 | v_2| \cdots | v_N
\end{bmatrix}.
$$
and the residual matrix $\mathbf{R} = \mathbf{V}^T\mathbf{V} - I$. Since $v_1,...v_N$ are linearly independent, the matrix $\mathbf{V}^T \mathbf{V}$ is invertible, and
the nearest orthonormal approximation to $\mathbf{V}$ in the Frobeneous norm is the matrix 
\cite{higham-nearness-matrix}
$$
\tilde{\mathbf{V}} = \mathbf{V}(\mathbf{V}^T \mathbf{V})^{-\frac{1}{2}} = \mathbf{V}( I + \mathbf{R})^{-\frac{1}{2}}.
$$ 
Using the series expansion of the square root gives 
$$
\mathbf{V} - \tilde{\mathbf{V}} = \mathbf{V}\left(\frac{1}{2} \mathbf{R} - \frac{3}{8} \mathbf{R}^2 + 
\frac{5}{16} \mathbf{R}^3 - \cdots\right) 
$$
Note that $|\mathbf{R}_{ij}| \le \delta$ following our assumptions on the $v_j$. Furthermore, 
a straightforward calculation shows that $|(\mathbf{R}^k)_{ij}| \le N^{k-1}\delta^k$.  Thus, 
$$
\begin{aligned}
|(\mathbf{V} - \tilde{\mathbf{V}})_{ij}| &\le 
N\left(\sup_{ij} |\mathbf{V}_{ij}|\right)\left(\frac{1}{2} \delta  + \frac{3}{8} N \delta^2 + \frac{5}{16} N^2 \delta^3 + \cdots \right)   \\
& = \left(\sup_{ij} |\mathbf{V}_{ij}|\right) \left[ (1 - N\delta)^{-\frac{1}{2}} - 1 \right]\\
& \le \left[ (1 - N\delta)^{-\frac{1}{2}} - 1 \right] = \frac{\tilde{\phi}(N,\delta)}{\sqrt{N}}.
\end{aligned}
$$
It follows that
$$
\|v_j-\tilde{v}_j\|=\left(\sum_{k=1}^N |(\mathbf{V} - \tilde{\mathbf{V}})_{ij}|^2\right)^{1/2} 
\leq \left(\sum_{k=1}^N \frac{\tilde{\phi}(N,\delta)^2}{N}\right)^{1/2}
= \tilde{\phi}(N,\delta)\,,
$$
which proves inequality \eqref{CloseOrthonormalIneq-BH}.
\end{proof}

\begin{remark}
	\label{App:linearIndep}
With the same notation as in the above lemma, suppose that the unit vectors $v_1, \dots, v_N$ satisfy 
\[ |\langle v_i , v_j \rangle| \leq \delta, \quad \forall i\not = j,\]
where $\delta$ satisfies the slightly stronger condition 
\[2N\delta <1 .\]
We claim that in that case the vectors are linearly independent. Indeed, for the sake of contradiction suppose that they are not. Then we can find numbers $a_1, \dots, a_N$ not all equal to zero, for which
\[ 0 = \sum_{i=1}^N a_i v_i .\]
Normalizing the $a_i$ we can further assume that
\[ \sum_{i=1}^N a_i^2 =1.\]
It follows from Jensen's inequality that
\[ 0 = \left\lVert \sum_{i=1}^N a_i v_i \right\rVert^2= \sum_{i=1}^N a_i^2 + 2 \sum_{i=1}^N \sum_{j \not = i} a_i a_j \langle  v_i , v_j \rangle \geq 1 - 2\delta \left( \sum_{i=1}^N|a_i| \right)^2  \geq 1 - 2 N \delta, \]
 which would contradict the hypothesis on $\delta$.
\end{remark}

\subsection{Cheeger's inequality}\label{sec:Cheeger}

Cheeger's inequality in manifolds was introduced in \cite{Cheeger}. For the convenience of the reader here we present a  proof of Cheeger's inequality for sufficiently
smooth and bounded functions using techniques that are developed in spectral geometry and
spectral graph theory literatures. We omit some technical details and highlight
the specific structure of the operator $\Delta_\rho$ which allows us to deduce the inequality.
As we will see below not every normalization of  Laplacian operator will produce a similar result.
We also note that our result can be generalized to functions in $L^2(\M, \rho)$ via a density argument.
 
 \begin{theorem}\label{thm:Cheeger}
 Let  $u \in L^2(\M, \rho)$  sufficiently smooth and bounded with
\[ \int_\M u \rho dx =0. \]
Then
\[ \frac{\langle \Delta_\rho u , u \rangle_\rho }{\langle u,u \rangle_\rho}  \geq \frac{1}{4} h(\M,\rho)^2,\] 
where
\[ h(\M, \rho):= \min_{A \subseteq \M} \frac{\int_{\partial A \cap \M} \rho(x)dS(x) }{ \min \left\{ \int_{A}\rho dx , \int_{\M \setminus A} \rho dx \right\}} . \]
 \end{theorem}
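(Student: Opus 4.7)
The plan is to follow the classical Cheeger-inequality strategy adapted to the weighted setting, exploiting the fact that the Rayleigh quotient for $\Delta_\rho$ has both numerator $\int_\M|\nabla u|^2 \rho\,dx$ and denominator $\int_\M u^2\rho\,dx$ weighted by the same density $\rho$ appearing in the definition of $h(\M,\rho)$. This matching is the structural feature that makes the argument work, and I would emphasize it up front.

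First, I would reduce to a nonnegative function that is supported on a set of $\rho$-measure at most $1/2$. Since $\rho\,dx$ is a probability measure on $\M$, I pick a $\rho$-median $c$ of $u$, i.e.\ a constant with $\int_{\{u>c\}}\rho\,dx\le 1/2$ and $\int_{\{u<c\}}\rho\,dx\le 1/2$, and split $u-c = v_+ - v_-$ into its positive and negative parts. Since $\nabla v_+$ and $\nabla v_-$ have disjoint supports,
\[
\int_\M|\nabla u|^2\rho\,dx \;\ge\; \int_\M|\nabla v_+|^2\rho\,dx+\int_\M|\nabla v_-|^2\rho\,dx,
\]
so it suffices to prove the Cheeger bound separately for $w=v_+$ and $w=v_-$, each of which is a nonnegative, smooth, bounded function whose support has $\rho$-mass at most $1/2$.

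Next, for such a $w$ I would set $g:=w^2$ and apply the weighted co-area formula together with a Cauchy--Schwarz step. On the one hand, by Cauchy--Schwarz,
\[
\int_\M|\nabla g|\,\rho\,dx \;=\; 2\int_\M w\,|\nabla w|\,\rho\,dx \;\le\; 2\left(\int_\M w^2\rho\,dx\right)^{1/2}\left(\int_\M|\nabla w|^2\rho\,dx\right)^{1/2}.
\]
On the other hand, the weighted co-area formula (applied with the ambient reference measure $\rho\,dx$) gives
\[
\int_\M|\nabla g|\,\rho\,dx \;=\; \int_0^{\infty}\!\!\int_{\partial A_t\cap\M}\rho\,dS\,dt,\qquad A_t:=\{g>t\}.
\]
Because $A_t\subseteq\mathrm{supp}(w)$ has $\rho$-mass at most $1/2$, the minimum in the definition of $h(\M,\rho)$ is realized at $A_t$, and so
\[
\int_{\partial A_t\cap\M}\rho\,dS \;\ge\; h(\M,\rho)\int_{A_t}\rho\,dx.
\]
Integrating in $t$ and using the layer-cake identity $\int_0^\infty\!\rho(A_t)\,dt=\int_\M g\,\rho\,dx=\int_\M w^2\rho\,dx$ yields
\[
\int_\M|\nabla g|\,\rho\,dx \;\ge\; h(\M,\rho)\int_\M w^2\rho\,dx,
\]
which, combined with the Cauchy--Schwarz bound, gives $\int_\M|\nabla w|^2\rho\,dx\ge \tfrac{1}{4}h(\M,\rho)^2\int_\M w^2\rho\,dx$.

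Finally, I would sum the bounds over $w=v_+$ and $w=v_-$ and use $\int u\rho\,dx=0$ to conclude. Since $\int_\M(u-c)^2\rho\,dx=\int_\M u^2\rho\,dx+c^2\ge\int_\M u^2\rho\,dx$, the estimate on $v_\pm$ assembles to
\[
\int_\M|\nabla u|^2\rho\,dx \;\ge\; \tfrac{1}{4}h(\M,\rho)^2\left(\int_\M v_+^2\rho\,dx+\int_\M v_-^2\rho\,dx\right) \;\ge\; \tfrac{1}{4}h(\M,\rho)^2\int_\M u^2\rho\,dx,
\]
which is the claim. The main technical obstacle is a careful justification of the weighted co-area formula on a manifold with the density $\rho$ and the rigorous handling of the level sets $\partial A_t\cap\M$ (e.g.\ via Sard's theorem to restrict to regular values and an approximation to go from smooth bounded $u$ to general $L^2(\M,\rho)$ functions). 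The paper signals that these technical points are to be omitted, so the argument above is the substance of the proof.
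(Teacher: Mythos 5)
Your proposal is correct; it is the classical Cheeger argument in the same spirit as the paper's proof, but organized around a different decomposition. The paper works with the single recentred function $z=u-r$ ($r$ the $\rho$-median), applies the coarea formula to $2\rho\,|z|\,|\nabla z|$ over \emph{all} level sets $A_t=\{z\le t\}$, $t\in(m,M)$, proves the exact identity $\int_m^M \min\bigl\{\int_{A_t}\rho\,dx,\int_{\M\setminus A_t}\rho\,dx\bigr\}\,2|t|\,dt=\int_\M z^2\rho\,dx$ (where the median condition is used to decide which term realizes the minimum on either side of $t=0$), and then concludes by an averaging argument that \emph{some} level set has weighted cut ratio at most $2\sqrt{R(z)}$, hence bounds $h(\M,\rho)$. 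You instead split $u-c=v_+-v_-$ and prove the Rayleigh-quotient bound separately for each part by applying the coarea and layer-cake formulas to $w^2$, using that every superlevel set of $v_\pm^2$ has $\rho$-mass at most $1/2$ so that the minimum in $h(\M,\rho)$ is always realized by $\int_{A_t}\rho\,dx$; you then reassemble via $\int_\M(u-c)^2\rho\,dx\ge\int_\M u^2\rho\,dx$. The two routes are logically equivalent and both hinge on the same structural point you correctly emphasize (numerator and denominator of the Rayleigh quotient weighted by the same $\rho$, which is what makes the Cauchy--Schwarz step produce $\sqrt{R}$); the paper's version has the minor virtue of explicitly exhibiting a near-optimal cut $A_t$, while yours avoids juggling the two cases $t<0$ and $t>0$ inside one integral and yields the inequality for the quotient directly. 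Both treatments defer the same technical points (regularity of level sets, validity of the weighted coarea formula), which the paper explicitly licenses.
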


\begin{proof}[Proof]
We show that for any non-constant function $u \in L^2(\M, \rho)$ sufficiently smooth and bounded with
\[ \int_\M u \rho dx =0, \]
we can find a set $A \subseteq \M$ such that
\[ R(u):= \frac{\langle \Delta_\rho u , u \rangle_\rho }{\langle u,u \rangle_\rho} =  \frac{\int_\M |\nabla u |^2 \rho dx }{\int_\M u^2 \rho dx}  \geq\frac{1}{4}\left( \frac{\int_{\partial A \cap \M } \rho dS(x) }{\min \left\{ \int_{A} \rho dx,  \int_{\M \setminus A}  \rho dx \right\} } \right)^{2} \geq \frac{1}{4} (h(\M,\rho))^2.\]  
To see this, we start by letting $r$ be the smallest number for which 
\[ \int_\M \mathds{1}_{\{ u\leq r \}}\rho dx =1/2, \]
and we define
\[ z(x):= u(x) - r, \quad x \in \M. \]
In other words, $r$ is the median of $u(x)$.  
Let $m, M$ be the infimum and supremum of $z$. Then $m<0$ and $M>0$. Notice that since $z$ and $u$ differ only by a constant we have
\[   \langle  \Delta_{\rho} u, u \rangle_\rho =  \langle  \Delta_{\rho} z, z \rangle_\rho,  \]
and from the fact that $\int_{\M}u \rho dx=0$, we also have
\[ \langle z ,z \rangle_\rho \geq \langle u, u\rangle_\rho    \]
In particular,
\[  R(z) \leq R(u). \]

Now, the coarea formula states that for any $L^1(dx)$ function $g: \M \rightarrow \R$ we have
\[  \int_{\M} g(x) |\nabla z(x)|dx = \int_{-\infty}^\infty \left( \int_{\partial A_t \cap \M} g(x) dS(x)  \right) dt,    \]
where $A_t$ is the level set:
\[ A_t := \{ x \in \M \: : \: z(x) \leq t   \}.  \]
Taking $g$ to be the function
\[ g(x) = 2\rho(x)|z(x)|, \quad x \in \M, \]
we deduce that
\begin{align}
\int_{\M} 2|z(x)| |\nabla z(x)| \rho(x)dx 
&= \int_{-\infty}^\infty \left( \int_{\partial A_t \cap \M} \rho(x) dS(x)  \right)2 |t| dt\notag\\
&=\int_{m}^{M} \left( \int_{\partial A_t \cap \M} \rho(x) dS(x)  \right)2 |t| dt  ,   
\label{aux:Cheeger}
\end{align}
where in the first equality we have used the fact that at the boundary $\partial A_t$ the function $z$ is equal to $t$. On the other hand,
\begin{align*}
\begin{split}
&\int_{m}^M  \min \left\{  \int_{A_t} \rho dx, \int_{\M \setminus A_t} \rho  dx \right\}   2|t|dt \\
&\qquad= \int_m^0 \left(\int_{A_t} \rho dx\right) 2 |t| dt + \int_0^M  \left(\int_{\M \setminus A_t} \rho dx \right) 2 |t| dt
\\
&\qquad=-\int_m^0  2\left(\int_{A_t} \rho dx\right)t  dt + \int_0^M  \left(\int_{\M \setminus A_t} \rho dx\right) 2 t dt
\\
&\qquad= \int_{\M} \int_m^0 \mathds{1}_{\{ z(x) \le t\}}(x) (-2t) dt    \rho dx +
\int_{\M} \int_0^M \mathds{1}_{\{ z(x) > t\}}(x) (2t) dt    \rho dx
\\
&\qquad= \int_{\M} \int_{z(x)}^0 \mathds{1}_{\{ z(x) \le 0\}}(x) (-2t) dt    \rho dx +
\int_{\M} \int_0^{z(x)} \mathds{1}_{\{ z(x) > 0\}}(x) (2t) dt    \rho dx
\\
&\qquad=\int_{\M} \mathds{1}_{\{z(x) \leq 0\}} z^2 \rho dx + \int_{\M} \mathds{1}_{\{z(x) \geq 0\}} z^2 \rho dx = \int_{\M}z^2 \rho dx,
\end{split}
\end{align*}
where in the third equality we used the Fubini-Tonelli theorem to switch the order of
integrals. From the previous identity and \eqref{aux:Cheeger} we notice that 
\[  \int_{m}^{M} \left( \int_{\partial A_t \cap \M} \rho(x) dS(x)  \right)2 |t| dt   = K(z)   \int_{m}^M  \min \left\{  \int_{A_t} \rho dx, \int_{\M \setminus A_t} \rho  dx \right\}   2|t|dt   \]
where 
\[ K(z) := \frac{2 \int_{\M}|z(x)||\nabla z(x)|\rho(x)dx  }{\langle z, z\rangle_\rho}.\]
Now, Cauchy Schwartz-inequality shows that
\[ K(z) \leq 2 \sqrt {R(z)}.  \]
This is the point where it is important to have both numerator and denominator in the Raleigh quotient $R$ to be weighted by $\rho$. We notice that with a different weighting we would have not gotten the square root of the Raleigh quotient in this last step.

It follows that
\[ 0 \leq \int_m^M2|t| \left(2 \sqrt{R(z)}  \min \left\{  \int_{A_t} \rho dx, \int_{\M \setminus A_t} \rho  dx \right\}   - \int_{\partial A_t \cap \M} \rho(x) dS(x)  \right)  dt ,  \]
from where we can see that there must exist some $t\in (m, M)$ for which 
\[ \frac{\int_{\partial A_t \cap \M} \rho dS(x)}{\min \left\{  \int_{A_t} \rho dx, \int_{\M \setminus A_t} \rho  dx \right\} } \leq 2 \sqrt{R(z)}.  \]
\nc

 \end{proof}

\end{document}